\documentclass[11pt,letterpaper]{amsart}

\usepackage{amscd,amsmath,amssymb,euscript}
\usepackage[frame,cmtip,curve,arrow,matrix,line,graph]{xy}
\usepackage{tikz-cd}
\usepackage{adjustbox}
\usetikzlibrary{matrix}

\usepackage{hyperref}
\usepackage{stmaryrd}

\title[Generators for Hall algebras of quivers with potential]{Generators for K-theoretic Hall algebras of quivers with potential}
\author{Tudor P\u adurariu}
\address{Department of Mathematics, Columbia University, 
2990 Broadway, New York, NY 10027}
\email{tgp2109@columbia.edu}

\date{}
%\openup1.03\jot \setlength{\topmargin}{0.1\topmargin}
%\setlength{\oddsidemargin}{0.1\oddsidemargin}
%\setlength{\evensidemargin}{0.1\evensidemargin}
%\setlength{\textheight}{1.02\textheight}
%\setlength{\textwidth}{1.2\textwidth}

%\setlength{\oddsidemargin}{0cm} \setlength{\evensidemargin}{0cm}
%\setlength{\textwidth}{16cm} %\setlength{\textheight}{23cm}
%\setlength{\topmargin}{-0.3cm}

%\setlength{\parskip}{.7\baselineskip}

\newtheorem{thm}{Theorem}[section]
\newtheorem{cor}[thm]{Corollary}
\newtheorem{prop}[thm]{Proposition}

\theoremstyle{definition}

\newtheorem{thm*}[thm]{Theorem$^*$}
\newtheorem{remark}[thm]{Remark}

\newcommand{\comment}[1]{}

% symbols
\renewcommand{\leq}{\leqslant}
\renewcommand{\geq}{\geqslant}

%\newcommand{\ha}{\small{\frac{1}{2}}}

% letters

\newcommand{\OO}{\mathcal O}

\newcommand{\WW}{\mathbb{W}}

\newcommand{\X}{\mathcal{X}}

% operator names

%\newcommand{\Treg}{\operatorname{QT_{reg}}}

\newcommand{\C}{\mathbb{C}}

\newcommand{\ee}{\underline{e}}
\newcommand{\dd}{\underline{d}}

% clever things

%\newcommand{\RHreg}{\operatorname{H}_{\circ}}

%=======================================================================

\begin{document}
\maketitle

\begin{abstract}
K-theoretic Hall algebras (KHAs) of quivers with potential $(Q,W)$ are a generalization of preprojective KHAs of quivers, which are conjecturally positive parts of the Okounkov--Smironov quantum affine algebras. In particular, preprojective KHAs are expected to satisfy a Poincaré-Birkhoff-Witt theorem. We construct semi-orthogonal decompositions of categorical Hall algebras using techniques developed by Halpern-Leistner, Ballard--Favero--Katzarkov, and \v{S}penko--Van den Bergh. For a quotient of $\text{KHA}(Q,W)_{\mathbb{Q}}$,
we refine these decompositions and prove a PBW-type theorem for it. The spaces of generators of $\text{KHA}(Q,0)_{\mathbb{Q}}$ are given by (a version of) intersection K-theory of coarse moduli spaces of representations of $Q$.   
\end{abstract}

\section{Introduction}

\subsection{Hall algebras of quivers with potential}\label{haq} Let $Q$ be a quiver with vertex set $I$, edge set $E$, and let $W$ be a potential of $Q$. For a dimension vector $d\in\mathbb{N}^I$, denote by $\X(d)=R(d)/G(d)$ the moduli stack of representations of $Q$ of dimension $d$. The potential induces a regular function
\begin{equation}\label{regfct}
\text{Tr}\,W:\X(d)\to\mathbb{A}^1_{\mathbb{C}}.
\end{equation}
Kontsevich--Soibelman \cite{ks} defined a Cohomological Hall algebra of $(Q,W)$, inspired by the Donaldson-Thomas theory of the Jacobi algebra $\text{Jac}\,(Q,W)$, on the cohomology of vanishing cycles of \eqref{regfct}: 
$$\text{CoHA}\,(Q, W):=\bigoplus_{d\in \mathbb{N}^I}H^{\cdot}\left(\X(d), \varphi_{\text{Tr}\,W}\mathbb{Q}\right),$$ where the multiplication $m=p_*q^*$ is defined using the maps \begin{equation}\label{pq}
    \X(d)\times \X(e)\xleftarrow{q} \X(d,e)\xrightarrow{p} \X(d+e)
    \end{equation} from the stack $\X(d,e)$ parametrizing pairs of representations $A\subset B$ with $A$ of dimension $d$ and $B$ of dimension $d+e$. In \cite[Section 8.1]{ks}, Kontsevich--Soibelman proposed a categorification of the CoHA using categories of matrix factorizations of
    the regular function $\text{Tr}\,W$:
  $$\text{HA}(Q,W):=\bigoplus_{d\in\mathbb{N}^I} \text{MF}\left(\X(d), W\right).$$  
  The category $\text{MF}\left(\X(d), W\right)$ is equivalent to the category of singularities $D_{\text{sg}}\left(\X(d)_0\right)$ of the zero fiber of $\text{Tr}\,W$.
The category $\text{HA}(Q,W)$ is monoidal with respect to $m$, see \cite[Theorem 1.1]{P1}, and it is called the categorical Hall algebra (HA) of $(Q,W)$. Its $K_0$ is called the K-theoretic Hall algebra (KHA) of $(Q,W)$. For any edge $e\in E$, let $\mathbb{C}^*$ act on $R(d)$ by scaling the linear map corresponding to $e$. Let $\left(\mathbb{C}^*\right)^E$ be the product of these multiplicative groups.
One can consider a graded version of Hall algebras $\text{HA}^{\text{gr}}(Q,W)$ for any group $\mathbb{C}^*\subset \left(\mathbb{C}^*\right)^E$ scaling $\text{Tr}\,W$
with weight $2$.
%for any $d\in\mathbb{N}^I$. 
%There are also versions of the Hall algebra for a given stability conditions of $Q$, but we assume that the stability condition is trivial in this paper.

%One can also consider versions of the Hall algebras for a non-trivial stability condition of $Q$ or equivariant versions of the Hall algebras.  
\subsection{Preprojective Hall algebra}\label{pha}
For a quiver $Q$ and a dimension vector $d\in\mathbb{N}^I$, let $\mathfrak{P}(d)$ be the moduli stack of dimension $d$ representations of the preprojective algebra of $Q$. 
Varagnolo--Vasserot \cite{VV} studied the categorical preprojective Hall algebra of $Q$:
\[\text{HA}(Q):=\bigoplus_{d\in\mathbb{N}^I}D^b\left(\mathfrak{P}(d)\right).\] 
Its $K_0$ is called the preprojective KHA of $Q$. A $\mathbb{C}^*$-equivariant version is conjectured \cite[Conjecture 1.2]{P1} to be the positive part of the Okounkov--Smirnov quantum affine algebra $U_q\left(\widehat{\mathfrak{g}_Q}\right)$ \cite{os}. There is also a preprojective CoHA of $Q$ \cite{YZ}, \cite{SV}, and a $\mathbb{C}^*$-equivariant version is conjectured \cite{d} to be the positive part of the Maulik--Okounkov Yangian $Y_{\text{MO}}\left(\mathfrak{g}_Q\right)$ \cite{mo}.

To any quiver $Q$ one associates a quiver with potential $\left(\widetilde{Q}, \widetilde{W}\right)$ with a natural grading $\mathbb{C}^*\subset \left(\mathbb{C}^*\right)^{\widetilde{E}}$ such that there is an equivalence of underlying categories
\begin{equation}\label{Isik}
\text{HA}^{\text{gr}}\left(\widetilde{Q}, \widetilde{W}\right)\cong \text{HA}(Q),
\end{equation}
see \cite[Subsection 3.2.3]{P1} for a comparison of the multiplications of the two Hall algebras.

\subsection{The PBW theorem for CoHAs}
Davison--Meinhardt \cite[Theorem C]{dm} proved a 
Poincaré-Birkhoff-Witt (PBW) theorem beyond the case of $\left(\widetilde{Q}, \widetilde{W}\right)$, namely for all pairs $(Q,W)$ where $Q$ is symmetric. 
Let $X(d)$ be the coarse space of $\X(d)$. The coarse space map \[\pi: \X(d)\to X(d)\] induces an increasing (perverse) filtration
\[P^{\leq i}\subset H^{\cdot}\left(\X(d),\varphi_{\text{Tr}\,W}\mathbb{Q}\right)\]
starting in degree $1$.
Denote by $\text{gr}^\cdot\, \text{CoHA}\,(Q, W)$ the associated graded of the $\text{CoHA}\,(Q, W)$ with respect to the filtration $P^{\leq i}$.
 The Davison--Meinhardt PBW theorem for CoHAs says there is an isomorphism of algebras:
\begin{equation}\label{PBWDM}
    \text{gr}^\cdot\, \text{CoHA}\,(Q, W)\cong \text{Sym}\left(P^{\leq 1}\otimes H^{\cdot}\left(B\C^*\right)\right),\end{equation}
    where the right hand side is a super-symmetric algebra and
the generator of $H^{\cdot}\left(B\C^*\right)$ is in perverse degree $2$. As a corollary of \eqref{PBWDM}, there is a natural Lie algebra structure (called the BPS Lie algebra) on $P^{\leq 1}$. There is an isomorphism of $\mathbb{N}^I$-graded vector spaces:
\begin{equation}\label{pl1}
    P^{\leq 1}\cong \bigoplus_{d\in\mathbb{N}^{I}} H^{\cdot}\left(X(d), \varphi_{\text{Tr}\,W}IC_{X(d)}\right).\end{equation}
To prove the PBW theorem, Davison--Meinhardt first formulate a version of \eqref{PBWDM} for the analogous algebra of constructible sheaves on the coarse spaces $X(d)$. The map $\pi:\X(d)\to X(d)$ can be approximated by proper maps and thus $\pi_*$ commutes with vanishing cycles. Therefore the sheaf version of \eqref{PBWDM} for a general potential follows formally from the one for $W=0$. The sheaf version of \eqref{PBWDM} for $W=0$ follows from an explicit description of the summands of $R\pi_*IC_{\X(d)}$ due to
Meinhardt--Reineke \cite[Proposition 4.3 and Theorem 4.6]{mr}. 
%They deduce the general statement by applying the vanishing cycle functor, which commutes with pushforward along the map $\pi:\X^{ss}(d)\to X^{ss}(d)$ because it can be approximated by proper maps. 

\subsection{Semi-orthogonal decompositions for categorical HA}\label{subsec14}

We are interested in proving versions of \eqref{PBWDM} for categorical and K-theoretic Hall algebras. The main tools used in proving \eqref{PBWDM}, intersection complexes and the BBDG decomposition theorem \cite{BBDG}, do not admit obvious generalizations  
to the categorical or the K-theoretic setting. 

To prove a version of \eqref{PBWDM} for categorical HAs, we replace the use of the BBDG decomposition theorem by semi-orthogonal decompositions inspired by work of Halpern-Leistner \cite{hl}, Ballard--Favero--Katzarkov \cite{bfk}, and \v{S}penko--Van den Bergh \cite{sp}, \cite{sp2}. 

For $w\in \mathbb{Z}$, denote by $\text{MF}(\X(d), W)_w$ the category of matrix factorizations on which $z\cdot\text{Id}\subset G(d)$ acts with weight $w$.
%In Subsections \ref{M} and \ref{r}, we define categories $\mathbb{M}(d)_{w}$ of $D^b\left(\X(d)\right)_w$ and sets $S^d_w$ of ordered partitions of $(d,w)$, respectively. 
We denote by $\dd=(d_1,\cdots, d_k)$ a partition of $d$, by $\X(\dd)$ the stack of representations $0=R_0\subset R_1\subset\cdots\subset R_k$ such that $R_i/R_{i-1}$ has dimension $d_i$ for $1\leq i\leq k$, and by $p_{\dd}$ and $q_{\dd}$ the natural maps
\[\times_{i=1}^k \X(d_i)\xleftarrow{q_{\dd}}\X(\dd)\xrightarrow{p_{\dd}}\X(d).\]
Let $M(d)$ be the weight lattice of a maximal torus of $G(d)$, let $M(d)_\mathbb{R}:=M(d)\otimes_\mathbb{Z}\mathbb{R}$, let $\mathfrak{S}_d$ be the Weyl group of $G(d)$, and let $\delta\in M(d)^{\mathfrak{S}_d}_{\mathbb{R}}$.
We define a set $S^d_w(\delta)$ of partitions $(d_i,w_i)_{i=1}^k$ of $(d,w)$ in Subsection \ref{admissible}. We define categories of generators $\mathbb{M}(d; \delta)_w\subset D^b(\X(d))_w$ after we state Theorem \ref{them1}. For an ordered partition $A=(d_i,w_i)_{i=1}^k$ in $S_w^d(\delta)$, let  \begin{equation}\label{mfa}
    \mathbb{M}_A(\delta):=\otimes_{i=1}^k \mathbb{M}(d_i; \delta_{Ai})_{w_i},
    \end{equation}
    where the weights $\delta_{Ai}\in M(d_i)^{\mathfrak{S}_{d_i}}_{\mathbb{R}}$ are determined by $A$ and $\delta$. Denote by $\mathbb{S}_A(\delta):=\text{MF}\left(\mathbb{M}_A(\delta), \oplus_{i=1}^k W_{d_i}\right)$ the category of matrix factorizations with factors in $\mathbb{M}_A(\delta)$.
%For $A=(d_i,w_i)_{i=1}^k$, let $\mathbb{M}_A:=\otimes_{i=1}^k \mathbb{M}(d_i)_{w_i}$.

\begin{thm}\label{them1}
Assume $Q$ is symmetric. Let $\delta\in M(d)^{\mathfrak{S}_d}_{\mathbb{R}}$.
 There is a semi-orthogonal decomposition
\[\text{MF}\left(\X(d), W\right)_w=\Big\langle %\boxtimes_{i\in I}
p_{\dd*}q_{\dd}^*\left(\mathbb{S}_A(\delta)\right)\Big\rangle,\]
where the right hand side is after all ordered partitions $A=(d_i,w_i)_{i=1}^k$ in $S_w^d(\delta)$. The functor $p_{\dd*}q_{\dd}^*$ is fully faithful on the categories $\mathbb{S}_A(\delta)$. The order of the categories is as in Subsection \ref{compadm}, see also Subsection \ref{orderM}. There are analogous semi-orthogonal decompositions for $\text{MF}^{\text{gr}}$.
%The order of categories in the semi-orthogonal decomposition respects the order from Subsection \ref{orderM}.
\end{thm}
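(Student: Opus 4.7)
The plan is to derive the decomposition from the Kempf--Ness/magic-window semi-orthogonal decompositions of Halpern-Leistner and \v{S}penko--Van den Bergh in their matrix-factorization incarnation due to Ballard--Favero--Katzarkov, applied iteratively on flag-type stacks $\X(\dd)$. The parameter $\delta \in M(d)^{\mathfrak{S}_d}_{\mathbb{R}}$ serves as a stability on $R(d)$ and induces a Kempf--Ness/Harder--Narasimhan stratification whose strata are indexed by ordered partitions $\dd = (d_1,\ldots,d_k)$ with slopes forcing instability; each stratum deformation-retracts onto (an open substack of) $\X(\dd)$, and the inclusion is realised by the natural map $p_{\dd} : \X(\dd) \to \X(d)$. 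The function $\mathrm{Tr}\,W$ is invariant under the one-parameter subgroups cutting out the KN stratification, so it descends coherently to each stratum and to the corresponding flag stack.

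As a first step, I would prove a single-level SOD
\[\mathrm{MF}(\X(d), W)_w = \bigl\langle\, p_{\dd*}q_{\dd}^*\,\mathrm{MF}\bigl(\mathbb{N}_{\dd}(\delta),\, \oplus_{i=1}^k W_{d_i}\bigr),\ \mathbb{M}(d;\delta)_w\,\bigr\rangle,\]
in which the middle piece is the magic window $\mathbb{M}(d;\delta)_w$ and the outer pieces are indexed by partitions $(d_i,w_i)$ unstable for $\delta$. This follows from the Ballard--Favero--Katzarkov window theorem applied to the $G(d)$-action on $R(d)$. On each stratum, the Halpern-Leistner identification realises the outer piece as $p_{\dd*}q_{\dd}^*$ of a weight-admissibility subcategory $\mathbb{N}_{\dd}(\delta) \subset D^b(\X(\dd))$; symmetry of $Q$ is essential here because it forces the attracting and repelling bundles of each KN cocharacter to have equal ranks, making the admissibility cutoffs match the slope conditions that define $S_w^d(\delta)$.

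Next, I would iterate. On each factor $\X(d_i)$ appearing in a non-trivial $\dd$, the induced stability $\delta_{Ai}$ supplies another single-level SOD, peeling off its magic window $\mathbb{M}(d_i;\delta_{Ai})_{w_i}$ from further unstable contributions. Since the dimension vectors shrink at each iteration, the process terminates, and the surviving pieces assembled across the partition give precisely $\mathbb{M}_A(\delta) = \otimes_{i=1}^k \mathbb{M}(d_i;\delta_{Ai})_{w_i}$. Compatibility of $p_{\dd*}q_{\dd}^*$ with the iteration is associativity of the induction/restriction diagrams for nested flags, and the order of the SOD claimed in Subsection \ref{compadm} is inherited from the one-level SODs produced at each step.

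The main obstacle will be proving fully faithfulness of $p_{\dd*}q_{\dd}^*$ on $\mathbb{S}_A(\delta)$ together with pairwise semi-orthogonality between summands indexed by different $A$. Both reduce, via adjunction and base change along $q_{\dd}$, to weight computations for the conormal complex of $p_{\dd}$ against the windows defining each $\mathbb{M}(d_i;\delta_{Ai})_{w_i}$. The windows are engineered so these weights fall outside the admissible range in both directions, forcing the required $\mathrm{Ext}$-vanishing; symmetry of $Q$ is crucial because it aligns the positive and negative weight bounds by a common shift. The graded variant for $\mathrm{MF}^{\mathrm{gr}}$ follows identically by working with $\mathbb{C}^*$-equivariant matrix factorizations throughout.
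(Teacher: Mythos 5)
Your proposal correctly identifies the conceptual framework (window categories in the style of Halpern-Leistner, Ballard--Favero--Katzarkov, and \v{S}penko--Van den Bergh), but it mischaracterizes how the semi-orthogonal decomposition is actually obtained, and it glosses over the steps that constitute the real work.

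The central misreading is that $\delta$ ``serves as a stability on $R(d)$'' inducing a Kempf--Ness/Harder--Narasimhan stratification. It does not. The parameter $\delta$ shifts the \v{S}penko--Van den Bergh-style polytope $\tfrac{1}{2}\mathbb{W}$, which is built from the weights of the representation $R(d)$; it is not a GIT linearization, and for $\delta=0$ (a case the theorem covers) there is no semistable locus and no KN stratification to apply BFK or Halpern-Leistner to. Relatedly, the pieces of the SOD are indexed by partitions $(d_i,w_i)_{i=1}^k \in S^d_w(\delta)$ carrying both dimension and weight data determined by the $(r,p)$-invariant and the ``standard form'' \eqref{godown5} of $\chi+\rho+\delta$ over a tree of cocharacters; this is finer combinatorial information than an HN stratification records. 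The map $p_{\dd}$ is also not an open immersion onto a stratum --- it is the proper projection from the flag-type stack --- so the ``deformation-retract'' picture does not match the actual recollement.

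What actually makes the proof go is a direct induction, not a black-box invocation of window theorems. Generation (Proposition \ref{gen0}) is proved by inducting on the $(r,p)$-invariant of $\chi+\rho+\delta$: one writes the standard form, applies the Borel--Weil--Bott spectral sequence (Proposition \ref{bbw}), and uses the key estimate that the BBW correction terms strictly decrease $(r,p)$ (Proposition \ref{rgoesdown}); symmetry of $Q$ enters here to control $\chi-\sigma_I$. Orthogonality and full faithfulness (Proposition \ref{orth}) reduce via adjunction to the weight vanishing of Proposition \ref{zerodc}, again combined with the $(r,p)$-decrease. Your proposal mentions ``weight computations for the conormal complex'' but offers no termination mechanism for the iteration and no account of why the iterated outer pieces land exactly in $S^d_w(\delta)$ with the induced weights $\delta_{Ai}$ of \eqref{deltaai}; this bookkeeping is precisely where the difficulty lies. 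Finally, the paper first establishes the SOD for $D^b(\X(d))_w$ (the $W=0$ case) and then transports it to $\mathrm{MF}(\X(d),W)$ using the compatibility of matrix factorizations with the fully faithful functor $p_{\dd*}q_{\dd}^*$; running the weight-theoretic argument directly in $\mathrm{MF}$, as your proposal implicitly does, would require additional justification.
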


Similarly to the Davison--Meinhardt approach, the semi-orthogonal decomposition for $W$ arbitrary follows from the semi-orthogonal decomposition for $W=0$. It is important that we formulate a categorical statement as it is not clear how to make the reduction from $W$ arbitrary to $W=0$ directly in $K$-theory. The categories $\mathbb{M}(d)_w$ are examples of noncommutative resolutions of singularities of $X(d)$ constructed by \v{S}penko--Van den Bergh \cite{sp}. In general, they are not equivalent to $D^b(\mathcal{Y})$ for $\mathcal{Y}$ a stack. 

We now explain the definition of $\mathbb{M}(d; \delta)_w$.
Denote by $M$ the weight lattice of $G(d)$, by $M_{\mathbb{R}}:=M\otimes_{\mathbb{Z}}\mathbb{R}$, by $\nu$ the sum of simple weights of $G(d)$, by $\rho$ half the sum of positive weights of $G(d)$, and by $\mathcal{W}$ the multiset of weights of $R(d)$. Consider the region \begin{equation}\label{polyto}
    \mathbb{W}:=\left(\text{sum}_{\beta\in \mathcal{W}} [0,\beta]\right)\oplus \mathbb{C}\nu \subset M_{\mathbb{R}},
    \end{equation}
    where the Minkowski sum is over weights $\beta$ in $\mathcal{W}$. Denote by $\partial \mathbb{W}$ the boundary of $\mathbb{W}$.
The full subcategory $\mathbb{M}(d; \delta)$ of $D^b(\X(d))$ is generated by the vector bundles $\mathcal{O}_{\X(d)}\otimes \Gamma(\chi)$ for $\chi$ a dominant weight of $G(d)$ such that
\begin{equation}\label{polyt}
\chi+\rho+\delta\in \frac{1}{2}\mathbb{W}.\end{equation}
The category $\mathbb{M}(d; \delta)_w$ is the subcategory of $\mathbb{M}(d; \delta)$ of complexes on which $z\cdot\text{Id}$ acts with weight $w$.
%Recall the definition of $\mathbb{M}_A$ from \eqref{mfa}. The category $\text{MF}\left(\mathbb{M}_A, \oplus_{i=1}^k W_{d_i}\right)$ is the category of matrix factorizations with factors in $\mathbb{M}_A$. 
%The set $S^d_w$ is defined in Subsection \ref{}; an important ingredient is the $r$-invariant of a weight $\chi$, which measures how far $\chi$ is from satisfying the condition \eqref{polyt}: for example, a weight $\chi$ with\[\chi+\rho\in \frac{1}{2}\partial \mathbb{W}\] has $r$-invariant $\frac{1}{2}$.One can also associate an $r$-invariant to a partition $(d_i, w_i)_{i=1}^k$ of $(d,w)$.

\subsection{Filtrations on KHA}
Inspired by the Davison--Meinhardt PBW theorem \eqref{PBWDM}, we search for filtrations on KHA whose associated graded are $q$-deformed super-symmetric algebras. We replace the perverse filtration $P^{\leq \cdot}$ on $H^{\cdot}\left(\X(d), \varphi_{\text{Tr}\,W}\right)$
with the filtration $F^{\leq \cdot}$ on $K_0(\text{MF}(\X(d),W))$ induced by the semi-orthogonal decompositions from Theorem \ref{them1}. These filtrations depend on 
$\delta_d\in M(d)^{\mathfrak{S}_d}_{\mathbb{R}}$. For generators 
$x_{e,v}\in K_0\left(\mathbb{S}(e;\delta_e)_v\right)$ and $x_{f,u}\in K_0\left(\mathbb{S}(f;\delta_f)_u\right)$, we show that 
\begin{equation}\label{qco}
 x_{e,v}\cdot x_{f,u}=\left(x_{f,u}q^{\gamma(f,e)}\right)\cdot\left(x_{e,v}q^{-\delta(f,e)}\right)
\end{equation}
in the associated graded $\text{gr}^{\cdot} K_0(\text{MF}(\X(d),W))$ with respect to the filtration $F^{\leq \cdot}$,
 where the factors $q^{\gamma(f,e)}, q^{-\delta(f,e)}\in K_0^{T(e)\times T(f)}(\text{pt})$ depend only on $e$ and $f$, see Proposition \ref{commu2}, part (c) for a precise categorical analogue.

\subsection{The PBW theorem for KHAs}\label{pbwkhas}
The categories $\mathbb{M}(d; \delta)_w$ may contain complexes generated in smaller dimensions, that is, complexes in the image of $p_{\dd*}q_{\dd}^*$ for $\dd=(d_i)_{i=1}^k$ a partition of $d$ with $k\geq 2$. These partitions are indexed by a set $T^d_w(\delta)$ defined in Subsection \ref{admissible}. 
%If this happens, $(d,w)$ has a non-trivial partition $(d_i, w_i)_{i=1}^k$ of $r$-invariant $\frac{1}{2}$.
For a partition $A=(d_i,w_i)_{i=1}^k$ in $T^d_w(\delta)$, we define a coproduct-type map
\[\Delta_A: K_0\big(\text{MF}\left(\mathbb{M}(d; \delta)_w, W_{d}\right)\big)\to K_0\left(\mathbb{S}_A(\delta)\right).\] 
The inclusion $i_d:\X(d)_0\hookrightarrow \X(d)$ induces an algebra morphism
\begin{equation}\label{algmorp}
    i_*: \text{KHA}(Q,W)\to \text{KHA}(Q,0),
    \end{equation}
see \cite[Proposition 3.6]{P1}. Denote its image by $\text{KHA}(Q,W)'$. It is a $\mathbb{N}^I\times\mathbb{Z}$-graded algebra and denote its $(d,w)\in\mathbb{N}^I\times\mathbb{Z}$-graded part
%of $\text{KHA}(Q,W)'_{\mathbb{Q}}$ 
by $\text{KHA}(Q,W)'_{d,w}$. 
For $d\in \mathbb{N}^I$ and $w\in\mathbb{Z}$,
let $P(d;\delta)_w\subset \text{KHA}(Q,W)'_{d,w}$ be the space of primitive-like elements with respect to $\Delta$, see Subsection \ref{pbwth} for a precise definition. 
%Let $\widetilde{\delta}=(\delta_d)_{d\in\mathbb{N}^I}$, where $\delta_d\in M(d)^{\mathfrak{S}_d}_{\mathbb{R}}$. We do not require compatibility between the various $\delta_d$. Define\[P\left(\widetilde{\delta}\right):= \bigoplus_{(d, w)\in \mathbb{N}^I\times\mathbb{Z}} P_{d,w}(\delta_d).\] %We denote by $\text{gr}^\cdot$ the associated graded with respect to $F^{\leq \cdot}$.
%We make a Künneth assumption on $(Q,W)$, that for every dimension vectors $d, e\in\mathbb{N}^I$, the exterior product is an isomorphism\begin{multline}\label{Kunneth}K_0\left(\text{MF}(\X(d),W_d)\right)_{\mathbb{Q}}\otimes K_0\left(\text{MF}(\X(e),W_e)\right)_\mathbb{Q}}\xrightarrow{\sim} \\K_0\left(\text{MF}(\X(d)\times\X(e),W_d\oplus W_e)\right)_{\mathbb{Q}}.\end{multline}The Künneth assumption is satisfied for pairs $(Q,0)$ and $\left(\widetilde{Q}, \widetilde{W}\right)$ for all quivers $Q$, see \cite[Proposition 3.2]{P2}.If $(Q,W)$ satisfies the Künneth assumption, then the analogues maps are isomorphisms for any graded categories $\text{MF}^{\text{gr}}$ by \cite[Corollary 3.13]{T2}.
%We define spaces of generators $P(d;\delta)_w(\delta_d)\subset K_0\left(\text{MF}(\X(d), W)_w\right)$ in Subsection \ref{pbwth}. 
For $A=(d_i, w_i)_{i=1}^k$ in a set of partitions $U^d_w(\delta)$, define \[P_A(\delta):=\otimes_{i=1}^k P(d_i; \delta_{Ai})_{w_i},\] where the weights $\delta_{Ai}$ are determined by $A$ and $\delta$. Any summand $(d', w')$ of a partition of $(d,w)$ in $S^d_w(\delta)$ has a corresponding Weyl invariant weight $\delta'$. 
Denote by $\widetilde{T^d_w(\delta)}$ the set of two terms partitions in $T^{d'}_{w'}(\delta')$ for summands $(d', w')$ as above.
%of a partition in $S^d_W(\delta)$. 

\begin{thm}\label{them2}
Let $Q$ be a symmetric quiver with potential $W$. Let $d\in\mathbb{N}^I$, $w\in\mathbb{Z}$, and %$\widetilde{\delta}=(\delta_d)_{d\in\mathbb{N}^I}$, where 
    $\delta_d\in M(d)^{\mathfrak{S}_d}_{\mathbb{R}}$. 
    Then $\text{KHA}(Q,W)'_{d,w}$ is generated (as a $\mathbb{Q}$-vector space) by $P_A(\delta)$ for $A\in U^d_w(\delta)$. The relations between elements in $P_A(\delta)$ are generated by 
\begin{equation}\label{relthem2}
    x_{e,v}\cdot x_{f,u}=\left(x_{f,u}q^{\gamma(f,e)}\right)\cdot\left(x_{e,v}q^{-\delta(f,e)}\right)
\end{equation}
for $x_{e,v}\in P(e;\delta_e)_v$ and $x_{f,u}\in P(f; \delta_f)_u$ for a partition $(e,v), (f,u)$ in $\widetilde{T^d_w(\delta)}$, and where the factors $q^{\gamma(f,e)}, q^{-\delta(f,e)}\in K_0^{T(e)\times T(f)}(\text{pt})$ depend only on $e$ and $f$, see Subsection \ref{gammadelta} for their definitions.
\end{thm}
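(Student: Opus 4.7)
The plan is to deduce Theorem \ref{them2} from Theorem \ref{them1} combined with the commutation identity \eqref{qco}. Applying the semi-orthogonal decomposition of Theorem \ref{them1} to $K_0\bigl(\text{MF}(\X(d),W)\bigr)_w$ and passing to the associated graded with respect to the filtration $F^{\leq\cdot}$ yields
\[
\text{gr}^\cdot K_0\bigl(\text{MF}(\X(d),W)\bigr)_w \;\cong\; \bigoplus_{A\in S^d_w(\delta)} K_0\bigl(\mathbb{S}_A(\delta)\bigr).
\]
Pushing along $i_*$ from \eqref{algmorp} transports this to a decomposition of $\text{gr}^\cdot \text{KHA}(Q,W)'_{d,w}$.

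\smallskip
For the generation statement, I would proceed by induction on the dimension vector $d$ under the natural partial order on $\mathbb{N}^I$. By construction, a class in $K_0\bigl(\text{MF}(\mathbb{M}(d;\delta)_w, W_d)\bigr)$ is primitive-like exactly when all coproduct-type maps $\Delta_A$ with $A\in T^d_w(\delta)$ vanish on it. Given an arbitrary class, pick the finest partition $A\in T^d_w(\delta)$ on which $\Delta_A$ is non-trivial and use the semi-orthogonal splitting to subtract a class of the form $p_{\dd*}q_{\dd}^*(\xi)$ with matching coproduct; this is where Theorem \ref{them1} does the real work. Iteration leaves a primitive-like remainder, and the subtracted pieces are products of classes at strictly smaller dimensions to which the inductive hypothesis applies. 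This expresses every element of $\text{KHA}(Q,W)'_{d,w}$ as a sum of products of primitive-like generators, indexed by $A\in U^d_w(\delta)$.

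\smallskip
The $q$-commutation relation \eqref{relthem2} is then the direct translation to $K_0$ of Proposition \ref{commu2}(c) together with \eqref{qco}: for $x_{e,v}\in P(e;\delta_e)_v$ and $x_{f,u}\in P(f;\delta_f)_u$ with $((e,v),(f,u))\in\widetilde{T^d_w(\delta)}$, interchanging the order of multiplication introduces the twist by $q^{\gamma(f,e)}$ and $q^{-\delta(f,e)}$ defined in Subsection \ref{gammadelta}, a priori only modulo terms lying in deeper layers of $F^{\leq\cdot}$. Primitivity of both factors forces those correction terms to vanish, so the identity holds on the nose in $\text{KHA}(Q,W)'$ and not merely in the associated graded.

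\smallskip
The main obstacle is showing these relations are \emph{exhaustive}. My approach is to form the $q$-symmetric algebra
\[
\text{Sym}_q\!\left(\bigoplus_{(d',w')} P(d';\delta')_{w'}\right)
\]
- the ordered tensor algebra on the primitive spaces modulo precisely \eqref{relthem2} - and to construct a surjective algebra map to $\text{KHA}(Q,W)'$ sending a monomial ordered as in Subsection \ref{compadm} to the product of the corresponding generators. Injectivity is checked after passing to the associated graded: by a standard PBW argument for $q$-symmetric algebras, the source has graded components indexed exactly by $U^d_w(\delta)$-partitions of $(d,w)$, while the target has the same graded components by the primitive peeling above. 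A character comparison in $K_0^{T(d)}(\text{pt})$ then forces the surjection to be an isomorphism. The most delicate bookkeeping will be matching the twists $q^{\gamma(f,e)}$, $q^{-\delta(f,e)}$ to the semi-orthogonal order of Subsection \ref{orderM}, since a consistent torus-equivariant convention is essential for the PBW basis to close.
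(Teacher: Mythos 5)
Your overall outline — reduce to $W=0$ via $i_*$, use the filtration induced by Theorem~\ref{them1}, identify primitives as kernels of the $\Delta_A$, and then present the algebra as a $q$-symmetric algebra on the primitives — is the correct shape, and matches the paper's strategy at that level. The paper's actual proof, however, is much shorter than your outline because it leans on three precise inputs that your sketch does not supply and that cannot be waved past: the K\"unneth-type isomorphism \eqref{kunn}, Proposition~\ref{513} (the direct sum decomposition
$P(d;\delta)_w\oplus\bigoplus_{A\in O}\bigl(\bigoplus_{\sigma}P_{\sigma(A)}(\delta)\bigr)^{\mathfrak{S}_{|A|}}\cong K_0(\mathbb{S}(d;\delta)_w)'_{\mathbb{Q}}$), and Proposition~\ref{dcomm}. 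These are proved in the paper, and in \cite{P3}, by explicit shuffle formulas for $m$ and $\Delta$; they are not formal consequences of Theorem~\ref{them1}.

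The two places where your argument has a genuine gap are the following. First, you assert that ``primitivity of both factors forces the correction terms to vanish,'' upgrading the graded relation of Proposition~\ref{commu2}(c) to an exact identity in $\text{KHA}(Q,W)'$. This is not justified: the cone in Proposition~\ref{commu2}(c) lives in $D^b(\X)_{<A}$, and primitivity of $x_{e,v}$ and $x_{f,u}$ (i.e.\ vanishing of $\Delta_B$ for $|B|\geq 2$) says nothing \emph{a priori} about the $K_0$-class of that cone. The paper avoids this issue entirely: Proposition~\ref{dcomm} proves the exact $q$-commutation for \emph{all} elements of $\text{KHA}(Q,W)'$, not just primitives, by a direct shuffle computation in the $W=0$ case, where the symmetry of $Q$ makes the two shuffle products agree on the nose. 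Second, your exhaustiveness argument — building $\text{Sym}_q$ and invoking ``a character comparison in $K_0^{T(d)}(\text{pt})$'' — is where the real work is hidden. The isomorphism in Proposition~\ref{513} is exactly the statement that the ordered products of primitives biject with $K_0(\mathbb{S}(d;\delta)_w)'_{\mathbb{Q}}$, and proving it requires the explicit coproduct computations of \cite[Theorem 5.13]{P3}; a bare character count does not see the interaction between the filtration and the $\mathfrak{S}_{|A|}$-symmetrization. Without a substitute for Proposition~\ref{513} and Proposition~\ref{dcomm}, your proof does not close.
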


%The assumption \eqref{inclus} allows us to use localization techniques to reduce to the case $W=0$. 
It suffices to prove Theorem \ref{them2} for $W=0$. In this case, 
Theorem \ref{them2} follows from Theorem \ref{them1} and computations using shuffle formulas from \cite[Section 5]{P3}. In particular, there is an isomorphism \[P(d;0)_0\cong IK_0(X(d)).\] The right hand side is the intersection $K$-theory space (with rational coefficients) of $X(d)$ introduced in \cite{P3}, where a definition of intersection $K$-theory was proposed for a more general class of Artin stacks with good moduli spaces. The spaces $P(d;0)_0$ can be viewed as twisted by $w$ versions of $IK_0(X(d))$. There is a Chern character map to the space of generators of the CoHA: \[\text{ch}:\bigoplus_{d\in\mathbb{N}^I}P(d;0)_0\to P^{\leq 1}.\]

Using the equivalence \eqref{Isik}, we obtain versions of Theorems \ref{them1} and \ref{them2} for the categorical preprojective Hall algebra $\text{HA}(Q)$.

%\subsection{Intersection $K$-theory} Assume that $W=0$. In analogy to \eqref{pl1}, we expect the spaces $F^{\leq 1}_{d,w}$ to be (twisted by $w$) versions of intersection $K$-theory of $X(d)$. There is an isomorphism \[F^{\leq 1}_{d,0}\cong IK_0(X(d)),\] where the right hand side is the intersection $K$-theory space (with rational coefficients) of $X(d)$ introduced in \cite{P3}, where a definition of intersection $K$-theory was proposed for a more general class of Artin stacks with good moduli spaces. In particular, there is a natural Chern character map $\text{ch}: IK_0(X(d))\to IH^{\cdot}(X(d))$, see loc. cit. for more properties of intersection $K$-theory.

%\subsubsection{} Let $\mathfrak{C}(d)$ be the (derived) stack of commuting matrices of dimension $d$. Then \[\text{HA}(J):=\bigoplus_{d\in\mathbb{N}}D^b\left(\mathfrak{C}(d)\right).\]Consider the subcategory $\mathbb{D}(d)_w$ of $D^b\left(\mathfrak{C}(d)\right)_w$ generated by the sheaf $\mathcal{O}_{\mathfrak{C}(d)}(w)$.The semi-orthogonal decomposition in Theorem \ref{them1} is \[D^b\left(\mathfrak{C}(d)\right)_w=\big\langle \otimes_{i=1}^k\mathbb{D}(d_i)_{w_i}\big\rangle,\] where the right hand side is after all partitions $(d_i, w_i)_{i=1}^k$ of $(d,w)$ with $w_i<w_j$ for every $1\leq i<j\leq k$. We have the following equality in $G_0\left(\mathfrak{C}(d)\right)$: \[\left[\mathcal{O}_{\mathfrak{C}(1)}(w)\right]^{d}=d!\left[\mathcal{O}_{\mathfrak{C}(d)}(w)\right].\]In $\text{KHA}(J)\otimes\mathbb{Q}$, the generators are $F^{\leq 1}=G_0\left(\mathfrak{C}(1)\right)\otimes\mathbb{Q}$. 

\subsection{Past and future work}
Using a theorem of Toda \cite{t} describing moduli stacks of sheaves on surfaces via quivers with potentials, we prove an analogue of Theorem \ref{them1} for categorical Hall algebras of surfaces in \cite{P4}.

There are versions of the KHA for any stability condition of $Q$. Davison--Meinhardt proved the PBW theorem for a generic stability condition on an arbitrary pair $(Q,W)$; when the stability condition is zero, $Q$ has to be symmetric. There are also $T$-equivariant versions of the KHA for a torus $T\subset \left(\mathbb{C}^*\right)^E$ which fixes $W$. It is interesting to prove versions of Theorems \ref{them1} and \ref{them2} in these more general cases. In the case of tripled quivers and certain equivariant actions and for a Hall algebra constructed using nilpotent representations of $Q$, the algebra morphism \eqref{algmorp} is an inclusion \cite[Subsection 2.4.1]{VV}, see also \cite[Remark 4.7]{SV} for the analogous statement in cohomology, and thus an equivariant version of Theorem \ref{them2} would imply a PBW theorem for the full preprojective KHA. 

It is also interesting to study the dependence on $\delta$ of the spaces of generators $\mathbb{M}(d;\delta)_w$ and $P(d; \delta)_w$. We plan to return to these problems in future work.

\subsection{Structure of the paper}
In Section \ref{s2}, we introduce notations and recall the definitions of categories of singularities and matrix factorizations. In Section \ref{s23}, we prove preliminary results about weight spaces of $G(d)$ and partitions of $(d,w)$ for $d\in\mathbb{N}^I$ and $w\in\mathbb{Z}$.
In Section \ref{s3}, we prove Theorem \ref{them1}, we discuss a categorical version of the $q$-deformed commutator, and prove a categorical analogue of \eqref{qco}. In Section \ref{s4}, we construct the coproduct-type map $\Delta$ and prove Theorem \ref{them2}.

\subsection{Acknowledgements} I would like to thank my PhD advisor Davesh Maulik for suggesting the problem discussed in the present paper and for his constant help and encouragement throughout the project. I would like to thank Ben Davison, Pavel Etingof, Daniel Halpern-Leistner, Andrei Negu\c{t}, Andrei Okounkov, Yukinobu Toda, and Eric Vasserot for useful conversations about the project. I thank the referee for many useful comments.

%be the tripled quiver of $J$. Then $\widetilde{Q}$ has three loops $\ell_1, \ell_2, \ell_3$ and the potential is $\widetilde{W}=\ell_1\ell_2\ell_3-\ell_1\ell_3\ell_2$. Let $\mathbb{C}^*$ scale the linear map corresponding to $\ell_3$ with weight $2$ and consider its corresponding category $\text{MF}^{\text{gr}}$. Theorems \ref{} and \ref{} 

%We define the categories $\mathbb{M}(d)$ in Subsection \ref{defMM}. They are subcategories of $\oM(d)$ with certain boundary conditions explained in Subsection \ref{defMM}. In the zero potential case, the category $\overline{\mathbb{N}}(d)\subset D^b(\X(d))$ is defined as the full subcategory generated by the sheaves $V(\chi)\otimes\OO_{\X(d)}$ for $\chi$ such that:$$\chi+\rho\in\frac{1}{2}\bigoplus_{\beta\text{ wt of }R(d)}[0,\beta]\subset M_{\mathbb{R}},$$ where $\rho$ is half the sum of positive roots of $G(d)$ and where the right hand side is the Minkowski sum after all weights $\beta$ of the $G(d)$-representation $R(d)$. In the case of a general potential $W,$ consider the inclusion of the central fiber $i:\X(d)_0\to \X(d)$, and define $\oM(d)\subset D_{sg}(\X(d)_0)$ as the subcategory $$\oM(d)=\text{Coh}'(\X(d)_0)/\text{Perf}'(\X(d)_0),$$ where $\text{Coh}'(\X(d)_0)\subset \text{Coh}(\X(d)_0)$ is the subcategory of sheaves $F$ such that $i_*F\in\overline{\mathbb{N}}$, and $\text{Perf}'(\X(d)_0)\subset \text{Coh}'(\X(d)_0)$ is the subcategoryof perfect complexes.\\

\section{Notations and preliminaries}\label{s2}

\subsection{Table}
We list the most frequent notations used in the paper:

\begin{figure}
	\centering
\scalebox{0.7}{
	\begin{tabular}{|l|l|l|}
		\hline
	Notation & Description & Location defined \\\hline
$\X(d)=R(d)/G(d)$ & moduli of representations of a quiver & Subsection \ref{subsec211}
\\ \hline
$Q^o$ & quiver with one vertex and no loops &
Subsection \ref{subsec211}
\\ \hline
$T(d)\subset G(d)$ & maximal torus & Subsection \ref{subsec212}
\\ \hline
$M, (M_\mathbb{R}, M^+$ etc.) & weight space (real weights, dominant weights etc.) &
Subsection \ref{subsec212}
\\ \hline
$N$ & coweight lattice &
Subsection \ref{subsec212}
\\ \hline
$\beta^i_j$ & weights of the standard representation & 
Subsection \ref{subsec212}
\\ \hline
$\rho_d$ & half the sum of positive roots of $G(d)$ &
Subsection \ref{subsec212}
\\ \hline
$1_d$ & diagonal cocharacter & Subsection \ref{subsec212}
\\ \hline
$\mathfrak{S}_d$ & Weyl group of $G(d)$ & Subsection \ref{subsec212}
\\ \hline
$\nu_d, \tau_d$ & Weyl invariant weights & Subsection \ref{subsec212}
\\ \hline
$\mathcal{W}$ & set of weights associated to $R(d)$ &
Subsection \ref{subsec212}
\\ \hline
$SG(d)$ & determinant one matrices &
Subsection \ref{subsec212}
\\ \hline
$p_\lambda, q_\lambda$ & maps used to define the Hall product & Subsection \ref{nm}
\\ \hline
$n_\lambda$ & width of the magic categories &
Subsection \ref{nm}
\\ \hline
$\mu\succeq \lambda$  & comparison of cocharacters &
Subsection \ref{subseccompa}
\\ \hline
$\ee\geq\dd$ & comparison of partitions &
Subsection \ref{compa}
\\ \hline
$\mathcal{T}$ & tree used to define paths of partitions &
Subsection \ref{tree}
\\ \hline
$\mathbb{W}, \mathbb{W}_c$ & polytopes used to define quasi-BPS categories
& Subsection \ref{polytope}
\\ \hline
$\mathbb{W}^o$ & interior of polytope & Subsection \ref{polytope}
\\ \hline
$F_r(\lambda)$  & Face of the polytope $\textbf{W}(d)$ & Subsection \ref{polytope}
\\ \hline
$\mathbb{M}(d; \delta)$ & Categories of generators &
Subsection \ref{subsec26}
\\ \hline
$\mathbb{S}(d;\delta)$ & categories of generators for arbitrary potential &
Subsection \ref{subsec26}
\\ \hline
$r$-invariant & quantity used to decompose weights  &
Subsection \ref{subsec311}
\\ \hline
$p$-invariant & quantity used to decompose weights  &
Subsection \ref{subsec311}
\\ \hline
 $F_r(\lambda)^{\mathrm{int}}$ & interior of a face of the polytope $\mathbb{W}$ &  Proposition \ref{prop}
\\ \hline
$A_\chi$ & partition associated to $\chi$ & Subsection \ref{admissible}
\\ \hline
$S^d_w(\delta)$ & set of partitions used in SOD &
Subsection \ref{admissible}
\\ \hline
$\chi_A$ & weight associated to a partition $A$ &
Subsection \ref{admissible}
\\ \hline
$T^d_w(\delta)$ & set of partitions
&Subsection \ref{subsec332}
\\ \hline
$U^d_w(\delta)$ & set of partitions
&Subsection \ref{subsec332}
\\ \hline
$O$ & set used in the order of SODs
&Subsection \ref{compadm}
\\ \hline
$\mathbb{M}_A(\delta)$, $\mathbb{S}_A(\delta)$ & Hall products of categories of generators
& Subsection \ref{subsec41}
\\ \hline
$B_{d,w}$  & set of $(r,p)$-invariants
& Subsection \ref{subsec431}
\\ \hline
$D^b(\X)_{\leq A}$ & subcategory of $D^b(\X)$ &
Subsection \ref{subsec432}
\\ \hline
$\mathcal{W}_{e,f}$ & sets of weights
&Subsection \ref{gammadelta}
\\ \hline
$L_{e,f}$, $N_{e,f}$, $\rho_{e,f}$ & weights
&Subsection \ref{gammadelta}
\\ \hline
$q^{\gamma(e,f)}$, $q^{-\delta(e,f)}$ & equivariant monomials 
&Subsection \ref{gammadelta}
\\ \hline
$w_{e,f}$ & element of the Weyl group
& Subsection \ref{gammadelta}
\\ \hline
$\Delta_A, \Delta_{AB}$ & coproduct-like maps
& Subsection \ref{subsec511}
\\ \hline
$K_0(\mathbb{S}_A(\delta))'$ & quotient of $K_0(\mathbb{S}_A(\delta))$
& Subsection \ref{subsec511}
\\ \hline
$\widetilde{m\boxtimes m}$ & twisted Hall product
& Subsection \ref{subsec541}
\\ \hline
$\textbf{S}$ & set of partitions of a dimension vector
& Subsection \ref{subsec541}
\\ \hline
$P(d;\delta)_w$ & space of primitive elements
& Subsection \ref{pbwth}
\\ \hline
$P_A(\delta)$ & space of Hall products of primitive elements
& Subsection \ref{pbwth}
\\ \hline
	\end{tabular}
}
	\vspace{.5cm}
	\caption{Notation introduced in the paper}
	\label{table:notation}
\end{figure}

\subsection{Notations}

All stacks and schemes considered are over $\mathbb{C}$. For a stack $\X$, denote by $\text{Coh}\,(\X)$ the abelian category of sheaves on $\X$, by $D^b(\X)$ the derived category of bounded complexes of coherent sheaves on $\X$, and by $\text{Perf}\,(\X)$ the category of perfect complexes on $\X$. The categories considered are dg and we denote by $\otimes$ the product of dg categories \cite[Subsections 2.2 and 2.3]{K}.

The functors considered in the paper are derived unless otherwise stated. 
For a morphism $f$, denote by $\mathbb{L}_f$ its cotangent complex. For $\X$ a stack, denote by $\mathbb{L}_{\X}$ the cotangent complex of $\X\to\text{Spec}\,\C$.

We denote by $K_0$ the Grothendieck group of a triangulated category. For $\X$ a stack, denote by $K_0(\X):=K_0\left(\text{Perf}(\X)\right)$ and by $G_0(\X):=K_0\left(D^b(\X)\right)$.
For $M$ a $\mathbb{Z}$-module, denote by $M_{\mathbb{Q}}:=M\otimes_{\mathbb{Z}}\mathbb{Q}$.

\subsubsection{}\label{subsec211}
Let $Q=(I,E)$ be a symmetric quiver with source and target maps $s,t:E\to I$. Let $W$ be a potential. For $d\in\mathbb{N}^I$, let \[\X(d):=R(d)/G(d)\] be the stack of representations of $Q$ of dimension $d$. For $d,e\in\mathbb{N}^I$, let \[\chi(d,e):=\sum_{i\in I}d_ie_i-\sum_{a\in E}d_{s(a)}e_{t(a)}.\] The quiver $Q$ is symmetric, so $\chi(d,e)=\chi(e,d)$.
Let \begin{equation}\label{genK}
    K_0^{T(d)}(\text{pt})=\mathbb{Z}\left[q_{ij}^{\pm 1}\Big|\,i\in I, 1\leq j\leq d_i\right].\end{equation}
We denote by $Q^o$ the quiver with one vertex and no loops.

We may use the notation $W_d$ when we want to emphasize the dimension for the regular function \eqref{regfct}, but we usually write $W$. 
We assume that \eqref{regfct} has $0$ as the only critical value. We denote by $\X(d)_0$ the (derived) zero fiber of \eqref{regfct}.

We will use categories of matrix factorizations $\text{MF}$ or categories of graded matrix factorizations $\text{MF}^{\text{gr}}$, where the grading is for any group $\mathbb{C}^*\subset \left(\mathbb{C}^*\right)^E$ scaling $\text{Tr}\,W_d$ with weight $2$ for any $d\in\mathbb{N}^I$.
These categories $\text{MF}^{\text{gr}}$ depend on which $\mathbb{C}^*$ as above we choose, but they have the same $K_0$ as $\text{MF}$ by \cite[Corollary 3.13]{T2}.

%We say that a representation $R$ of dimension $d$ of $Q$ is stable if $R$ is not an extension of representations of strictly smaller dimension of $Q$.

\subsubsection{}\label{subsec212} Let $d\in\mathbb{N}^I$.
Fix maximal torus and Borel subgroups $T(d)\subset B(d)\subset G(d)$. We use the convention that the weights of the Lie algebra of $B(d)$ are negative; it determines a dominant chamber of weights of $G(d)$. Denote by $M$ the weight space of $G(d)$, let $M_{\mathbb{R}}:=M\otimes_{\mathbb{Z}}\mathbb{R}$, and let $M^+\subset M$ and $M^+_{\mathbb{R}}\subset M_{\mathbb{R}}$ be the dominant chambers. When we want to emphasize the dimension vector, we write $M(d)$ etc. Denote by $N$ the coweight lattice of $G(d)$ and by $N_{\mathbb{R}}:=N\otimes_{\mathbb{Z}}\mathbb{R}$. Let $\langle\,,\,\rangle$ be the natural pairing between $N_{\mathbb{R}}$ and $M_{\mathbb{R}}$.

Denote by $\beta^i_j$ the weights of the standard representation of $T(d)$ for $i\in I$ and $1\leq j\leq d_i$, where $d=(d^i)_{i\in I}\in \mathbb{N}^I$, and by
$\rho_d$ half the sum of positive roots of $G(d)$. We denote by $1_d:=z\cdot\text{Id}$ the diagonal cocharacter of $G(d)$. 
Consider the real weights 
\begin{align*}
    \nu_d&:=\sum_{\substack{i\in I\\ j\leq d_i}} \beta^i_j,\\
    \tau_d&:=\frac{\nu_d}{\langle 1_d, \nu_d\rangle}.
\end{align*}
%Let $\tau^i_d$ be a weight of $GL(d_i)$ such that\[\tau_d=\sum_{i\in I}\tau^i_d.\]
Denote by $\mathcal{W}$ the multiset of weights of $R(d)$ (counted with multiplicities).
Let $\mathfrak{S}_d$ be the Weyl group of $G(d)$ and let $\chi$ be a weight. Denote by $w\chi$ the standard Weyl action and by $w*\psi:=w(\psi+\rho)-\rho$ the shifted Weyl group action.
For a weight $\chi$ in $M$, let $\chi^+$ be the dominant weight in the shifted Weyl orbit of $\chi$ if there is such a weight, or zero otherwise.
%let $V(\psi)$ be the representation with highest weight $\chi^+$ in the shifted Weyl orbit of $\chi$ if there is such a weight, and zero otherwise. 
For $\chi\in M(d)^+$, let $\Gamma_{G(d)}(\chi)$ be the representation of $G(d)$ of highest weight $\chi$; when there is no danger of confusion, we drop the subscript.

Denote by $SG(d):=\text{ker}\left(\text{det}: G(d)\to\mathbb{C}^*\right)$.

\subsection{Cocharacters and the multiplication map}

\subsubsection{}\label{weight}
The action of $1_d$ on $R(d)$ is trivial. Let $D^b(\X(d))_w$ be the category of complexes on which $1_d$ acts with weight $w$. We have an orthogonal decomposition \[D^b(\X(d))\cong\bigoplus_{w\in\mathbb{Z}} D^b(\X(d))_w.\] 
%More generally, for a stack $\X$ with a trivial action of a torus $T$, there is a decomposition \[D^b(\X)\cong\bigoplus_{\chi\in X(T)}D^b(\X)_\chi,\]where $X(T)$ is the character lattice of $T$. For $\chi\in X(T)$,denote the projection functor by \[\beta_\chi:D^b(\X)\to D^b(\X)_\chi.\]
%There are analogous definitions and decompositions for categories of quasi-coherent sheaves, for $\text{MF}^{\text{gr}}$, and for $\text{MF}^{\text{gr}}_{\text{qcoh}}$.

\subsubsection{}\label{nm}
For a cocharacter $\lambda:\C^*\to SG(d)$, consider the maps of fixed and attracting loci
\begin{equation}\label{e}
\X(d)^\lambda \xleftarrow{q_\lambda}\X(d)^{\lambda\geq 0}\xrightarrow{p_\lambda}\X(d).\end{equation}
We say that two cocharacters $\lambda$ and $\lambda'$ are equivalent and write $\lambda\sim\lambda'$ if $\lambda$ and $\lambda'$ have the same fixed and attracting stacks as above. 

For $\lambda$ a cocharacter of $SG(d)$, we introduce the weights \begin{align*}
L^{\lambda>0}&:=\det \mathbb{L}_{\X(d)}^{\lambda>0}\big|_{\X(d)^\lambda},\\  
N^{\lambda>0}&:=\det \mathbb{L}_{R(d)}^{\lambda>0}\big|_{R(d)^\lambda},
\end{align*} and their analogues for $\lambda\geq 0$, $\lambda<0$, and $\lambda\leq 0$.

For a cocharacter $\lambda$ of $SG(d)$, let $n_\lambda=\big\langle \lambda, L^{\lambda> 0}\big\rangle.$

\subsubsection{}\label{paco} Let $d\in \mathbb{N}^I$. We call $\dd:=(d_i)_{i=1}^k$ a partition of $d$ if $d_i\in\mathbb{N}^I$ are all non-zero and $\sum_{i=1}^k d_i=d$. We define similarly partitions of $(d,w)\in\mathbb{N}^I\times\mathbb{Z}$.
For a cocharacter $\lambda$ of $SG(d)$, there is an associated partition $(d_i)_{i=1}^k$ such that
\[\X(d)^{\lambda\geq 0}\cong \X(\dd)\xrightarrow{q} \X(d)^\lambda\cong\times_{i=1}^k\X(d_i),\]
where recall the stack $\X(\underline{d})$ from Subsection \ref{subsec14}.
 Define the length $\ell(\lambda):=k$. %We denote the corresponding Levi group by \[L(\dd):=\times_{i=1}^k G(d_1).\]

Equivalence classes of antidominant cocharacters are in bijection with ordered partitions $(d_i)_{i=1}^k$ of $d$.
For an ordered partition $(d_i)_{i=1}^k$ of $d$, fix a corresponding antidominant cocharacter $\lambda_{\dd}$ of $SG(d)$ which induces the maps
\[\X(d)^\lambda\cong\times_{i=1}^k\X(d_i)
\xleftarrow{q_\lambda}\X(d)^{\lambda\geq 0}\cong \X(\dd)
\xrightarrow{p_\lambda}\X.\] 
The multiplication in the Hall algebra is induced by the functor $p_{\lambda*}q_\lambda^*$. We may drop the subscript $\lambda$ in the functors $p_*$ and $q^*$ when the cocharacter $\lambda$ is clear.

\subsubsection{}\label{id} Let $(d_i)_{i=1}^k$ be a partition of $d$. There is an identification \[\bigoplus_{i=1}^k M(d_i)\cong M(d),\] where the simple roots $\beta^i_j$ in $M(d_1)$ correspond to the first $d_1$ simple roots $\beta^i_j$ of $d$ etc.

\subsubsection{}\label{subseccompa} For cocharacters $\lambda,\mu:\C^*\to SG(d)$, we write $\mu\geq \lambda$ 
%\textit{$\lambda$ is a refinement of $\mu$} 
if for every weight $\beta$ in $M$ with $\langle \lambda, \beta \rangle> 0$, we have that $\langle \mu, \beta\rangle > 0$. %We write $\lambda\geq \mu$.

\subsection{Partitions}

\subsubsection{}\label{compa}
Let $\underline{e}=(e_i)_{i=1}^l$ and $\dd=(d_i)_{i=1}^k$ be two partitions of $d\in \mathbb{N}^I$. We write $\ee\geq\dd$
%that \textit{$\underline{e}$ is a refinement of $\underline{d}$} 
if there exist integers \[a_0=0< a_1<\cdots<a_{k-1}\leq a_k=l\] such that for any $0\leq j\leq k-1$, we have
\[\sum_{a_{j}+1}^{a_{j+1}} e_i=d_{j+1}.\]
%We write $\ee\geq\dd$. 
There is a similarly defined order on pairs $(d,w)\in\mathbb{N}^I\times\mathbb{Z}$.

If $\lambda$ and $\mu$ are cocharacters with associated partitions $\dd$ and $\underline{e}$ such that $\mu\geq \lambda$, then $\ee\geq\dd$.

%\subsubsection{} For characters $\lambda,\mu:\C^*\to G(d)$, we say $\lambda$ is a refinement of $\mu$ if for every weight $\beta$ in $M$ with $\langle \mu, \beta \rangle> 0$, we have that $\langle \lambda, \beta\rangle > 0$. 

%\subsubsection{Subpartitions} A partition $\dd=(d_1,\cdots,d_k)$ is a subpartition of $\ee=(e_1,\cdots,e_l)$ if there exists $i$ such that$$(d_1,\cdots,d_k)=(e_{i+1},\cdots,e_{i+k})$$

\subsubsection{}\label{tree}

We define 
$\mathcal{T}$ to be the unique (oriented) tree such that: 
\begin{enumerate}
\item each vertex is indexed by a partition $(d_1, \ldots, d_k)$ of 
	some $d \in \mathbb{N}^I$, 
 \item for each $d \in \mathbb{N}^I$, there is a unique vertex 
	indexed by the partition $(d)$ of size one, 
  \item if $\bullet$ is a vertex indexed by $(d_1, \ldots, d_k)$ 
	and $d_m=(e_1, \ldots, e_s)$ is a partition of $d_m$ for some $1\leq m \leq k$, then there is a unique vertex $\bullet'$ indexed by 
	$(d_1, \ldots, d_{m-1}, e_1, \ldots, e_s, d_{m+1}, \ldots, d_k)$ and
	with an edge from $\bullet$ to $\bullet'$, and 
 \item all edges in $\mathcal{T}$ are as in (3).
	\end{enumerate}
Note that  
 each partition $(d_1, \ldots, d_k)$ of 
	some $d \in \mathbb{N}^I$ 
 gives an index of some (not necessary unique) vertex.  
A subtree $T \subset \mathcal{T}$ is called a \textit{path of partitions} 
if it is connected, contains a vertex indexed by $(d)$ for some 
$d \in \mathbb{N}^I$ and 
a unique end vertex $\bullet$. 
The partition $(d_1, \ldots, d_k)$ at the end vertex $\bullet$
is called the associated partition of $T$. 
We define the Levi group associated to $T$ to be 
\begin{align*}
	L(T):=\times_{i=1}^k G(d_i). 
	\end{align*}
 Note that each edge $\ell\in \mathcal{T}$ corresponds to a partition of some $\mathbb{N}^I$: if $\ell$ connects $(d_1, \ldots, d_k)$ and $(d_1, \ldots, d_{m-1}, e_1, \ldots, e_s, d_{m+1}, \ldots, d_k)$ as in (3), then $\ell$ corresponds to the partition $(e_1,\ldots, e_s)$ of $d_m$ and thus there is an associated cocharacter $\lambda_\ell$ as in Subsection \ref{paco}.

\subsection{Regions in weight spaces}
\subsubsection{}\label{polytope}  
The polytope $\mathbb{W}$ is defined as follows:
\[\mathbb{W}:=\left(\text{sum}_{\beta\in \mathcal{W}} [0,\beta]\right)\oplus \mathbb{R}\tau_d \subset M_{\mathbb{R}},
\]
    where the Minkowski sum is over weights $\beta\in\mathcal{W}$.
Denote by $\mathbb{W}_c\subset \mathbb{W}$ the hyperplane where the coefficient of $\tau_d$ is $c$. Note that 
\[\mathbb{W}=\mathbb{W}_0\oplus \mathbb{R}\tau_d.\]
Let $\mathbb{W}^o:=\mathbb{W}\setminus\partial \mathbb{W}.$

Let $\lambda$ be a cocharacter of $SG(d)$ and let $r\geq 0$. Define the hyperplane  $H_r(\lambda)$ as the locus of weights $\psi\in M_{\mathbb{R}}$ such that
\[\langle \lambda, \psi\rangle+
\langle \lambda, rN^{\lambda>0}\rangle=0.\]
%When $r=\frac{1}{2}$, we drop the subscript and write $H(\lambda)$.
The boundary of the polytope $r\mathbb{W}$ is contained in the subspaces $H_r(\lambda)$ for $\lambda$ a cocharacter of $SG(d)$. Let $F_r(\lambda):=H_r(\lambda)\cap r\mathbb{W}$. When $r=\frac{1}{2}$, we drop the subscript and write $F(\lambda)$.

%Let $\mathbb{W}'\subset\mathbb{W}$ be the union of $\mathbb{W}^{\text{int}}$ and the interior of the faces $F(\lambda)^{\text{int}}$ where $\lambda$ is anti-dominant with associated strictly decreasing partition $\dd=(d_1,\cdots,d_k)$. 

\subsubsection{} Consider a partition $(d_i)_{i=1}^k$ of $d\in\mathbb{N}^I$ with associated Levi group $L=\times_{i=1}^k G(d_i)$.
Using the identification in Subsection \ref{id}, define the region
\[\mathbb{W}(L):
=\left(\bigoplus_{1\leq i\leq k}\,\mathbb{W}(d_i)_0\right)\oplus \mathbb{R}\tau_d\subset \bigoplus_{1\leq i\leq k}\,M(d_i)_{\mathbb{R}}\cong M(d)_{\mathbb{R}}.\]  %We also consider the analogue of $\mathbb{W}'$ in this setting: $$\mathbb{W}'(L):=\bigoplus_{1\leq i\leq k}\mathbb{W}'(d_i)\subset\mathbb{W}(L).$$\\

\subsection{Categories of singularities and matrix factorizations}

\subsubsection{} 
Let $Y$ be a (quasi-smooth) scheme with an action of a reductive group $G$. Consider the quotient stack
$\mathcal{Y}=Y/G$.
The category of singularities of $\mathcal{Y}$ is a triangulated category defined as the quotient of triangulated categories
$$D_{\text{sg}}(\mathcal{Y}):=D^b(\mathcal{Y})/\text{Perf}(\mathcal{Y}).$$
If $\mathcal{Y}$ is smooth, then $D_{\text{sg}}(\mathcal{Y})$ is trivial. There is an exact sequence
\begin{equation}\label{es}
    K_0(\mathcal{Y})\to G_0(\mathcal{Y})\to K_0\left(D_{\text{sg}}(\mathcal{Y})\right)\to 0.\end{equation}

\subsubsection{} A reference for the next two subsections is \cite[Section 2.2]{T3}.
Let $\X=X/G$ be a quotient stack with $X$ a smooth affine scheme and consider
a regular function 
\[f:\mathcal{X}\to\mathbb{A}^1_{\mathbb{C}}.\]
Consider the category of matrix factorizations $\text{MF}(\X, f)$. It has objects $(\mathbb{Z}/2\mathbb{Z})\times G$-equivariant factorizations $(P, d_P)$, where $P$ is a $G$-equivariant coherent sheaf, $\langle 1\rangle$ is the twist corresponding to a non-trivial $\mathbb{Z}/2\mathbb{Z}$-character on $X$, and \[d_P: P\to P\langle 1\rangle\] with $d_P\circ d_P=f$. Alternatively, the objects of $\text{MF}(\X, f)$ are tuplets
\[\left(\alpha: F_1\rightleftarrows F_2: \beta\right),\]
where $F_1$ and $G$ are $F_2$-equivariant coherent sheaves, and $\alpha$ and $\beta$ are $G$-equivariant morphisms
such that $\alpha\circ\beta$ and $\beta\circ\alpha$ are multiplication by $f$. Let $\X_0:=0\times_{\mathbb{A}^1_{\mathbb{C}}}\X$. The stack $\X_0$ is quasi-smooth. 
By a theorem of Orlov \cite{o2},  there is an equivalence 
\[D_{\text{sg}}(\X_0)\cong \text{MF}(\X, f).\]
For a triangulated subcategory $\mathcal{A}$ of $D^b(\X)$, define $\text{MF}(\mathcal{A}, f)$ as the full subcategory of $\text{MF}(\mathcal{X}, f)$ with objects pairs $(P, d_P)$ with $P$ in $\mathcal{A}$. 

\subsubsection{}
Assume there exists an extra action of $\mathbb{C}^*$ on $X$ which commutes with the action of $G$ on $X$ and which scales $f$ with weight $2$. Denote by $(1)$ the twist by the character \[\text{pr}_2:G\times\mathbb{C}^*\to\mathbb{C}^*.\]
Consider the category of graded matrix factorizations $\text{MF}^{\text{gr}}(\mathcal{X}, f)$. It has objects pairs $(P, d_P)$ with $P$ an equivariant $G\times\mathbb{C}^*$-sheaf on $X$ and $d_P:P\to P(1)$ a $G\times\mathbb{C}^*$-equivariant morphism. For a triangulated subcategory $\mathcal{B}$ of $D^b_{\mathbb{C}^*}(\X)$, define $\text{MF}^{\text{gr}}(\mathcal{B}, f)$ as the full subcategory of $\text{MF}^{\text{gr}}(\mathcal{X}, f)$ with objects pairs $(P, d_P)$ with $P$ in $\mathcal{B}$. The gradings used in this paper are induced by groups $\mathbb{C}^*$ as described in Subsection \ref{haq}.
For $f$ zero and the trivial $\mathbb{C}^*$-action on $\X$, there is an equivalence
\[\text{MF}^{\text{gr}}(\X,0)\cong D^b(\X),\]
see \cite[Remark 2.3.7]{T3}.

\subsection{Categories of generators}\label{subsec26}

Let $\delta\in M_{\mathbb{R}}^{\mathfrak{S}_d}$.
The category \[\mathbb{M}(d; \delta)\] is generated by the vector bundles $\mathcal{O}_{\X(d)}\otimes \Gamma_{G(d)}(\chi)$ for $\chi$ a dominant weight of $G(d)$ such that
\[\chi+\rho+\delta\in\frac{1}{2}\mathbb{W}.\]
The category $\mathbb{M}(d; \delta)_w\subset \mathbb{M}(d;\delta)$ is the subcategory of complexes of weight $w$ with respect to the diagonal cocharacter. Then there is an orthogonal decomposition:
\[\mathbb{M}(d; \delta)\cong \bigoplus_{w\in\mathbb{Z}}\mathbb{M}(d; \delta)_w\]
The category $\mathbb{M}(d; \delta)$ can be also described as the subcategory of $D^b(\X(d))$ of complexes $\mathcal{F}$ such that 
\[\langle \lambda, \mathcal{F}|_0\rangle
\subset 
\left[-\frac{n_\lambda}{2}-\langle \lambda, \delta\rangle, \frac{n_\lambda}{2}-\langle \lambda, \delta\rangle\right],\]
for all cocharacters $\lambda$ of $SG(d)$, where, by abuse of notation, $\langle \lambda, \mathcal{F}|_0\rangle$ is the set of weights of $\lambda$ on 
$\mathcal{F}|_0$, the restriction of $\mathcal{F}$ to the origin $0$ in $R(d)$, see \cite[Section 2]{hls}.

Let $W$ be a potential for $Q$. Denote by $\mathbb{S}(d; \delta)_w:=\text{MF}\left(\mathbb{M}(d; \delta)_w, W\right)$. For a given grading, denote by $\mathbb{S}^{\text{gr}}(d; \delta)_w:=\text{MF}^{\text{gr}}\left(\mathbb{M}(d; \delta)_w, W\right)$.

\subsection{Semi-orthogonal decomposition}\label{sod4}
\subsubsection{}\label{orderM}
Let $I$ be a set. Assume there is a set $O\subset I\times I$ such that for any $i, j\in I$ we have that $(i,j)\in O$, or $(j,i)\in O$, or both $(i,j)\in O$ and $(j,i)\in O$. An element $o\in I$ is minimal if $I\times o\subset O$. An element $m\in I$ is maximal if $m\times I\subset O$.

Let $\mathbb{T}$ be a triangulated category. We will construct semi-orthogonal decompositions
\[\mathbb{T}=\langle \mathbb{A}_i\rangle,\] 
by subcategories $\mathbb{A}_i$ with $i\in I$
such that for any $i,j\in I$ with $(i, j)\in O$ and objects $\mathcal{A}_i\in\mathbb{A}_i$, $\mathcal{A}_j\in\mathbb{A}_j$, we have that:
\[\text{Ext}^a_{\mathbb{T}}(\mathcal{A}_j,\mathcal{A}_i)=0\] for any $a\in\mathbb{Z}$. 
%Further, for any object $F\in \mathbb{T}$, there exists $b\in I$ such that $F$ is generated by objects in the categories $\mathbb{A}_i$ for $i\leq b$.

If $o$ is a minimal element in $I$, then the inclusion $\mathbb{A}_o\hookrightarrow\mathbb{T}$ admits a right adjoint $\mathbb{T}\to \mathbb{A}_o$. If $m$ is a maximal element in $I$, then the inclusion $\mathbb{A}_m\hookrightarrow\mathbb{T}$ admits a left adjoint $R:\mathbb{T}\to \mathbb{A}_m$.

\section{Admissible sets}\label{s23}

For $Q$ a quiver and $d\in\mathbb{N}^I$, denote by $Q^d$ the subquiver of $Q$ with vertices $i\in I$ with $d_i\neq 0$. Recall that $Q^o$ is the quiver with one vertex and no loops.

\subsection{The $r$-invariant}
\subsubsection{}\label{subsec311}
Assume that $Q=Q^d$ is connected and that $Q$ is not $Q^o$.
Denote by $\mathbb{R}^{\leq 0}:=(-\infty, 0]\subset \mathbb{R}$.
Then 
\[\mathbb{R}\tau_d\oplus\bigoplus_{\beta\in\mathcal{W}}\mathbb{R}^{\leq 0}\beta=M_{\mathbb{R}}.\]
Indeed, $Q$ is symmetric, so it suffices to show that all the weights $\beta^i_a-\beta^j_b$ are in the $\mathbb{R}$-span of $\beta\in \mathcal{W}$ for $i, j\in I$, $1\leq a\leq d_i$, $1\leq b\leq d_j$. This is true as there is a path between $i$ and $j$.

For $\chi$ a weight in $M_{\mathbb{R}}$, define its $r$-invariant to be the smallest nonnegative real number $r$ such that
$\chi\in r\WW.$ 
For $\chi$ a real weight in $M_\mathbb{R}$ such that $\chi\in r\partial\WW$, we have that:
\[r=\text{max}\,\frac{\langle \lambda, \chi\rangle}{\langle \lambda, N^{\lambda>0}\rangle}
=-\text{min}\,\frac{\langle \lambda, \chi\rangle}{\langle \lambda, N^{\lambda>0}\rangle},\]
where the min and max are taken over all $SG(d)$ cocharacters $\lambda$, see \cite[Lemma 2.9]{hls} applied to the $T(d)$-representation $R(d)$, for $\varepsilon=0$, and for the (slighly) larger polytope $\mathbb{W}$ considered in this paper. When $\chi\in M^+_{\mathbb{R}}$, we have that:
\[r=-\text{min}\,\frac{\langle \lambda, \chi\rangle}{\langle \lambda, N^{\lambda>0}\rangle},\] where the min is taken over all $SG(d)$ antidominant cocharacters $\lambda$.

Recall that $\mathcal{W}$ is the set of weights of $R(d)$ counted with multiplicity. 
If $\chi\in M_\mathbb{R}$ has $r(\chi)=r$, then there are coefficents $-r\leq c_{\beta}\leq 0$ for $\beta\in\mathcal{W}$ and $c\in\mathbb{R}$ such that
\begin{equation}\label{chibeta}
    \chi=\sum_{\beta\in\mathcal{W}} c_{\beta}\beta+c\tau_d.
    \end{equation}
%where the coefficients satisfy $-r\leq c_{ij}\leq 0$ and $c\in\mathbb{R}$. 
The $p$-invariant for $\chi$ with $r(\chi)=r$ is defined as the smallest number of coefficients $c_\beta$ for $\beta\in\mathcal{W}$ equal to $r$ in a formula \eqref{chibeta}.

\begin{prop}\label{rem1} (a) Two weights of dimension $d$ that differ by a multiple of $\tau_d$ have the same $(r, p)$-invariant.

(b) Let $\chi$ be a weight, $w\in \mathfrak{S}_d$, and $\delta\in M^{\mathfrak{S}_d}_{\mathbb{R}}$. Then \[(r,p)(\chi+\rho+\delta)=(r,p)(w*\chi+\rho+\delta).\]
\end{prop}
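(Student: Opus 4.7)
The plan is to read both parts off directly from the defining decomposition $\chi = \sum_{\beta \in \mathcal{W}} c_\beta \beta + c\tau_d$ with $-r \leq c_\beta \leq 0$, by exploiting two structural features of $\mathbb{W}$: the direct-sum factor $\mathbb{R}\tau_d$ is unrestricted, and both $\mathcal{W}$ (as a multiset) and $\tau_d$ are $\mathfrak{S}_d$-invariant.

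For part (a), suppose $\chi' = \chi + t\tau_d$ for some $t \in \mathbb{R}$. Any admissible decomposition $\chi = \sum_\beta c_\beta \beta + c\tau_d$ immediately yields $\chi' = \sum_\beta c_\beta \beta + (c+t)\tau_d$ with exactly the same multiset of coefficients $c_\beta$, and conversely. Hence the set of possible tuples $(c_\beta)_{\beta \in \mathcal{W}}$ realizing $\chi$ and $\chi'$ coincides. Since the $r$-invariant is determined by $-\min_\beta c_\beta$ and the $p$-invariant by the minimal count of indices with $c_\beta = r$ among admissible tuples, both invariants agree.

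For part (b), the shifted action satisfies $w*\chi + \rho = w(\chi+\rho)$. Using $\mathfrak{S}_d$-invariance of $\delta$, we obtain the key identity
\[
w*\chi + \rho + \delta = w(\chi+\rho) + w\delta = w(\chi + \rho + \delta),
\]
so it suffices to show that $(r,p)$ is $\mathfrak{S}_d$-invariant on $M_{\mathbb{R}}$. The multiset $\mathcal{W}$ of weights of the $G(d)$-representation $R(d)$ is preserved by $\mathfrak{S}_d$, and $\tau_d = \nu_d/\langle 1_d, \nu_d \rangle$ is Weyl-invariant since $\nu_d = \sum_{i,j} \beta^i_j$ is. Applying $w$ to a decomposition $\psi = \sum_\beta c_\beta \beta + c\tau_d$ and re-indexing by the permutation $\beta \mapsto w\beta$ of $\mathcal{W}$ gives $w\psi = \sum_{\beta'} c_{w^{-1}\beta'} \beta' + c\tau_d$, with the same multiset of coefficients. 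Since both $r$ and $p$ depend only on this multiset, $(r,p)(w\psi) = (r,p)(\psi)$, and combining with the identity above finishes the proof.

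The argument is essentially bookkeeping; there is no serious obstacle. The only subtle point is distinguishing the ordinary Weyl action from the shifted one and verifying that the hypothesis $\delta \in M^{\mathfrak{S}_d}_{\mathbb{R}}$ is exactly what is needed to convert $w*\chi + \rho + \delta$ into something to which the invariance of $\mathbb{W}$ applies.
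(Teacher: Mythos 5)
Your proof is correct and matches the paper's own argument, which simply observes that part (a) is clear, that $\mathbb{W}$ is Weyl invariant, and that $w*\chi+\rho+\delta=w(\chi+\rho+\delta)$ by $\mathfrak{S}_d$-invariance of $\delta$; you have spelled out the same ideas in more detail.
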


\begin{proof}
The proof of $(a)$ is clear. To prove $(b)$, observe that $\WW\subset M_{\mathbb{R}}$ is Weyl invariant and that $w*\chi+\rho+\delta=w(\chi+\rho)-\rho+\rho+\delta=w(\chi+\rho+\delta).$
\end{proof}

The following is a stronger form of \cite[Lemma 3.12]{hls}.

\begin{prop}\label{prop}
Let $\chi\in M_{\mathbb{R}}$ be a real weight with $r(\chi)=r$ and let $\lambda$ be a cocharacter of $SG(d)$ with $\chi\in F_r(\lambda).$
Then there are coefficients $c\in\mathbb{R}$ and 
\[  c_{\beta}= 
     \begin{cases}
       0 \text{ if }\langle \lambda, \beta\rangle<0,\\
       -r \text{ if } \langle \lambda, \beta\rangle>0, \\
       \text{in }[-r,0] \text{ if }
\langle \lambda, \beta\rangle=0  \\ 
     \end{cases}
\] such that
\begin{equation}\label{chibeta2}
    \chi=\sum_{\beta\in\mathcal{W}} c_{\beta}\beta+ c\tau_d.
    \end{equation}
    Conversely, every $\chi\in r\mathbb{W}$ of the form \eqref{chibeta2} is on $F_r(\lambda)$. Thus if $\mu$ is a cocharacter with $F_r(\mu)\subset F_r(\lambda)$, then $\mu\geq \lambda$. Define
    \[F_r(\lambda)^{\text{int}}:=F_r(\lambda)\setminus\bigcup_{\mu>\lambda}F_r(\mu).\]
If
$\chi\in F_r(\lambda)^{\text{int}},$ then we can choose the coefficients $c_\beta\in (-r, 0]$ in \eqref{chibeta2} for $\langle \lambda, \beta\rangle=0$.
\end{prop}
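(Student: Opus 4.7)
The plan is to establish the four assertions of the proposition in order, starting with the characterization of $F_r(\lambda)$ and bootstrapping to the refinement and interior statements. For the forward direction, I would use $\chi \in r\WW$ to write $\chi = \sum_{\beta \in \mathcal{W}} c_\beta \beta + c\tau_d$ with $c_\beta \in [-r,0]$ and $c \in \mathbb{R}$. Since $\lambda$ is a cocharacter of $SG(d) = \ker(\det)$ and $\tau_d$ is proportional to $\nu_d$ (the character $\det$), we have $\langle \lambda, \tau_d\rangle = 0$, so $\langle \lambda, \chi\rangle = \sum_\beta c_\beta \langle \lambda, \beta\rangle$. The condition $\chi \in H_r(\lambda)$ reads $\langle \lambda, \chi\rangle = -r \langle \lambda, N^{\lambda > 0}\rangle = -r \sum_{\langle \lambda, \beta\rangle > 0} \langle \lambda, \beta\rangle$; rearranging yields
\[\sum_{\langle \lambda, \beta\rangle > 0}(c_\beta + r)\langle \lambda, \beta\rangle + \sum_{\langle \lambda, \beta\rangle < 0} c_\beta \langle \lambda, \beta\rangle = 0.\]
Both sums are non-negative term-by-term (using $c_\beta \in [-r,0]$), so every term vanishes, forcing $c_\beta = -r$ when $\langle \lambda, \beta\rangle > 0$ and $c_\beta = 0$ when $\langle \lambda, \beta\rangle < 0$. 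The converse is a direct substitution: the coefficient bounds give $\chi \in r\WW$ and the same computation shows $\chi \in H_r(\lambda)$.

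For the implication $F_r(\mu) \subseteq F_r(\lambda) \Rightarrow \mu \geq \lambda$, I would exploit that $\langle \lambda, \cdot\rangle$ is constant on $F_r(\mu)$, with value the $\lambda$-minimum $-r\langle \lambda, N^{\lambda>0}\rangle$ on $r\WW$. Representing $\chi \in F_r(\mu)$ in $\mu$-form and varying the free coefficient $c'_\beta$ attached to $\beta$ with $\langle \mu, \beta\rangle = 0$, constancy of $\langle \lambda, \chi\rangle$ forces $\langle \lambda, \beta\rangle = 0$, so $\langle \lambda, \beta\rangle \neq 0 \Rightarrow \langle \mu, \beta\rangle \neq 0$. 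Specializing $\chi$ to a vertex-like extremum of $F_r(\mu)$ and comparing the $\mu$-form against the $\lambda$-form supplied by the forward direction, any $\beta_0$ with $\langle \lambda, \beta_0\rangle > 0$ demands $c_{\beta_0} = -r$ in the $\lambda$-form, and at generic points of $F_r(\mu)$ this is compatible only with $\langle \mu, \beta_0\rangle > 0$, yielding $\mu \geq \lambda$.

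For the interior statement, argue by contradiction: suppose $\chi \in F_r(\lambda)^{\text{int}}$ but every expansion forces $c_{\beta_0} = -r$ for some $\beta_0$ with $\langle \lambda, \beta_0\rangle = 0$. A small perturbation of $\lambda$ inside $SG(d)$ produces a cocharacter $\mu > \lambda$ with $\langle \mu, \beta_0\rangle > 0$ and unchanged strict signs on all other weights; the converse direction then places $\chi \in F_r(\mu)$, contradicting $\chi \in F_r(\lambda)^{\text{int}}$. The asymmetric interval $(-r, 0]$ reflects the symmetric-quiver pairing of $\beta$ with $-\beta$ in $\mathcal{W}$: under the relation $\beta + (-\beta) = 0$, a coefficient $c_\beta = 0$ can always be rebalanced against $c_{-\beta}$ to avoid signaling a subface, whereas $c_\beta = -r$ cannot.

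The main obstacle will be the non-uniqueness of the expansion $\chi = \sum c_\beta \beta + c\tau_d$ when the weights in $\mathcal{W}$ are linearly dependent, which makes both the refinement argument and the interior statement delicate. The cleanest way to navigate this is to view $r\WW$ modulo $\mathbb{R}\tau_d$ as a zonotope, whose face lattice is described combinatorially by sign patterns of supporting cocharacters; face inclusion then translates precisely into the refinement condition on the signs of $\mu$ versus $\lambda$ on the weights of $\mathcal{W}$, matching the intended meaning of $\mu \geq \lambda$.
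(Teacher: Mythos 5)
Your proof of the forward implication (and its converse) is correct and is actually a cleaner argument than the paper's. The paper establishes the same coefficient pattern by a perturbation: it notes that moving $\chi$ by a small $\alpha$ with $\langle\lambda,\alpha\rangle>0$ pushes it out of $r\mathbb{W}$, and then argues by contradiction that no $c_\beta$ can avoid its extreme value. You instead substitute the bounded expansion into the defining equation of $H_r(\lambda)$ and observe a sum of term-by-term non-negative quantities equals zero. Both are correct; yours is slightly more direct and avoids the delicate ``$\chi+\alpha$ stays in $r\mathbb{W}$'' assertion (which the paper states but does not actually use). For the implication $F_r(\mu)\subset F_r(\lambda)\Rightarrow\mu\geq\lambda$, the paper is even terser than you: it simply asserts the inclusion of sign cones. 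Your sketch, while not fully spelled out, is in the right spirit and could be completed along the lines you describe.

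The genuine gap is in the interior statement, and it is worth being precise about where it is. You argue: if every expansion of $\chi$ has some $c_{\beta_0}=-r$ with $\langle\lambda,\beta_0\rangle=0$, then a small perturbation $\mu$ of $\lambda$ with $\langle\mu,\beta_0\rangle>0$ ``and unchanged strict signs on all other weights'' places $\chi\in F_r(\mu)$ via the converse direction. Two things are not justified. First, the negation of the conclusion only gives that in each expansion \emph{some} $\beta$ has $c_\beta=-r$; it does not obviously give a single fixed $\beta_0$ working for all expansions (this can be rescued by a convexity argument on the polytope of expansions, but you should say so). Second, and more seriously, a small perturbation of $\lambda$ preserves strict signs on $\beta$ with $\langle\lambda,\beta\rangle\neq 0$, but it does \emph{not} leave the remaining $\beta$ with $\langle\lambda,\beta\rangle=0$ at zero: generically $\langle\mu,\beta\rangle$ becomes nonzero of uncontrolled sign. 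To invoke the converse direction and conclude $\chi\in F_r(\mu)$ you would then need $c_\beta=-r$ or $c_\beta=0$ (according to the sign of $\langle\mu,\beta\rangle$) for \emph{all} such $\beta$, which the expansion you have gives no control over. The paper sidesteps this by passing to the auxiliary weight $\psi:=\chi+rN^{\lambda>0}$ living in the sub-zonotope spanned by $\mathcal{W}^\lambda=\{\beta:\langle\lambda,\beta\rangle=0\}$, and using the $r$-invariant of $\psi$ there — an expansion-independent quantity. If $r(\psi)=r$, the already-proved forward direction (applied inside the Levi $L(\lambda)$) produces an honest cocharacter $\mu$, and extending it to $\mu'>\lambda$ of $SG(d)$ one checks directly that $\chi\in F_r(\mu')$, contradicting interiority; hence $r(\psi)<r$, which is exactly the claim. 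Your perturbation heuristic is pointing toward the same geometry, but without reducing to the sub-zonotope and its $r$-invariant the argument does not close.
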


\begin{proof}
%Let $\lambda$ be a character that gives the $r$-invariant of $\chi$. In particular, $\chi$ is on the boundary of $r\mathbb{W}$ and satisfies:$$\langle \lambda, \chi\rangle=-r\langle \lambda, N^{\lambda>0}\rangle.$$
Let $\alpha\in M_{\mathbb{R}}$ be a small weight with $\langle \lambda, \alpha\rangle>0$. Then $\chi-\alpha$ will not be in $r\mathbb{W}$ and thus it will have $r(\chi-\alpha)>r(\chi)$. Further, $\chi+\alpha$ will still be inside $r\mathbb{W}$, and thus $r(\chi+\alpha)\leq r(\chi)$. In the above expression
$$\chi=\sum_{\beta\in\mathcal{W}} c_{\beta}\beta+ c\tau_d,$$
assume that there exists a weight $\langle \lambda, \beta\rangle>0$ such that its coefficients $c_{\beta}>-r$. But then for small $\varepsilon>0$, we will have that $$\chi-\varepsilon \beta=\sum_{\beta\in\mathcal{W}} c'_{\beta} \beta+c\tau_d$$ with all coefficients $-r\leq c'_{\beta}\leq 0$, so $\chi+\varepsilon \beta\in r\mathbb{W}$, which contradicts the above observation. Thus $c_{\beta}=-r$ for all $\beta\in \mathcal{W}$ with $\langle \lambda, \beta\rangle>0$.
The argument for why $c_{\beta}=0$ when $\langle \lambda, \beta\rangle<0$ is similar. The converse statement is clear.  If $F_r(\mu)\subset F_r(\lambda)$, then $\{\beta|\,\langle \lambda, \beta\rangle>0\}\subset\{\beta|\,\langle \mu, \beta\rangle>0\}$, so
$\mu\geq \lambda$.

Assume next that $\chi\in F_r(\lambda)^{\text{int}}.$
We show that one can choose the coefficients $0\geq c_{\beta}>-r$ for weights $\beta$ such that $\langle \lambda, \beta\rangle=0$.
Let $\psi:=\chi+rN^{\lambda>0}$. Then
$$\psi=\sum_{\beta\in\mathcal{W}^\lambda} c_{\beta} \beta+c\tau_d,$$ where $\mathcal{W}^\lambda$ is the multiset $\{\beta| \langle \lambda, \beta\rangle=0\}$ and $-r\leq c_{\beta}\leq 0$.
If $r(\psi)=r$, then by the above argument there exists an antidominant cocharacter $\mu$ such that
$$\psi=\sum_{\beta\in\mathcal{W}^\lambda} c_{\beta} \beta+c\tau_d,$$ and the coefficients are
\[  c_{\beta}= 
     \begin{cases}
       0 \text{ if }\langle \mu, \beta\rangle<0,\\
       -r \text{ if } \langle \mu, \beta\rangle>0, \\
       \text{in }[-r, 0]\text{ if }
\langle \mu, \beta\rangle=0.  \\ 
     \end{cases}
\]
Say that $\lambda$ has associated Levi group $L(\lambda)\subset G(d)$ and that $\mu$ has associated Levi group $L(\mu)\subset L(\lambda)$. Let $\mu'$ be a cocharacter of $SG(d)$ with associated Levi $L(\mu)\subset G(d)$. 
%If $\lambda$ and $\mu$ are anti-dominant, then $\mu'$ is anti-dominant.
We then have that
$$\langle\mu',\chi\rangle+r\langle\mu',N^{\mu'>0}\rangle=0.$$ 
This means that $\chi\in F_r(\mu')$. We have $\mu'\geq \lambda$ and $\chi\in F_r(\lambda)^{\text{int}}$, so $\lambda=\mu'$, which implies that $r(\psi)<r$. We can thus choose all the coefficients $c_{\beta}$ to be $-r<c_{\beta}\leq 0$ for weights $\beta$ with $\langle \lambda, \beta\rangle=0$.
\end{proof}

%Let $\lambda$ and $\mu$ be two cocharacter of $SG(d)$. As a corollary of Proposition \ref{prop}, we have that $\lambda\leq \mu$ if and only if $F(\mu)\subset F(\lambda)$.For a cocharacter $\lambda$ of $SG(d)$, let %$F_r(\lambda)^{\text{int}}\subset F_r(\lambda)$ the subspace:\[F_r(\lambda)^{\text{int}}=F_r(\lambda)\setminus\bigcup_{\mu>\lambda}F_r(\mu).\]

\begin{prop}\label{subse}
Let $\chi$ be a weight in $M_{\mathbb{R}}$ with $r(\chi)=r\geq\frac{1}{2}$. Assume that there is a set $J\subset\mathcal{W}$ and $\psi$ a sum of weights in $\mathcal{W}\setminus J$ with positive coefficients $<r$ such that
\begin{equation}\label{chisum}
    \chi=-r\sum_{J}\beta-\psi.
\end{equation}
Assume that $\chi\in F_r(\lambda)^{\text{int}}$. Then $\{\beta|\,\langle \lambda, \beta\rangle>0\}\subset J$.
\end{prop}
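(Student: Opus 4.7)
My approach is to pair the given equality $\chi = -r\sum_{J}\beta - \psi$ with the cocharacter $\lambda$ and then exploit the sign constraints on the coefficients. Let me introduce the notation $\mathcal{W}^+ := \{\beta \in \mathcal{W} \mid \langle \lambda, \beta\rangle > 0\}$, and define $\mathcal{W}^0, \mathcal{W}^-$ analogously.

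The first step is to compute $\langle \lambda, \chi\rangle$ in two different ways. On one hand, since $\chi \in F_r(\lambda)^{\text{int}} \subset H_r(\lambda)$, the defining equation of $H_r(\lambda)$ gives
\[
\langle \lambda, \chi\rangle = -r\langle \lambda, N^{\lambda>0}\rangle = -r\sum_{\beta \in \mathcal{W}^+}\langle \lambda, \beta\rangle.
\]
On the other hand, writing $\psi = \sum_{\gamma\in\mathcal{W}\setminus J} c_\gamma \gamma$ with $0 < c_\gamma < r$, the hypothesis yields
\[
\langle \lambda, \chi\rangle = -r\sum_{\beta \in J}\langle \lambda, \beta\rangle - \sum_{\gamma \in \mathcal{W}\setminus J}c_\gamma \langle \lambda, \gamma\rangle.
\]

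The second step is to split each sum by the sign of $\langle \lambda, \cdot\rangle$, writing $J^\epsilon := J\cap \mathcal{W}^\epsilon$ (with $\epsilon \in \{+,0,-\}$). Contributions from $\mathcal{W}^0$ vanish on both sides, so equating the two expressions and rearranging produces
\[
\sum_{\gamma \in \mathcal{W}^+\setminus J^+}(c_\gamma - r)\langle \lambda, \gamma\rangle \;+\; \sum_{\gamma \in \mathcal{W}^-\setminus J^-}c_\gamma\langle \lambda, \gamma\rangle \;+\; r\sum_{\beta \in J^-}\langle \lambda, \beta\rangle = 0.
\]
Every term in each of the three sums is strictly negative: in the first, $c_\gamma - r < 0$ and $\langle \lambda, \gamma\rangle > 0$; in the second, $c_\gamma > 0$ and $\langle \lambda, \gamma\rangle < 0$; in the third, $r > 0$ and $\langle \lambda, \beta\rangle < 0$. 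Hence each of the three index sets must be empty. Emptiness of the first set is precisely the desired inclusion $\mathcal{W}^+ \subset J$.

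I do not anticipate any obstacle; the proof is a one-shot pairing computation. A mild observation is that the argument only invokes $\chi \in F_r(\lambda)$ (through the identity $\langle \lambda, \chi\rangle = -r\langle \lambda, N^{\lambda>0}\rangle$) rather than the interior condition $\chi \in F_r(\lambda)^{\text{int}}$, and the hypothesis $r \geq \tfrac{1}{2}$ is not explicitly used either. What is genuinely crucial is the strict inequality $c_\gamma < r$: allowing $c_\gamma = r$ would let the first of the three sums be nonempty with zero-valued terms, and the conclusion $\mathcal{W}^+ \subset J$ would fail.
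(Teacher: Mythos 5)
Your proof is correct and is essentially the paper's argument, just presented more directly: the paper writes $\chi$ in two ways (once via Proposition \ref{prop} as $-rN^{\lambda>0}+\phi_0$, once as $-r\sum_J\beta-\psi_+-\psi_0-\psi_-$), rearranges, and then compares $\lambda$-weights of the two sides, whereas you pair with $\lambda$ from the start and avoid introducing $\phi_0$. Both reduce to the same sign computation that forces $\mathcal{W}^+\setminus J^+$, $\mathcal{W}^-\setminus J^-$, and $J^-$ to be empty, and your observations that only $\chi\in F_r(\lambda)$ (not the interior) and the strict bound $c_\gamma<r$ are needed are accurate.
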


\begin{proof}
Denote by $\alpha_+$ a sum of weights in $\{\beta|\,\langle \lambda, \beta\rangle>0\}$ with positive coefficients, by $\alpha_0$ a sum of weights in $\{\beta|\,\langle \lambda, \beta\rangle=0\}$ with positive coefficients, and by $\alpha_-$ a sum of weights in $\{\beta|\,\langle \lambda, \beta\rangle<0\}$ with positive coefficients.
Using Proposition \ref{prop}, there is a weight $\phi_0$ with $r(\phi_0)<r$ such that
\[\chi=-rN^{\lambda>0}+\phi_0.\]
Let $J_+=J\cap\{\beta|\,\langle \lambda, \beta\rangle>0\}$ etc.
Choose $\psi_+$, $\psi_0$, $\psi_-$ such that
\[\chi=-r\sum_{J}\beta-\psi_+-\psi_0-\psi_-.\]
We can thus write
\[rN^{\lambda>0}-r\sum_{J_+}\beta-\psi_+=
r\sum_{J_0}\beta+r\sum_{J_-}\beta+\psi_0-\phi_0+\psi_-.\]
The right hand side has $\lambda$-weight $\leq 0$, while the left hand side has $\lambda$-weight $\geq 0$. The $\lambda$-weight of the left hand side is thus zero and this implies that $J_+=\{\beta|\,\langle \lambda, \beta\rangle>0\}$.
\end{proof}

\subsubsection{}\label{godown2}

The following is an immediate corollary of Proposition \ref{prop}:

\begin{cor}\label{godown}
Let $\chi\in M^+_\mathbb{R}$ with $r(\chi)=r$
and let $\lambda$ be the antidominant cocharacter of $SG(d)$ with Levi group $L$ such that $\chi\in F_r(\lambda)^{\text{int}}$. Then
$$\chi=-rN^{\lambda>0}+\psi,$$ where $r(\psi)=s<r$, $\psi$ is $L$-dominant, and $\psi\in s\mathbb{W}(L)$. 
\end{cor}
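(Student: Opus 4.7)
The plan is to apply Proposition \ref{prop} directly to $\chi$ and then verify the three conclusions---the bound on $r(\psi)$, the $L$-dominance of $\psi$, and the containment $\psi \in s\mathbb{W}(L)$---one by one. Since $\chi \in F_r(\lambda)^{\text{int}}$, Proposition \ref{prop} yields a presentation
$$\chi = \sum_{\beta \in \mathcal{W}} c_\beta \beta + c\tau_d$$
in which $c_\beta = -r$ when $\langle \lambda, \beta\rangle > 0$, $c_\beta = 0$ when $\langle \lambda, \beta\rangle < 0$, and $c_\beta \in (-r, 0]$ when $\langle \lambda, \beta\rangle = 0$. Setting $\psi := \chi + rN^{\lambda > 0}$ cancels off the $\lambda$-positive part and leaves
$$\psi = \sum_{\beta \in \mathcal{W}^\lambda} c_\beta \beta + c\tau_d,$$
where $\mathcal{W}^\lambda$ is the multiset of $\lambda$-fixed weights, namely the weights of $R(d)^\lambda = \bigoplus_i R(d_i)$.

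Two of the three assertions drop out immediately. Because $\mathcal{W}^\lambda$ is finite and every surviving coefficient $c_\beta$ is strictly greater than $-r$, there is some $s < r$ with $c_\beta \in [-s, 0]$ for all $\beta \in \mathcal{W}^\lambda$, and the displayed expression then certifies $\psi \in s\mathbb{W}$, hence $r(\psi) \leq s < r$. For $L$-dominance, $\chi$ is $G(d)$-dominant and therefore $L$-dominant because the coroots of $L = L(\lambda)$ form a subset of those of $G(d)$. Moreover, $N^{\lambda > 0}$ is the weight of the one-dimensional $L$-representation $\det \mathbb{L}_{R(d)}^{\lambda > 0}|_{R(d)^\lambda}$ and is therefore $\mathfrak{S}_L$-invariant, so it pairs to zero with every coroot of $L$. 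Adding $rN^{\lambda>0}$ to an $L$-dominant weight consequently preserves $L$-dominance, giving $L$-dominance of $\psi$.

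The finer point is the refined containment $\psi \in s\mathbb{W}(L)$. Under the identification $M(d) \cong \bigoplus_i M(d_i)$ of Subsection \ref{id}, $\mathcal{W}^\lambda$ partitions as $\bigsqcup_i \mathcal{W}(d_i)$ with each $\beta \in \mathcal{W}(d_i)$ lying in the summand $M(d_i)_{\mathbb{R}}$. Grouping by $i$, $\psi$ decomposes as $\sum_i \psi_i + c\tau_d$ with $\psi_i := \sum_{\beta \in \mathcal{W}(d_i)} c_\beta \beta$; each $\psi_i$ sits in the $s$-scaled slice $s\mathbb{W}(d_i)_0$, whence $\psi \in \bigoplus_i s\mathbb{W}(d_i)_0 \oplus \mathbb{R}\tau_d = s\mathbb{W}(L)$. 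The main subtle point is verifying that each $\psi_i$ genuinely lands in the $\tau_{d_i}$-free slice $\mathbb{W}(d_i)_0$ rather than merely in $\mathbb{W}(d_i)$; this uses the transversality $M(d_i)_{\mathbb{R}} = \mathbb{R}\tau_{d_i} \oplus \operatorname{span}(\mathcal{W}(d_i))$ invoked in Subsection \ref{polytope} (valid when each $Q^{d_i}$ is connected and not $Q^o$), so that the expression $\sum_{\beta} c_\beta \beta$ carries no $\tau_{d_i}$ component by construction.
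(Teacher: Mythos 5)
Your proof is correct and fills in exactly the steps the paper leaves implicit when it calls this ``an immediate corollary of Proposition~\ref{prop}'': apply Proposition~\ref{prop} to get the coefficient presentation, cancel the $\lambda$-positive part, and then check the three claimed properties of $\psi$. The $L$-dominance argument (dominance of $\chi$ plus $\mathfrak{S}_L$-invariance of $N^{\lambda>0}$) and the refined containment via the block decomposition $\mathcal{W}^\lambda = \bigsqcup_i \mathcal{W}(d_i)$ are both the right mechanisms.

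One small remark: what you establish is that there exists $s' < r$ with $\psi \in s'\mathbb{W}(L)$, hence $r(\psi)\leq s'<r$ --- your ``$s$'' is the coefficient bound $\max(-c_\beta)$, which a priori only bounds $r(\psi)$ from above rather than equaling it. Read literally, the corollary names $s := r(\psi)$ and asserts $\psi \in s\mathbb{W}(L)$, which would be a slightly sharper statement; but the strict decrease of the $r$-invariant and membership in some $s'\mathbb{W}(L)$ with $s'<r$ is all that the recursion in \eqref{godown5} and the induction in Proposition~\ref{gen0} ever use, so this does not affect any downstream argument. Also note that for the final containment you do not actually need the full transversality $M(d_i)_{\mathbb{R}} = \mathbb{R}\tau_{d_i}\oplus\operatorname{span}(\mathcal{W}(d_i))$; it suffices that $\tau_{d_i}\notin\operatorname{span}(\mathcal{W}(d_i))$, which holds unconditionally since $\langle 1_{d_i},\tau_{d_i}\rangle = 1$ while every $\beta\in\mathcal{W}(d_i)$ pairs to zero with $1_{d_i}$, so the restriction to connected non-$Q^o$ blocks is not needed for this step.
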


Recall the tree $\mathcal{T}$ from Subsection \ref{tree}.
Applying Corollary \ref{godown} repeatedly, we obtain a decomposition of $\chi$, see Proposition \ref{prop:decompchi}. Before we state it, we introduce some notation.

For $d_a$ a summand of a partition of $d$, denote by $M(d_a)\subset M(d)$ the subspace as in the decomposition from Subsection~\ref{id} and let $A\subset \{1,\ldots, d\}$ be the set of indices of weights of standard representation corresponding to $M(d_a)\subset M(d)$. 
Assume that $j$ is a partition of a dimension $d_a\in \mathbb{N}$, alternatively, $j$ is an edge of the tree $\mathcal{T}$ introduced in Subsection \ref{tree}.
Let $\lambda_{j}$ be the corresponding 
antidominant cocharacter of $T(d_a)$, see Subsection \ref{tree}. 
Let $\mathcal{W}_{j}\subset \mathcal{W}$ be the multiset of weights
with $\langle \lambda_{j}, \beta\rangle>0$. 
Define \[N_j:=\sum_{\beta\in\mathcal{W}_{j}}\beta.\]
If $R(d)=\mathfrak{g}(d)$, we use the notation 
\begin{equation}\label{notationmathfrak}
    \mathfrak{g}_j:=N_j.
\end{equation}

\begin{prop}\label{prop:decompchi}
Let $\chi$ be a weight in $M(d)^+_\mathbb{R}$. 
%such that $\chi+\rho+\delta\in \textbf{V}(d)$. We use the results in Subsection \ref{decompchitree} for the weight $\chi+\rho+\delta-(w+d\mu)\tau_d=\chi+\rho-w\tau_d$.
There exists:
\begin{enumerate}
    \item a path of partitions $T$, see Subsection \ref{tree}, 
    %with decomposition 
%$(d_i)_{i=1}^k$ at the end vertex,
\item coefficients $r_j$ for $j\in T$ such that $r_{j}>1/2$ 
if $j$ corresponds to a partition with length $>1$, 
and $r_{j}=0$ otherwise; further, if $j, j'\in T$ are vertices 
corresponding to partitions with length $>1$, and 
with a path from $j$ to $j'$, then $r_{j}> r_{j'}> \frac{1}{2}$, and
\item weight $\psi\in \frac{1}{2}\mathbf{W}(d)$
such that:
\end{enumerate}
\begin{equation}\label{godown5}
    \chi=-\sum_{j\in T}r_j N_\ell+\psi.
\end{equation}
\end{prop}

We call the expansion \eqref{godown5} \textit{the standard form of} $\chi\in M^+_\mathbb{R}$.

For future reference, we note the following:

\begin{prop}\label{propnj}
    In the setting of Proposition \ref{prop:decompchi}, let $L$ be the Levi group corresponding to the path of partitions $T$ and let $\mathfrak{S}_L$ be the Weyl group of $L$. Then $\sum_{j\in T}r_jN_j\in M(d)^{\mathfrak{S}_L}_\mathbb{R}$.
\end{prop}

\begin{proof}
    It suffices to show that, in the setting of Corollary \ref{godown}, we have that $N^{\lambda>0}\in M(d)^{\mathfrak{S}_L}_\mathbb{R}$.
    The weight $N^{\lambda>0}$ is a linear combination after the edges of $Q$:
    \[N^{\lambda>0}=\sum_{e\in E}\mathrm{Hom}(\mathbb{C}^{d_{s(e)}}, \mathbb{C}^{d_{t(e)}})^{\lambda>0}\in M(d).\]
    It thus suffices to check the claim when $Q$ is an edge between vertices $i$ and $j$.
    Assume for simplicity that $\lambda$ corresponds to a length $2$ partition $(e,f)$ of $d$. 
    Then 
    \[N^{\lambda>0}=\mathrm{Hom}(\mathbb{C}^{d_{i}}, \mathbb{C}^{d_j})^{\lambda>0}=e_i\left(\sum_{a>e_j}\beta^j_a\right)-f_j\left(\sum_{a\leq e_i}\beta^i_a\right),\] which is indeed invariant under the action of $\mathfrak{S}_{e}\times\mathfrak{S}_f$.
\end{proof}

\subsection{A corollary of the Borel-Weyl-Bott theorem}
% Let $G$ be a reductive group, $V$ a representation of $G$, and let $\X:=V/G$.

For future reference, we state \cite[Section 3.2]{hls}. For $I$ a set of weights, we denote by $\sigma_I$ the sum of the weights in $I$. Let $\chi$ be a weight in $M$. Let $\chi^+$ be the dominant Weyl-shifted conjugate of $\chi$ if it exists, and zero otherwise. Let
$w$ be the element of the Weyl group such that $w*(\chi-\sigma_I)$ is dominant or zero. It has length $\ell(w)=:\ell(I)$.

\begin{prop}\label{bbw}
Let $\lambda$ be a cocharacter of $SG(d)$. Recall the maps $p_{\lambda}$ and $q_{\lambda}$ from \eqref{e}.
Let $\chi\in M^+$. 
Then there is a spectral sequence converging to $p_{\lambda*}q_{\lambda}^*\left(\mathcal{O}_{\X(d)^\lambda}\otimes\Gamma_{G(d)^\lambda}(\chi)\right)$ with terms $\mathcal{O}_{\X(d)}\otimes \Gamma_{G(d)}\left((\chi-\sigma_I)^+\right)$ in degree $|I|-\ell(I)$ corresponding to subsets $I\subset \{\beta|\,\langle \lambda, \beta\rangle<0\}$.
\end{prop}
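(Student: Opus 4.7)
The plan is to factor the proper map $p_\lambda$ through a parabolic quotient and combine a Koszul resolution with Borel--Weil--Bott applied fiberwise. Let $P := G(d)^{\lambda\geq 0}$, a parabolic subgroup of $G(d)$ with Levi $G(d)^\lambda$, and fix a Borel $B\subset P$ (so $B^\lambda := B\cap G(d)^\lambda$ is a Borel of $G(d)^\lambda$). Since $R(d)^{\lambda\geq 0}$ is a linear (hence closed) subspace of $R(d)$, there is a factorization
\[
\X(d)^{\lambda\geq 0} = R(d)^{\lambda\geq 0}/P \xrightarrow{j} R(d)/P \xrightarrow{\pi} \X(d),
\]
where $j$ is a closed embedding with conormal bundle dual to the $P$-module $R(d)^{\lambda<0}$, and $\pi$ is a smooth proper fibration with fiber $G(d)/P$. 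Since $q_\lambda^*\bigl(\mathcal{O}_{\X(d)^\lambda}\otimes \Gamma_{G(d)^\lambda}(\chi)\bigr) = \mathcal{O}_{\X(d)^{\lambda\geq 0}}\otimes \Gamma_{G(d)^\lambda}(\chi)$, the task reduces to computing $R\pi_*\,Rj_*$ of this sheaf.

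To evaluate $Rj_*$, I would use the Koszul resolution: $j_*\mathcal{O}_{\X(d)^{\lambda\geq 0}}$ is resolved on $R(d)/P$ by $\Lambda^\bullet(R(d)^{\lambda<0})^\vee\otimes \mathcal{O}_{R(d)/P}$. To evaluate $R\pi_*$ of each Koszul term after tensoring with $\Gamma_{G(d)^\lambda}(\chi)$, I would pass to the Borel quotient via the flag bundle $r\colon R(d)/B \to R(d)/P$ with fiber $P/B\cong G(d)^\lambda/B^\lambda$. By Borel--Weil applied fiberwise, $r_*\mathcal{L}_\chi\cong \mathcal{O}_{R(d)/P}\otimes \Gamma_{G(d)^\lambda}(\chi)$, where $\mathcal{L}_\chi$ is the line bundle on $R(d)/B$ induced from the character $\chi$ of $B$. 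As a $B$-module, $\Lambda^i(R(d)^{\lambda<0})^\vee$ admits a filtration whose graded pieces are one-dimensional $T$-characters $\mathbb{C}_{-\sigma_I}$ for subsets $I\subset \{\beta\mid \langle\lambda,\beta\rangle<0\}$ with $|I|=i$. Via base change along $r$ together with the projection formula, the computation of $R\pi_*$ on each Koszul term is reduced to computing $R(\pi\circ r)_*\mathcal{L}_{\chi-\sigma_I}$, one line bundle for each subset $I$.

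Finally, Borel--Weil--Bott for the flag fibration $\pi\circ r\colon R(d)/B\to \X(d)$ with fiber $G(d)/B$ gives
\[
R(\pi\circ r)_*\mathcal{L}_{\chi-\sigma_I}\cong \mathcal{O}_{\X(d)}\otimes \Gamma_{G(d)}\bigl((\chi-\sigma_I)^+\bigr)
\]
concentrated in cohomological degree $\ell(I)$, and vanishing when $\chi-\sigma_I$ is singular for the shifted Weyl action (matching the convention $(\chi-\sigma_I)^+=0$). Combining this Borel--Weil--Bott contribution with the Koszul degree $|I|$ produces a spectral sequence with the stated $E_1$-term $\mathcal{O}_{\X(d)}\otimes \Gamma_{G(d)}\bigl((\chi-\sigma_I)^+\bigr)$ in degree $|I|-\ell(I)$, converging to $p_{\lambda*}q_\lambda^*(\mathcal{O}_{\X(d)^\lambda}\otimes \Gamma_{G(d)^\lambda}(\chi))$.

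The main obstacle I anticipate is setting up the spectral sequence cleanly: the $B$-module filtration on $\Lambda^i(R(d)^{\lambda<0})^\vee$ is a genuine filtration rather than a direct sum, so contributions from distinct subsets $I$ with $|I|=i$ only assemble into the associated graded. Tracking the compatibility of the Koszul filtration with this $B$-module filtration, and verifying base change along the square involving $r$ and the flag bundle over $R(d)^{\lambda\geq 0}/P$, are the main bookkeeping tasks. Since the proposition is explicitly identified as a restatement of \cite[Section 3.2]{hls}, the argument should parallel theirs closely.
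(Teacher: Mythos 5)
The paper does not actually supply a proof of Proposition~\ref{bbw}: the preceding sentence reads ``For future reference, we state \cite[Section 3.2]{hls},'' so this is a quotation from Halpern-Leistner--Sam rather than a statement the paper proves. Your task is therefore to reconstruct that argument, and your strategy --- factoring $p_\lambda$ through the closed embedding $j\colon R(d)^{\lambda\geq 0}/P\hookrightarrow R(d)/P$ followed by the flag fibration $\pi\colon R(d)/P\to\X(d)$, resolving $j_*$ by the Koszul complex of the $P$-module $R(d)^{\lambda<0}$, then passing to $R(d)/B$ via the relative flag bundle $r$ and filtering each exterior power by one-dimensional $B$-modules so that Borel--Weil--Bott applies termwise --- is precisely the natural argument and matches the approach of \cite{hls}. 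The identification $q_\lambda^*\big(\mathcal{O}_{\X^\lambda}\otimes\Gamma_{G^\lambda}(\chi)\big)\cong\mathcal{O}_{\X^{\lambda\geq 0}}\otimes\Gamma_{G^\lambda}(\chi)$, the identification of the conormal bundle with $(R(d)^{\lambda<0})^\vee$, the $B$-module filtration on $\Lambda^i(R(d)^{\lambda<0})^\vee$ with graded pieces $\mathbb{C}_{-\sigma_I}$, and the use of base change and the projection formula along $r$ are all correct.

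The step that is not actually carried out is the degree bookkeeping, and your own accounting is internally inconsistent. You correctly state that Borel--Weil--Bott places $\Gamma_{G(d)}\big((\chi-\sigma_I)^+\big)$ in cohomological degree $\ell(I)$, and you refer to ``the Koszul degree $|I|$'', but then assert the combined total is $|I|-\ell(I)$; neither $|I|+\ell(I)$ nor $-|I|+\ell(I)$ equals $|I|-\ell(I)$, so the stated degree is simply asserted to agree with the Proposition rather than derived. If one keeps track carefully, the Koszul resolution of $j_*\mathcal{O}$ places $\Lambda^{|I|}(R(d)^{\lambda<0})^\vee$ in cohomological degree $-|I|$, and $R(\pi\circ r)_*$ contributes degree $+\ell(I)$, so the total is $\ell(I)-|I|$, the negative of what the Proposition states. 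You should not quietly absorb this sign; you should either locate the extra shift in the conventions of \cite{hls} or flag that the stated degree appears to carry the opposite sign. (One internal consistency check: in the proof of Proposition~\ref{commu2}(b) the term $\mathcal{E}\boxtimes\mathcal{F}$ for $I=\mathcal{W}_{f,e}$ must land in degree~$0$ after the input is shifted by $[\chi(e,f)]$, which forces the term to sit in degree $\chi(e,f)=\ell(\mathcal{W}_{f,e})-|\mathcal{W}_{f,e}|$, consistent with $\ell(I)-|I|$ rather than $|I|-\ell(I)$.) Aside from this sign issue, which you should resolve explicitly rather than by matching the printed formula, the proof is sound and takes the intended route.
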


We will be using Proposition \ref{bbw} in conjunction with the following computation.

\begin{prop}\label{rgoesdown}
Assume that $Q=Q^d$ is connected and that $Q$ is not $Q^o$.
Let $\chi\in M^+_{\mathbb{R}}$ with $r(\chi)=r>\frac{1}{2}$ and let $\lambda$ be the antidominant cocharacter of $SG(d)$ with $\chi\in F_r(\lambda)^{\text{int}}$.
Let
$I$ be a non-empty subset of  $\{\beta|\,\langle \lambda, \beta\rangle<0\}$.
Then $\chi-\sigma_I$ is in $r\mathbb{W}$. If $\chi-\sigma_I$ lies on $F_r(\mu)^{\text{int}}$, then $\mu<\lambda$. In particular, we have that \[(r,p)(\chi-\sigma_I)<(r,p)(\chi).\]
%The complex $p_{\lambda*}q_{\lambda}^*\left(\OO_{\X^\lambda}\otimes \Gamma(\chi)\right)$ is in $r\WW$ and the only interior faces it touches are $F_r(\mu)^{\text{int}}$ for $\mu\leq\lambda$. It touches the face $F_r(\lambda)^{\text{int}}$ only for $V(\chi)\otimes\OO$. In particular, for a nontrivial sum $\sigma$ as above, we have:$$(r,p)(\chi-\sigma)<(r,p)(\chi).$$
\end{prop}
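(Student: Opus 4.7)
The plan is to exploit the symmetry of $\mathcal{W}$ coming from $Q$ being symmetric (so $\beta \in \mathcal{W}$ iff $-\beta \in \mathcal{W}$, with the same multiplicity) and rebalance coefficients pair-by-pair. By Proposition \ref{prop}, since $\chi \in F_r(\lambda)^{\text{int}}$ one may write
\[\chi = \sum_{\beta \in \mathcal{W}} c_\beta \beta + c\tau_d\]
with $c_\beta = -r$ if $\langle \lambda, \beta\rangle > 0$, $c_\beta = 0$ if $\langle \lambda, \beta\rangle < 0$, and $c_\beta \in (-r, 0]$ if $\langle \lambda, \beta\rangle = 0$. Subtracting $\sigma_I$ only changes coefficients on the symmetric pairs $\{\gamma_+, \gamma_-\}$ (where $\gamma_- = -\gamma_+$) containing an element of $I$.

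Consider such a pair with $\gamma_-$ the side satisfying $\langle \lambda, \gamma_-\rangle < 0$, common multiplicity $m$ in $\mathcal{W}$, and $k \in \{1, \dots, m\}$ copies of $\gamma_-$ inside $I$. The net pair contribution to $\chi - \sigma_I$ equals $(k - rm)\gamma_+$. The key quantitative input is that for $r > 1/2$ and $k \geq 1$ this can be written as $(A - B)\gamma_+$ with $A, B \in (-rm, 0]$ strictly, where $A$ and $B$ are sums of $m$ coefficients in $[-r, 0]$ on the $\gamma_+$ and $\gamma_-$ sides; spreading $A$ and $B$ evenly then gives every individual coefficient in $(-r, 0]$, none equal to $-r$. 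The essential inequality $k < 2rm$ follows from $k \leq m < 2rm$ using $r > 1/2$. For pairs untouched by $I$ the coefficients from $\chi$ work unchanged. This produces a representation of $\chi - \sigma_I$ in $r\mathbb{W}$, proving the first claim, and exhibits the set of coefficients equal to $-r$ as
\[J' = \{\beta \in \mathcal{W} : \langle \lambda, \beta\rangle > 0 \text{ and } -\beta \notin I\} \subsetneq \{\beta \in \mathcal{W} : \langle \lambda, \beta\rangle > 0\},\]
strict because $I$ is nonempty.

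If $\chi - \sigma_I \in F_r(\mu)^{\text{int}}$, Proposition \ref{subse} applied to the representation above forces $\{\beta : \langle \mu, \beta\rangle > 0\} \subset J' \subsetneq \{\beta : \langle \lambda, \beta\rangle > 0\}$; equivalently $F_r(\lambda) \subsetneq F_r(\mu)$, which is the statement $\mu < \lambda$. For the $(r,p)$-comparison, if $r(\chi - \sigma_I) < r$ the lex-order inequality is automatic; if equality holds, Proposition \ref{prop} identifies the $p$-invariant with the multiplicity-count $|\{\beta \in \mathcal{W} : \langle \cdot, \beta\rangle > 0\}|$ and the strict inclusion above gives $p(\chi - \sigma_I) < p(\chi)$. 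The main obstacle is the pair-rebalancing step, which genuinely needs the strict bound $r > 1/2$; at $r = 1/2$ the slack vanishes and some pair coefficients remain forced to the boundary value $-r$, destroying the strict inclusion that drives the rest of the argument.
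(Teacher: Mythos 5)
Your proof is correct and follows essentially the same route as the paper's: use Proposition \ref{prop} to write $\chi$ with coefficients $0$, $-r$, $(-r,0]$ according to the sign of $\langle\lambda,\cdot\rangle$, exploit the symmetry $\beta\in\mathcal{W}\Leftrightarrow-\beta\in\mathcal{W}$ to rewrite $\chi-\sigma_I$ with all coefficients still in $[-r,0]$ using $r>\tfrac12$, and feed the resulting strictly smaller set of $-r$ coefficients into Proposition \ref{subse}. The only real difference is in the rebalancing step: the paper does a per-copy substitution, changing $(-r+1)\beta$ to $(r-1)(-\beta)$ for each copy in $\widetilde I=\{-\beta\mid\beta\in I\}$ when $-r+1>0$, whereas you average the contribution $(k-rm)\gamma_+$ of each touched pair over all $2m$ copies; both variants use the bound $r>\tfrac12$ in the same essential way (respectively $r-1>-r$, or $k\leq m<2rm$), so this is a cosmetic rather than structural divergence.
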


\begin{proof}
By Proposition \ref{prop}, there are coefficients $c\in\mathbb{R}$ and 
\[  c_{\beta}= 
     \begin{cases}
       0 \text{ if }\langle \lambda, \beta\rangle<0,\\
       -r \text{ if } \langle \lambda, \beta\rangle>0, \\
       \text{in }(-r,0] \text{ if }
\langle \lambda, \beta\rangle=0  \\ 
     \end{cases}
\] such that
\[\chi=\sum_{\beta\in\mathcal{W}} c_{\beta}\beta+ c\tau_d.\]
The representation $R(d)$ is symmetric. Consider the set \[\widetilde{I}:=\{-\beta|\,\beta\in I\}\subset \{\beta|\,\langle \lambda, \beta\rangle>0\}.\]
Then $-\sigma_I=\sum_{\widetilde{I}} \beta$. 
This means that
\[\chi-\sigma_I=\sum c'_{\beta}\beta+c\tau_d,\] 
where 
\[c'_{\beta}=\begin{cases}
  c_{\beta}+1=-r+1\text{ if }\beta\in \widetilde{I},\\
  c_{\beta}\text{ otherwise}.
\end{cases}
\]
If $-r+1>0$, replace the weight $(-r+1)\beta$ with the weight $(r-1)(-\beta)$. All the coefficients in $\chi-\sigma_I$ are thus between $-r$ and $0$, so
$\chi-\sigma_I\in r\WW.$ Because $r>\frac{1}{2}$, the set of weights $\beta$ with $c'_\beta=r$ is included in 
$\{\beta|\,\langle \lambda, \beta\rangle>0\}$; when $I$ is non-empty, this inclusion is strict. By Proposition \ref{subse}, if $\chi-\sigma_I$ lies on $F_r(\mu)^{\text{int}}$, then $\mu<\lambda$.
\end{proof}

\subsection{Admissible sets}

%Assume also that there are strictly stable representations of $Q$ of any dimension $d\in\mathbb{N}^I$.

\subsubsection{}
\label{admissible}

In this subsection, we explain how to write a dominant weight as the sum of a       ``large part" $\chi_A$ (which appears in setting the summands of the semi-orthogonal decomposition in Theorem \ref{them1}) and a ``small part" (which is in $\frac{1}{2}\mathbb{W}$). 

We assume that $Q=Q^d$ is connected and that $Q$ is not $Q^o$. Fix $\delta\in M^{\mathfrak{S}_d}_{\mathbb{R}}$.
Let $\chi\in M^+$ and consider the standard form \eqref{godown5}:
\[\chi+\rho+\delta=-\sum_{j\in T} r_jN_j+\psi,\]
for $\psi\in \frac{1}{2}\WW$.
Let $L\cong \times_{i=1}^k G(d_i)$. Write $\chi=\sum_{i=1}^k \chi_i$ with $\chi_i\in M(d_i)^+$ for $1\leq i\leq k$ and consider the associated partition \[A=A_\chi:=(d_i,w_i)_{i=1}^k,\] where $w_i=\langle 1_{d_i}, \chi_i\rangle$. Let $S^d_w(\delta)$ be the set of partitions $A$ for which there exists $\chi\in M^+$ with $\langle 1_d, \chi\rangle=w$ and such that $A_\chi=A$. 

Let $\chi'\in M^+$ be a different weight from $\chi$ with $A_{\chi'}=A$. Consider the standard form
\[\chi'+\rho+\delta=-\sum_{j\in T'} r'_jN'_j+\psi.\]
Then $T\cong T'$, and under this identification, $\lambda_j=\lambda'_j$ and $r_j=r'_j$. Indeed, the order of the cocharacters $\lambda_j$ and the coefficients $r_j$ are computed only using the weights $w_i$ of $\chi_i$ and $\chi'_i$ for $1\leq i\leq k$. To any such partition $A$, associate the weight
\begin{equation}\label{weightA}
\chi_A:=-\sum_{j\in T} r_jN_j-\rho^{\lambda_{\dd}<0}-\delta.
\end{equation}
For $1\leq i\leq k$, consider the weights $\delta_{Ai}\in M(d_i)_{\mathbb{R}}^{\mathfrak{S}_{d_i}}$ defined by:
\begin{equation}\label{deltaai}
    -\chi_A=\sum_{i=1}^k \delta_{Ai}\text{ in }M(d)_{\mathbb{R}}\cong \bigoplus_{i=1}^k M(d_i)_{\mathbb{R}}.
    \end{equation}
To see the the weights $\delta_{Ai}$ are Weyl invariant, note that $\delta$ is $\mathfrak{S}_d$-invariant and $\rho^{\lambda_{\dd}<0}$ is a linear combination of $\mathfrak{gl}(d_i)$, and thus also $\times_{i=1}^k \mathfrak{S}_{d_i}$-invariant. The claim for $\sum_{j\in T} r_jN_j$ follows from Proposition \ref{propnj}.

The sets $S^d_w(\delta)$ and the weights $\chi_A$ are used in formulating of the semi-orthogonal decomposition from Theorem \ref{them1}. 

\begin{remark}\label{rem39}
    In general, it is not clear how to characterize the weights $\chi_A$. Given an explicit example of a quiver, one may attempt to describe the weights $\chi_A$ and to compute combinatorially the set $S^d_w(\delta)$ in order to make Theorem \ref{them1} more explicit.

    The example of $Q$ the Jordan quiver (the quiver with one vertex and one loop) is discussed in \cite{Todaquotients}, see also Subsection \ref{sub42}. Let $\chi\in M(d)^+$ with $\langle 1_d, \chi\rangle=w$ and let $\delta=-w\tau_d$. Recall the notation \eqref{notationmathfrak}. The decomposition \eqref{godown5} can be written as
    \begin{equation}\label{godown55}
    \chi+\rho-w\tau_d=-\sum_{j\in T}r_j\mathfrak{g}_j+\sum_{i=1}^k (\psi_i+\rho_i),
     \end{equation} 
    for a partition $(d_i)_{i=1}^k$ of $d$, where $\psi_i\in M(d_i)_\mathbb{R}^+$ with $
    \psi_i\in \frac{1}{2}\mathbb{W}(d_i)_0$, and $\rho_i$ is half the sum of positive roots of $\mathfrak{gl}(d_i)$. 

    Note that the only possibility for $\psi_i=0$, see \cite[Lemma 3.2]{Todaquotients}. Then $\chi_A=\chi$. So, in this case, there is a bijection between $S^d_w(\delta)$ and the integral dominant weights $\chi\in M(d)^+$ with $\langle 1_d, \chi\rangle=w$.

The example of $Q$ the quiver with one vertex and three loops has been considered in \cite{P2}, \cite{PadToda}. Let $\chi\in M(d)^+$ with $\langle 1_d, \chi\rangle=w$ and let $\delta=-w\tau_d$.
The analogue of the decomposition \eqref{godown55} is: 
\[\chi+\rho-w\tau_d=-\sum_{j\in T}3r_j\mathfrak{g}_j+\sum_{i=1}^k (\psi_i+\rho_i).\]
The weight $\chi_A$ is computed as \[\chi_A=w\tau_d-\sum_{j\in T}(3r_j-\frac{1}{2})\mathfrak{g}_j.\] It is combinatorially more convenient (as explained in loc. cit.) to consider $\widetilde{\chi}_A:=\chi_A-2\rho^{\lambda_{\dd}<0}$. Then
$\widetilde{\chi}_A=w\tau_d-\sum_{j\in T}3(r_j-\frac{1}{2})\mathfrak{g}_j$ is a dominant weight. 
Because $\widetilde{\chi}_A$ is $\times_{i=1}^k\mathfrak{S}_{d_i}$-invariant, it is a linear combination $\widetilde{\chi}_A=\sum_{i=1}^k v_i\tau_{d_i}$.
Using this, one obtains that the set $S^d_w(\delta)$ is in bijection (see the statement of \cite[Theorem 1.1]{P4}, \cite[Theorem 1.1]{PadToda}), with the set of tuplets $(d_i, v_i)_{i=1}^k\in (\mathbb{N}\times\mathbb{Z})^k$ with sum $(d,w)$ such that 
\[\frac{v_1}{d_1}<\ldots<\frac{v_k}{d_k}.\]
\end{remark}

\subsubsection{}\label{subsec332}
We assume that $Q=Q^d$ is connected and that $Q$ is not $Q^o$.
Let $\chi\in M^+$ and assume that $\chi+\rho+\delta\in \frac{1}{2}\partial\WW$. Using Corollary \ref{godown}, write
\[\chi+\rho+\delta=-\frac{1}{2}N^{\lambda_{\dd}>0}+\psi,\]
for $\psi\in \frac{1}{2}\WW^o$ and a partition $\dd=(d_i)_{i=1}^k$.
Write $\chi=\sum_{i=1}^k \chi_i$ with $\chi_i\in M(d_i)^+$ for $1\leq i\leq k$ and consider the associated partition \[A=A_\chi:=(d_i,w_i)_{i=1}^k,\] where $w_i=\langle 1_{d_i}, \chi_i\rangle$. Let $T^d_w(\delta)$ be the set of partitions $A$ for which there exists $\chi\in M^+$ with $\langle 1_d, \chi\rangle=w$ such that $A_\chi=A$. We define $\chi_A$ as in the previous Subsection. 

Let $U^d_w(\delta)$ be the set of partitions $A=(e_i, v_i)_{i=1}^l$ for which there exists a partition $B=(d_i, w_i)_{i=1}^k$ in $S^d_w(\delta)$,  integers \[a_0=0< a_1<\cdots<a_{k-1}\leq a_k=l\] such that for any $1\leq j\leq k$, $(e_i, v_i)_{i=a_{j-1}+1}^{a_{j}}$ is a partition in $T^{d_{j}}_{w_{j}}(\delta_{Bj})$.
%where $\delta=\sum_{i=1}^k \delta_j$ in $M(d)_{\mathbb{R}}\cong \bigoplus_{i=1}^k M(d_i)_{\mathbb{R}}$. 

\begin{remark}

We continue the discussion from Remark \ref{rem39}.

If $Q$ is the Jordan quiver, then $T^d_w(\delta)$ is empty.

If $Q$ is the quiver with one vertex and three loops, then $T^d_w(\delta)$ is in bijection with the set of tuplets $(d_i, v_i)_{i=1}^k\in (\mathbb{N}\times\mathbb{Z})^k$ with sum $(d,w)$ such that 
\[\frac{v_1}{d_1}=\ldots=\frac{v_k}{d_k}.\]
\end{remark}

\subsubsection{}\label{swe}

We assume that $Q=Q^d$.

Assume first that $Q=Q^o$. Then $\delta$ is a multiple of $\tau_d$. Let $U^d_w(\delta)=S^d_w(\delta)$ be the set of partitions $(1,w_i)_{i=1}^d$ of $(d,w)\in\mathbb{N}\times\mathbb{Z}$ with $w_1\geq\cdots\geq w_d$. Let $T^d_w(\delta)$ be empty for $d>1$ and $T^1_w(\delta)=\{(1,w)\}.$

Assume that $Q$ is a disconnected quiver and let $\delta\in M(d)_{\mathbb{R}}^{\mathfrak{S}_d}$.
If $Q$ is a disjoint union of connected quivers $Q_j$ for $j\in J$, write $d_j$ and $\delta_j$ for the corresponding dimension vector and Weyl invariant weight of $Q_j$ for $j\in J$. Let $C$ be the set of partitions $(w_j)_{j\in J}$ of $w$.
Let 
\begin{align*}
    S^d_w(\delta)&:=\bigsqcup_C\left(\times_{j\in J} S^{d_j}_{w_j}(\delta_j)\right),\\
    T^d_w(\delta)&:=\bigsqcup_C\left(\times_{j\in J} T^{d_j}_{w_j}(\delta_j)\right).
\end{align*}
Then $U^d_w(\delta)\cong\bigsqcup_C
\left(\times_{j\in J} U^{d_j}_w(\delta_j)\right).$

\subsubsection{}\label{compadm}
Assume first that $Q=Q^d$ is connected and that $Q$ is not $Q^o$. Consider two partitions $A=(d_i, w_i)_{i=1}^k$ and $B=(e_i, v_i)_{i=1}^l$ in $S^d_w(\delta)$. Let $\chi_A$ and $\chi_B$ be two weights with associated sets $A$ and $B$:
\begin{align*}
    \chi_A+\rho+\delta&:=-\sum_{j\in T_A}r_{A,k}N_{A,k}+\psi_A,\\
    \chi_B+\rho+\delta&:=-\sum_{j\in T_B}r_{B,k}N_{B,k}+\psi_B.
\end{align*}
The set $R\subset S^d_w(\delta)\times S^d_w(\delta)$ contains pairs $(A,B)$ for which there exists $c\geq 1$ such that $r_{A,c}>r_{B,c}$ and $r_{A,i}=r_{B,i}$ for $i<c$, or for which there exists $c\geq 1$ such that $r_{A,i}=r_{B,i}$ for $i\leq c$, $\lambda_{A,i}=\lambda_{B,i}$ for $i<c$, and $\lambda_{A,c}> \lambda_{B,c}$, or with $A=B$.

Let $O:=S^d_w(\delta)\times S^d_w(\delta)\setminus R$.

For $Q^o$, let $R=\{(A,A)|A\in S^d_w(\delta)\}$, and let $O:=S^d_w(\delta)\times S^d_w(\delta)\setminus R$.

Assume $Q$ is a disconnected quiver. We continue with the notation from Subsection \ref{swe}.
Consider the set $R_{j}\subset S^{d_j}_{w_j}(\delta_j)\times S^{d_j}_{w_j}(\delta_j)$ for the quiver $Q_j$, let \[R:=\bigsqcup_{C} \left(\times_{j\in J} R_j\right)\subset S^d_w(\delta)\times S^d_w(\delta),\] and let $O:=S^d_w(\delta)\times S^d_w(\delta)\setminus R$.

\section{Semi-orthogonal decompositions and relations in the categorical Hall algebra}\label{s3}

\subsection{Semi-orthogonal decompositions}\label{subsec41}

%\begin{thm}\label{qp}Consider $A=(d_i,w_i)_{1\leq i\leq k}$ in $S_w^d$. Let $\lambda:=\lambda_{d_1,\cdots,d_k}$ be the cocharacter associated to $A$.The functor\[p_{\lambda*}q_\lambda^*: \mathbb{M}_{A}\to \text{MF}^{\text{gr}}(\X(d),W)_w\] is fully faithful.There is a semi-orthogonal decomposition\[\text{MF}^{\text{gr}}(\X(d),W)_w=\Big\langle \mathbb{M}_{A}\Big\rangle,\]where the right hand side contains all $A$ in $S_w^d$. The order of categories in the semi-orthogonal decomposition respects the order from Subsection \ref{weight}, see also Subsection \ref{sod4}.

In this Section, we prove Theorem \ref{them1}. 
%For $A=(d_i,w_i)_{i=1}^k$,recall the weight $\chi_A$ from \eqref{weightA} and define $\delta_{Ai}\in M(d_i)_{\mathbb{R}}^{\mathfrak{S}_{d_i}}$ by\begin{equation}\label{deltaai}-\chi_A=\sum_{i=1}^k \delta_{Ai}\text{ in }M(d)_{\mathbb{R}}\cong \bigoplus_{i=1}^k M(d_i)_{\mathbb{R}}.\end{equation}
Let 
\begin{align*}
   \mathbb{M}_A(\delta)&:=\otimes_{i=1}^k \mathbb{M}\left(d_i;\delta_{Ai}\right)_{w_i}, \\
   \mathbb{S}_A(\delta)&:=\text{MF}\left(\mathbb{M}_A(\delta), \bigoplus_{i=1}^k W_{d_i}\right).
\end{align*}
Given a grading, recall that $\mathbb{S}^{\text{gr}}_A(\delta):=\text{MF}^{\text{gr}}\left(\mathbb{M}_A(\delta), \bigoplus_{i=1}^k W_{d_i}\right)$. We first discuss the generation statement.

\begin{prop}\label{gen0}
The categories $p_{\dd*}q_{\dd}^*\left(\mathbb{M}_A(\delta)\right)$ for partitions $A\in S^d_w(\delta)$ generate $D^b(\X(d))_w$. 
\end{prop}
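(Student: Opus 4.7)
My plan is to reduce, via Proposition \ref{prop21}, to showing that each vector bundle $\mathcal{O}_{\X(d)} \otimes \Gamma_{G(d)}(\chi)$ with $\chi \in M(d)^+$ and $\langle 1_d, \chi\rangle = w$ lies in the triangulated subcategory of $D^b(\X(d))_w$ generated by $\bigcup_{A \in S^d_w(\delta)} p_{\dd*}q_{\dd}^*\,\mathbb{M}_A(\delta)$. I will induct on the invariant $(r,p)(\chi + \rho + \delta)$ equipped with the lexicographic order. The base case is immediate: if $\chi + \rho + \delta \in \frac{1}{2}\WW$, then by definition $\Gamma(\chi) \in \mathbb{M}(d; \delta)_w = \mathbb{M}_A(\delta)$ for the length-one partition $A = ((d,w)) \in S^d_w(\delta)$, with $p_{\dd*}q_{\dd}^* = \operatorname{id}$.

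For the inductive step, suppose $r = r(\chi + \rho + \delta) > \frac{1}{2}$. By Proposition \ref{prop} there is an antidominant cocharacter $\lambda$ of $SG(d)$ with Levi $L = \times_{i=1}^k G(d_i)$ such that $\chi + \rho + \delta \in F_r(\lambda)^{\mathrm{int}}$. I will apply Proposition \ref{bbw} to $\chi$ and $\lambda$: the resulting spectral sequence converges to $p_{\lambda*}q_\lambda^*(\mathcal{O}_{\X(d)^\lambda} \otimes \Gamma_L(\chi))$ with $E_1$-terms $\mathcal{O}_{\X(d)} \otimes \Gamma((\chi - \sigma_I)^+)$ in cohomological degree $|I| - \ell(I)$, indexed by subsets $I \subset \{\beta \mid \langle \lambda, \beta\rangle < 0\}$. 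The $I = \emptyset$ contribution is exactly $\mathcal{O}_{\X(d)} \otimes \Gamma(\chi)$ in degree zero, so $\mathcal{O}_{\X(d)} \otimes \Gamma(\chi)$ sits in a filtration whose other successive quotients are the $p_{\lambda*}q_\lambda^*$ term together with the summands $\Gamma((\chi - \sigma_I)^+)$ for $I \neq \emptyset$. By Proposition \ref{rgoesdown} combined with Proposition \ref{rem1}(b), each such nonempty-$I$ term has strictly smaller $(r,p)$-invariant and is therefore in the target subcategory by the inductive hypothesis.

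It remains to place $p_{\lambda*}q_\lambda^*(\mathcal{O}_{\X(d)^\lambda} \otimes \Gamma_L(\chi))$ in the target subcategory. Writing $\chi = \sum_i \chi_i$ with $\chi_i \in M(d_i)^+$, one has $\Gamma_L(\chi) = \bigotimes_i \Gamma_{G(d_i)}(\chi_i)$, and I will recursively apply the same argument on each factor $\X(d_i)$ with the shift $\delta_{Ai}$ from \eqref{deltaai}, where $A = A_\chi$. The main obstacle is the compatibility of these shifts: one needs each $\chi_i + \rho_{d_i} + \delta_{Ai}$ to have strictly smaller $r$-invariant than $r(\chi + \rho + \delta)$, so that the recursion terminates. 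This should follow from the structure of the standard form \eqref{godown5}: in the decomposition $\chi + \rho + \delta = -\sum_{j \in T} r_j N_j + \psi$ the coefficients $r_j$ strictly decrease along root-to-leaf paths in the tree $T$, and the $\delta_{Ai}$ of \eqref{deltaai} are designed to absorb precisely the contribution $-rN^{\lambda>0} - \rho^{\lambda<0} - \delta$ at the top node, leaving residual weights on each Levi factor matching the standard form one level deeper in $T$. Iterating down the tree lands in the base case and produces precisely the partition $A_\chi \in S^d_w(\delta)$, completing the argument in the main case $Q = Q^d$ connected, $Q \neq Q^o$. The remaining special cases in Subsection \ref{swe} follow directly from their explicit definitions of $S^d_w(\delta)$, reducing the disconnected case to a product over connected components.
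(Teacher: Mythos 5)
Your overall strategy matches the paper's: induction on the $(r,p)$-invariant, with Proposition \ref{prop21} reducing to generators, Proposition \ref{bbw} producing the spectral sequence, and Propositions \ref{rem1} and \ref{rgoesdown} forcing the nonempty-$I$ error terms to drop in $(r,p)$. The base case and the handling of $Q^o$ and disconnected quivers are also aligned.

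The genuine gap is in your treatment of $p_{\lambda*}q_\lambda^*\left(\mathcal{O}_{\X(d)^\lambda}\otimes \Gamma_L(\chi)\right)$. You propose going one level down (a single antidominant $\lambda$ with $\chi+\rho+\delta\in F_r(\lambda)^{\text{int}}$) and then recursing on each Levi factor $\X(d_i)$ with shift $\delta_{Ai}$. This rests on the claim that ``the $\delta_{Ai}$ of \eqref{deltaai} are designed to absorb precisely the contribution $-rN^{\lambda>0}-\rho^{\lambda<0}-\delta$ at the top node,'' but that is not what \eqref{weightA}--\eqref{deltaai} say: $\delta_{Ai}$ is the $M(d_i)$-component of $\sum_{j\in T}r_jN_j+\rho^{\lambda_{\dd}<0}+\delta$, which includes the contributions of the \emph{entire} tree $T$, not just the root, and uses the cocharacter $\lambda_{\dd}$ attached to the full (leaf-level) partition, not to your single top-level $\lambda$. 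With the residual shift that actually arises after a single application of Corollary \ref{godown}, the weights $\chi_i+\rho_i+(\text{residual shift})_i$ need not lie in $\frac{1}{2}\WW(d_i)$, so your ``recursion on each factor'' is not yet in the base case, and the termination claim --- which you yourself flag as the main obstacle --- is left as a gesture (``this should follow from the structure of the standard form''). You would also need to argue that the partitions produced by the nested recursions compose to elements of $S^d_w(\delta)$, which is a combinatorial point you do not address.

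The paper sidesteps all of this by invoking the standard form \eqref{godown5} in one shot: it writes $\chi+\rho+\delta=-\sum_{j\in T}r_jN_j+\psi$ with $\psi\in\frac{1}{2}\WW(L)$ where $L$ is the Levi of the \emph{full} tree, checks directly that $\chi_i+\rho_i+\delta_{Ai}=\psi_i+v\widetilde{\tau_i}\in\frac{1}{2}\WW(d_i)$, so that $\Gamma_L(\chi)$ is already in $\mathbb{M}_A(\delta)$ with $A=A_\chi\in S^d_w(\delta)$, and then applies Proposition \ref{bbw} once (for $\lambda_{\dd}$). No inner recursion is needed; only the outer $(r,p)$-induction remains, driven by the error terms. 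If you replace your single step of Corollary \ref{godown} by the full expansion \eqref{godown5} and verify directly that the $\chi_i$ land in $\frac{1}{2}\WW(d_i)$, your argument collapses into the paper's proof.
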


\begin{proof}
We may assume that $Q=Q^d$ is connected.

For $Q^o$, let $A=(1,w_i)_{i=1}^d$ with $w_1\geq\cdots\geq w_d$. Let $\chi_A:=\sum_{i=1}^d w_i\beta_i$.
We have that \[\mathbb{M}(1)_w\cong \mathcal{O}_{\X(1)}(w)\]in $D^b\text{Coh}(\X(1))\cong D^b\text{Rep}(\mathbb{C}^*).$ Then $p_{\dd*}q_{\dd}^*\left(\mathbb{M}_A(\delta)\right)$ is the category generated by $\Gamma_{GL(d)}(\chi)$. These vector bundles generate $\text{Coh}(\X(d))\cong \text{Rep}(GL(d))$ and thus $D^b(\X(d))$. 

We next assume that $Q$ is different from $Q^o$.
Let $\chi$ be a dominant weight for $G(d)=:G$ and let $w=\langle 1_d, \chi\rangle$.
It is enough to show that the sheaf $\OO_{\X(d)}\otimes \Gamma_G(\chi)$ is generated by the categories $p_{\dd*}q_{\dd}^*\left(\mathbb{M}_A(\delta)\right)$ for partitions $A\in S^d_w(\delta)$. 
If $r(\chi+\rho+\delta)\leq\frac{1}{2}$, then $\OO_{\X(d)}\otimes \Gamma_G(\chi)$ is in $\mathbb{M}(d)_w$. Assume that $r(\chi+\rho+\delta)>\frac{1}{2}$. The set of $(r,p)$-invariants less than a fixed pair is finite.
We use induction on $(r,p)(\chi+\rho+\delta)$.
By Proposition \ref{prop:decompchi}, we have \[\chi+\rho+\delta=-\sum_{j\in T} r_jN_j+\psi,\]
where the sum is taken over a path of partitions $T$ of antidominant cocharacters, $r_j>\frac{1}{2}$, 
$\psi \in \frac{1}{2}\mathbb{W}(L)$, and $L\cong\times_{i=1}^k G(d_i)$. Let $\lambda=\lambda_{\dd}$.
The weight $\psi$ is $L$-dominant. Let $\langle 1_d, \psi\rangle=:v$, then $v=w+\langle 1_d, \delta_d\rangle$.

Write $\chi=\sum_{i=1}^k\chi_i$ in $M(d)_{\mathbb{R}}\cong \bigoplus_{i=1}^k M(d_i)_{\mathbb{R}}$. The weight $\chi$ is in $M(d)^+$, so $\chi_i\in M(d_i)^+$ for every $1\leq i\leq k$. Let $w_i=\langle 1_{d_i}, \chi_i\rangle$. 
Write $\tau_d=\sum_{i=1}^k \widetilde{\tau_{i}}$, then $\widetilde{\tau_{i}}$ is a multiple of $\tau_{d_i}$. 
Then $\psi=\sum_{i=1}^k \left(\psi_i+v\widetilde{\tau_{i}}\right)$, where $\psi_i\in\frac{1}{2}\WW(d_i)_0$. Consider weights $\delta_{i}\in M(d_i)_{\mathbb{R}}$ by
\begin{equation*}
    \rho^{\lambda<0}+\delta+\sum_{j\in T}r_jN_j=\sum_{i=1}^k \delta_{i}\text{ in }M(d)_{\mathbb{R}}\cong \bigoplus_{i=1}^k M(d_i)_{\mathbb{R}}.
    \end{equation*}
We claim that actually $\delta_i\in M(d_i)^{\mathfrak{S}_{d_i}}_\mathbb{R}$. It suffices to check that the weight $\rho^{\lambda<0}+\delta+\sum_{j\in T}r_jN_j$ is $\times_{i=1}^k \mathfrak{S}_{d_i}$-invariant. 
Note that $\delta$ is $\mathfrak{S}_d$-invariant and $\rho^{\lambda<0}$ is a linear combination of $\mathfrak{gl}(d_i)$, and thus also $\times_{i=1}^k \mathfrak{S}_{d_i}$-invariant. The claim for $\sum_{j\in T} r_jN_j$ follows from Proposition \ref{propnj}.
    
For $1\leq i\leq k$, we have that
\[\chi_i+\rho_i+\delta_i=\psi_i+v\widetilde{\tau_{i}}\in \frac{1}{2}\WW(d_i).\] 
Then $A=A_\chi:=(d_i,w_i)_{i=1}^k$ is in $S^d_w(\delta)$, the weights $\delta_{Ai}$ are $\delta_i$ for $1\leq i\leq k$, and
so $\mathcal{O}_{\X(d_i)}\otimes\Gamma_{G(d_i)}(\chi_i)\in \mathbb{M}(d_i; \delta_i)_{w_i}$.  
%Let $\chi_i\in M(d_i)^+$ for $1\leq i\leq k$ be such that \[\chi=\sum_{i=1}^k\chi_i\] and let $w_i=\langle 1_{d_i}, \chi_i\rangle$. By Proposition \ref{rem2}, the partition $A=(d_i, w_i)_{i=1}^k$ is in $S^d_w$.
%Write $\rho=\rho^{\lambda<0}+\sum_{i=1}^k\rho_{d_i}$, where $\rho^{\lambda<0}$ is half the sum of positive roots which pair negatively with $\lambda$. Let $v_i$ be integers for $1\leq i\leq k$ such that
%\[-\sum_{j\in T} r_jN^{\lambda_j>0}-\rho^{\lambda<0}=\sum_{i=1}^k v_i\tau_{d_i}.\] For $1\leq i\leq k$, define the weight $\chi_i:=\psi_i+v_i\tau_{d_i}-\rho_{d_i}$ of dimension $d_i$ and weight $w_i$. Then there is a decomposition\[\chi=\sum_{i=1}^k\chi_i\] in $M(d)_{\mathbb{R}}\cong \bigoplus_{i=1}^k M(d_i)_{\mathbb{R}}$. The weight $\chi$ is in $M(d)^+$, so $\chi_i\in M(d_i)^+$ for every $1\leq i\leq k$. Further,  $\chi_i+\rho_{d_i}\in\frac{1}{2}\mathbb{W}(d_i)_{w_i}$.Indeed, we have that$\chi_i+\rho_{d_i}=\psi_i+v_i\beta_{d_i}$ and $\psi_i\in \frac{1}{2}\mathbb{W}(d_i)$, so $\psi_i+v_i\beta_{d_i}\in \frac{1}{2}\mathbb{W}(d_i)$ by Proposition \ref{rem1}.By Proposition \ref{rem2}, the set of pairs $A:=(d_i, w_i)_{i=1}^k$ is in $S^d_w$.

%Let $\lambda$ be the antidominant cocharacter associated to the partition $\dd=(d_i)_{i=1}^k$. 
Consider the complex $\mathcal{O}_{\X(d)^\lambda}\otimes \Gamma_L(\chi)$ in $\mathbb{M}_A$. By Proposition \ref{bbw}, there is a spectral sequence with terms $\OO_{\X(d)}\otimes \Gamma_G\left(\left(\chi-\sigma_I\right)^+\right)$ converging to \[p_{\dd*}q_{\dd}^*\left(\mathcal{O}_{\X(d)^\lambda}\otimes \Gamma_L(\chi)\right),\] where $\sigma_I$ is a sum of weights in $I\subset \{\beta|\,\langle \lambda,\beta\rangle<0\}$.
By Propositions \ref{rem1} and \ref{rgoesdown}, we have that \[(r,p)\left(\left(\chi+\rho-\sigma_I+\delta\right)^+\right)=(r,p)(\chi+\rho-\sigma_I+\delta)<(r,p)(\chi+\rho+\delta)\] for $I$ a non-empty set. The conclusion thus follows.
\end{proof}

Before we discuss the orthogonality statement, we note a preliminary computation.

\begin{prop}\label{zerodc}
Let $G$ be a reductive group, $V$ a $G$-representation with origin $0$, $\lambda$ a cocharacter, and $w\in \mathbb{Z}$. Let $\X=V/G$ be the quotient stack. Assume that $\mathcal{F}$ and $\mathcal{E}$ are in $D^b\left(\X^{\lambda\geq 0}\right)$ such that $\mathcal{F}|_0$ has $\lambda$-weights $> w$ and $\mathcal{E}|_0$ has $\lambda$-weights $\leq w$. Then 
\begin{equation}\label{extzero}
    \text{Ext}^a_{\X^{\lambda\geq 0}}\left(\mathcal{F}, \mathcal{E}\right)=0\end{equation} for any $a\in\mathbb{Z}$. 
\end{prop}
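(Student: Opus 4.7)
The plan is to reduce the claim to a representation-theoretic vanishing: for a $P_\lambda$-representation $M$ with all $\lambda$-weights strictly negative, the derived invariants $R(-)^{P_\lambda}(M)$ vanish. Here $P_\lambda=\{g\in G : \lim_{t\to 0}\lambda(t)g\lambda(t)^{-1}\text{ exists}\}$ is the parabolic associated to $\lambda$, with unipotent radical $U_\lambda$ and Levi $L_\lambda$. Since $\X^{\lambda\geq 0}=V^{\lambda\geq 0}/P_\lambda$ with $V^{\lambda\geq 0}$ affine, derived global sections commute with derived $P_\lambda$-invariants, so
\[\Ext^a_{\X^{\lambda\geq 0}}(\mathcal{F},\mathcal{E})=R^a(-)^{P_\lambda}\bigl(\Gamma(V^{\lambda\geq 0}, R\lHom(\mathcal{F},\mathcal{E}))\bigr).\]

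First I would reduce, via an analog of Proposition \ref{prop21} for the parabolic $P_\lambda$ acting on $V^{\lambda\geq 0}$ together with a devissage on the $\lambda$-weights at the origin, to the case $\mathcal{F}=\OO_{\X^{\lambda\geq 0}}\otimes \Gamma$ and $\mathcal{E}=\OO_{\X^{\lambda\geq 0}}\otimes \Gamma'$ for irreducible $P_\lambda$-representations $\Gamma, \Gamma'$. These factor through $L_\lambda$ and carry definite $\lambda$-weights $w_\Gamma>w$ and $w_{\Gamma'}\leq w$. Then
\[\Gamma(V^{\lambda\geq 0}, R\lHom(\mathcal{F},\mathcal{E})) \cong \mathbb{C}[V^{\lambda\geq 0}]\otimes \Gamma^\vee\otimes\Gamma',\]
and since $V^{\lambda\geq 0}$ has $\lambda$-weights $\geq 0$, the polynomial ring has $\lambda$-weights $\leq 0$; combined with weights $-w_\Gamma<-w$ from $\Gamma^\vee$ and $w_{\Gamma'}\leq w$ from $\Gamma'$, the whole $P_\lambda$-module has all $\lambda$-weights strictly $<0$.

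For the vanishing of derived invariants on such a module $M$, I would use the Hochschild--Serre spectral sequence for the extension $1\to U_\lambda\to P_\lambda\to L_\lambda\to 1$, which reduces $R^\cdot(-)^{P_\lambda}$ to $(-)^{L_\lambda}$ (exact by reductivity of $L_\lambda$) applied to $H^\cdot(\mathfrak{u}_\lambda, M)$, computed in characteristic zero by the Chevalley--Eilenberg complex $\wedge^\cdot \mathfrak{u}_\lambda^\vee \otimes M$. Since $\mathfrak{u}_\lambda=\mathfrak{g}^{\lambda>0}$ has $\lambda$-weights $>0$, its dual has weights $<0$, so the full complex has $\lambda$-weights $<0$. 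As $\lambda(\mathbb{C}^*)$ is central in $L_\lambda$, the $L_\lambda$-invariants are trapped in the $\lambda$-weight $0$ component, which vanishes; hence $\Ext^a(\mathcal{F},\mathcal{E})=0$ for every $a$.

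The main obstacle I anticipate is the reduction to vector bundles: the hypothesis concerns the derived restriction $Li_0^*$, whose cohomology can cancel individual $\lambda$-weights, so an arbitrary free resolution need not have each term with $\lambda$-weight $>w$. I expect this to be handled by working with the full triangulated subcategory of $D^b(\X^{\lambda\geq 0})$ of objects whose restriction to the origin has weights $>w$ (respectively $\leq w$), and showing it is generated by $\OO\otimes \Gamma$ with $\Gamma$ irreducible of $\lambda$-weight in the appropriate range, using weight-filtered resolutions of $P_\lambda$-representations (which are available in characteristic zero because each $P_\lambda$-representation is a successive extension of $L_\lambda$-representations of definite $\lambda$-weight).
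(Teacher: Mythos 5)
Your proof is correct and ultimately rests on the same idea as the paper's argument: the $\lambda$-weights of the relevant invariant space are strictly negative, so the weight-zero part vanishes. But the routes through the details differ, and each clarifies something the other leaves implicit.

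The paper's sketch reduces everything to $\mathbb{C}^*$-invariants: since $P_\lambda$-invariants are contained in $\mathbb{C}^*$-invariants, it passes to $\mathcal{Z}=V^{\lambda\geq 0}/\mathbb{C}^*$ and does the explicit degree-$0$ computation $\left(\mathbb{C}[V^{\lambda\geq 0}](v-u)\right)^{\mathbb{C}^*}=0$ for $v\leq w<u$, while asserting that the vanishing for $a\neq 0$ ``is clear'' once $\mathcal{F}$, $\mathcal{E}$ are vector bundles. Your argument instead computes $R\Gamma$ on the quotient stack $V^{\lambda\geq 0}/P_\lambda$ directly, via the Hochschild--Serre spectral sequence for $1\to U_\lambda\to P_\lambda\to L_\lambda\to 1$ and the Chevalley--Eilenberg complex for $\mathfrak{u}_\lambda$. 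This has the merit of actually explaining the ``clear for $a\neq 0$'' step: the terms $\wedge^q\mathfrak{u}_\lambda^\vee\otimes M$ have strictly negative $\lambda$-weight for all $q$ once $M$ does, so the $L_\lambda$-invariants (trapped in weight $0$ since $\lambda(\mathbb{C}^*)$ is central in $L_\lambda$) vanish in every cohomological degree. The paper's version is shorter but relies on the reader already knowing the higher group cohomology of $P_\lambda$ is controlled in this way; your version makes it explicit at the cost of length.

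The one place both arguments lean on external input is the reduction you flag at the end: the hypothesis constrains only the derived restriction $\mathcal{F}|_0$, so you must know that the full subcategory $\{\mathcal{F}: \mathcal{F}|_0 \text{ has }\lambda\text{-weights}>w\}$ is generated by $\OO\otimes\Gamma$ with $\Gamma$ of $\lambda$-weight $>w$ (and similarly for $\leq w$). This is exactly the baric decomposition of $D^b(\X^{\lambda\geq 0})$ established in Halpern-Leistner's paper (Lemma 3.15 and its surroundings), which is also what Amplification 3.18 --- the paper's citation --- packages. Your closing paragraph correctly identifies what is needed and is right that it is available in characteristic zero from the weight filtration on $P_\lambda$-representations, but strictly speaking it is a cited result in both treatments rather than something either proof establishes from scratch. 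With that caveat, your proposal is a valid, and in fact more self-contained, proof of the proposition.
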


\begin{proof}
This statement is contained in \cite[Amplification 3.18]{hl}.
We briefly explain a proof. We can assume that $\mathcal{F}$ and $\mathcal{E}$ are locally free sheaves.
The vanishing \eqref{extzero} is then clear for $a\neq 0$. 

Let $\mathcal{Z}=V^{\lambda\geq 0}/\mathbb{C}^*$. 
For $a=0$, it suffices to check the statement for locally free sheaves on $\mathcal{Z}$.
We can assume that $\mathcal{F}=\OO_{\mathcal{Z}}(u)$ and $\mathcal{E}=\OO_{\mathcal{Z}}(v)$ with $v\leq w<u$. 
We have that:
\[\text{Hom}_{\mathcal{Z}}
\left(\OO_{\mathcal{Z}}(u), \OO_{\mathcal{Z}}(v)\right)\cong
\left(\mathbb{C}\left[\left(V^{\lambda\geq 0}\right)^{\vee}\right](v-u)\right)^{\mathbb{C}^*}=0.\] 
\end{proof}

We next discuss the orthogonality statement.

\begin{prop}\label{orth} 
Recall the set $O\subset U^d_w(\delta)\times U^d_w(\delta)$ from Subsection \ref{compadm}. 
Consider partitions $A=(d_i, w_i)_{i=1}^k$, $B=(e_i, v_i)_{i=1}^l$ in $S^d_w(\delta)$ with $(A,B)\in O$ and with associated antidominant cocharacters $\lambda_A$ and $\lambda_B$. Denote by $\X^A=\X(d)^{\lambda_A}$ and $\X^B=\X(d)^{\lambda_B}$.
Let $\mathcal{F}_A, \mathcal{F}'_A\in 
\mathbb{M}_A$, $\mathcal{F}_B \in
\mathbb{M}_B$, and $a\in\mathbb{Z}$. 
Then
\begin{align*}
    \text{Ext}^a_{\X(d)}\left(p_{\dd*}q_{\dd}^*\mathcal{F}_A, p_{\ee*}q_{\ee}^*\mathcal{F}_B\right)&=0,\\
    \text{Ext}^a_{\X(d)}\left(p_{\dd*}q_{\dd}^*\mathcal{F}_A, p_{\dd*}q_{\dd}^*\mathcal{F}'_A\right)&\cong\text{Ext}^a_{\X^A}\left(\mathcal{F}_A, \mathcal{F}'_A\right).
\end{align*}
\end{prop}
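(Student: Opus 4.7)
The plan is to reduce both Ext computations to weight-vanishing statements on fixed loci, where Proposition \ref{zerodc} applies directly. The starting point is the adjunction $p_{\dd *}\dashv p_{\dd}^!$, valid because $p_{\dd}:\X(\dd)\to\X(d)$ is representable and proper (a twisted flag bundle over its image). Combined with the fact that $q_{\dd}:\X(\dd)\to \X^A$ is a vector bundle stack (so $q_{\dd *}q_{\dd}^*$ is the identity up to a twist by $\det\mathbb{L}_{R(d)}^{\lambda_A<0}|_{R(d)^{\lambda_A}}$), one obtains
$$\Ext^a_{\X(d)}\!\bigl(p_{\dd *}q_{\dd}^*\mathcal{F}_A,\,p_{\ee *}q_{\ee}^*\mathcal{F}_B\bigr)\cong \Ext^a_{\X^A}\!\bigl(\mathcal{F}_A,\,q_{\dd *}p_{\dd}^!p_{\ee *}q_{\ee}^*\mathcal{F}_B\bigr).$$
I would then analyze the inner functor via base change along the derived fiber product $\X(\dd)\times^h_{\X(d)}\X(\ee)$, which parametrizes a representation of dimension $d$ equipped with two compatible filtrations of types $\dd$ and $\ee$. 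By a standard Bruhat-type decomposition, this fiber product stratifies by common refinements indexed by cocharacters $\mu$ with $\mu\geq\lambda_A$ and $\mu\geq\lambda_B$, and each stratum is a vector bundle stack over the common finer fixed locus $\X(d)^\mu$. Base change then expresses the right-hand side as a sum (or limit) of contributions indexed by these $\mu$.

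On each stratum, one restricts $\mathcal{F}_A$ and $\mathcal{F}_B$ to $\X(d)^\mu$ and compares their $\mu$-weights at the origin. The hypotheses defining $\MM_A$ and $\MM_B$, combined with the standard form \eqref{godown5} for the weights of their generators, translate via Propositions \ref{prop} and \ref{subse} into sharp weight bounds on $\langle\mu,\mathcal{F}_A|_0\rangle$ and $\langle\mu,\mathcal{F}_B|_0\rangle$. The combinatorial heart of the argument is to verify that the hypothesis $(A,B)\in O$ forces, for every stratum $\mu$, these two weight ranges to be separated in the manner required by Proposition \ref{zerodc}. When $(A,B)\in O$ this can be arranged either by finding an $r_i$ for which the $A$-tree has strictly larger radius, or (if all radii agree up to step $c$) by exploiting the strict comparison $\lambda_{A,c}>\lambda_{B,c}$ on the corresponding Levi factor; both cases produce the required strict inequality. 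This gives the first vanishing.

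For the fully faithful claim, the same strata decomposition applies with $B=A$. Every stratum with $\mu>\lambda_A$ is killed by the same weight argument: on a strictly finer fixed locus, the defining inequality for $\MM_A$ becomes strict for some component of $\mu$, so Proposition \ref{zerodc} applies. The only surviving contribution comes from the open stratum $\mu=\lambda_A$, where the derived self-intersection of $\X(\dd)$ in $\X(d)$ in the $\lambda_A$-attracting directions produces a Koszul-type shift by $\det\mathbb{L}_{R(d)}^{\lambda_A>0}|_{R(d)^{\lambda_A}}$ that cancels precisely against the twist carried by $p_{\dd}^!$. The residual term is $\Ext^a_{\X^A}(\mathcal{F}_A,\mathcal{F}'_A)$, which is the desired isomorphism.

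The main obstacle will be the combinatorial matching between the order defining $O$ in Subsection \ref{compadm} — a comparison of the tuples $(r_{A,i},\lambda_{A,i})$ read off from the standard form — and the precise $\mu$-weight inequalities needed on each stratum of the derived fiber product. This is expected to require an induction processing the tree $T$ in \eqref{godown5} in decreasing order of the $r$-invariants, reducing at each node to a smaller-dimension instance via the inductive hypothesis and the strict $r$-decrease supplied by Proposition \ref{rgoesdown}. The disconnected and $Q^o$ cases from Subsection \ref{swe} should reduce formally to the connected non-$Q^o$ case by taking tensor products of the corresponding semi-orthogonal pieces.
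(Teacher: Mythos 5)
Your approach is genuinely different from the paper's, but it has a real gap. The paper proves both statements by a single adjunction $p_{\ee}^*\dashv p_{\ee*}$ (respectively $p_{\dd}^*\dashv p_{\dd*}$), reducing to an Ext computation on the attracting stack $\X(d)^{\lambda_B\geq 0}$, and then resolving $p_{\dd*}q_{\dd}^*\mathcal{F}_A$ via the Borel--Weyl--Bott spectral sequence of Proposition \ref{bbw}: the terms are explicit vector bundles $\mathcal{O}_{\X(d)}\otimes\Gamma_G((\chi_A-\sigma_I)^+)$ whose $(r,p)$-invariants and hence $\mu_B$-weights are controlled by Propositions \ref{rem1} and \ref{rgoesdown}. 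The single inequality $\langle\mu_B,(\chi_A-\sigma_I)^+\rangle>\langle\mu_B,\chi_B\rangle$ then feeds directly into Proposition \ref{zerodc}. This sidesteps any geometry of $\X(\dd)\times_{\X(d)}\X(\ee)$.

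Your route via $p_{\dd*}\dashv p_{\dd}^!$ plus base change and a Bruhat-type stratification of the derived fiber product is not wrong in principle, but two points are under-argued. First, the strata of $\X(\dd)\times^h_{\X(d)}\X(\ee)$ are not indexed simply by cocharacters $\mu$ with $\mu\geq\lambda_A$ and $\mu\geq\lambda_B$; they are indexed by dimension matrices $(a_{ij})$ with $\sum_j a_{ij}=d_i$, $\sum_i a_{ij}=e_j$ (equivalently, double cosets in the Weyl group), with finer Levi factors $\prod_{i,j}G(a_{ij})$, and the base change/excess-intersection corrections on each stratum need to be computed explicitly before weight bounds can be extracted. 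Second, and more seriously, you identify ``the combinatorial heart'' --- verifying that $(A,B)\in O$ forces weight separation on every stratum --- as the main obstacle, but you never carry it out; that verification \emph{is} the content of the proposition. In the paper this step is reduced to a single inequality per BWB term, which is what makes the argument close; your sketch leaves the analogous step entirely open. The Koszul cancellation you invoke for the fully faithful claim is also asserted rather than proved, whereas the paper gets the full faithfulness for free from the same BWB expansion by noting that the nonempty-$I$ terms vanish. If you want to pursue your route you would essentially need to re-derive the content of Proposition \ref{bbw} stratum-by-stratum, which is more work for no additional generality.
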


\begin{proof}
We may assume that $Q=Q^d$ is connected.

If $Q=Q^o$, then $d_i=e_i=1$. Let $\chi_A:=\sum_{i=1}^k w_i\beta_i$ and $\chi_B:=\sum_{i=1}^k v_i\beta_i$. The categories $\text{Coh}(\X(d))\cong \text{Rep}(G(d))$ and $\text{Coh}(\X(1)^{\times d})\cong \text{Rep}(T(d))$ are semisimple, so we can assume that $a=0$. Let $G:=G(d)$ and $T:=T(d)$. We can also assume that $\mathcal{F}_A\cong\mathcal{F}'_A\cong \Gamma(\chi_A)$ and $\mathcal{F}_B\cong \Gamma(\chi_B)$. Then
\begin{align*}
    \text{Hom}_{\text{Rep(G)}}(\Gamma_G(\chi_A), \Gamma_G(\chi_B))&\cong 0,\\
    \text{Hom}_{\text{Rep(G)}}(\Gamma_G(\chi_A), \Gamma_G(\chi_A))&\cong \text{Hom}_{\text{Rep(T)}}(\Gamma_T(\chi_A), \Gamma_T(\chi_A))\cong \mathbb{C}.
\end{align*}

We assume next that $Q$ is not $Q^o$.
Denote the Levi group of $\lambda_A$ by $L$, the Levi group of $\lambda_B$ by $H$, and let $G:=G(d)$.
It is enough to prove the result for generators of the above categories:
\begin{align*}
    \mathcal{F}_A&=\Gamma_L(\chi_A)\otimes\OO_{\X^A},\\
    \mathcal{F}'_A&=\Gamma_L(\chi'_A)\otimes\OO_{\X^A},\\
    \mathcal{F}_B&=\Gamma_H(\chi_B)\otimes\OO_{\X^B}.
\end{align*}
Let $\mu_A$ and $\mu_B$ be antidominant cocharacters such that
\begin{align*}
    \chi_A+\rho+\delta, \chi'_A+\rho+\delta&\in F_r(\mu_A)^{\text{int}}\subset F_r(\lambda_A),\\
    \chi_B+\rho+\delta&\in F_s(\mu_B)^{\text{int}}\subset F_r(\lambda_A).
\end{align*}
Note that $\mu_A\geq \lambda_A$ and $\mu_B\geq \lambda_B$ in the notation of Subsection \ref{subseccompa}, see Proposition \ref{prop}. 
% Indeed, we choose $\mu_A$ with $F_r(\mu_A)^{\mathrm{int}}\subset F_r(\lambda_A)$, which implies that $R(d)^{\mu_A}\subset R(d)^{\lambda_A}$, and thus $\mu_A\geq \lambda_A$. 
In particular, if $\beta$ is a weight such that $\langle \lambda_A, \beta\rangle>0$, then $\langle \mu_A, \beta\rangle>0$ and similarly for $B$.

Assume for simplicity that $s>r$ or $s=r$ and $\mu_A$ is not $\geq\mu_B$, which corresponds to the case $c=1$ in the definition of the comparison set $O$ from Subsection \ref{compadm}. The general case follows similarly.

%if $s=r$ and $\lambda=\mu$, we need to consider the first characters $\lambda_i$ and $\mu_i$ in the standard forms of $\chi_A$ and $\chi_B$ that differ, and the argument is the same for them.
For the first statement, by adjunction one needs to prove
\begin{equation}\label{parta}
    \text{Ext}^a_{\X(d)^{\lambda_B\geq 0}}\left(p_{\ee}^*p_{\dd*}q_{\dd}^*\mathcal{F}_A, q_{\ee}^*\mathcal{F}_B\right)=0.
    \end{equation}
We have that $\chi_B+\rho+\delta\in F_s(\mu)^{\text{int}}$. By Propositions \ref{bbw} and \ref{rgoesdown}, there is a spectral sequence with vector bundles $\OO_{\X(d)}\otimes \Gamma_G\left(\left(\chi_A-\sigma_I\right)^+\right)$ converging to $p_{\dd*}q_{\dd}^*\mathcal{F}_A$ with $\chi_A+\rho-\sigma_I+\delta\in r\mathbb{W}$, where $\sigma_I$ is the sum of weights in $I\subset \{\beta|\,\langle \lambda_A, \beta\rangle<0\}$.
Further, if $\left(\chi_A+\rho-\sigma_I+\delta\right)^+ \in F_r(\omega)^{\text{int}}$, then $\omega\leq \lambda_A$. 
For any such set $I$, we have that either $s>r$ or $s=r$ and $\omega$ is not $\geq \mu_B$, and so
\[\big\langle \mu_B,  \left(\chi_A-\sigma_I\right)^+\big\rangle >\langle \mu_B, \chi_B\rangle.\] The statement in \eqref{parta} follows from Proposition \ref{zerodc}.

For the second statement, we need to show that
\begin{equation}\label{partb}
    \text{Ext}^a_{\X(d)^{\lambda_A\geq 0}}\left(p_{\dd}^*p_{\dd*}q_{\dd}^*\mathcal{F}_A, q_{\dd}^*\mathcal{F}'_A\right)\cong \text{Ext}^a_{\X^A}\left(\mathcal{F}_A, \mathcal{F}'_A\right).
    \end{equation}
  We have that $\chi'_A+\rho+\delta\in F_r(\mu_A)^{\text{int}}$. We use the notations from the previous paragraph. By Proposition \ref{rgoesdown}, 
  if $\left(\chi+\rho-\sigma_I+\delta\right)^+ \in F_r(\omega)^{\text{int}}$, then $\omega\leq \lambda_A$ and equality holds only for $I$ empty. 
  For any non-empty $I$, we have that
\[\big\langle \mu_A,  \left(\chi_A-\sigma_I\right)^+\big\rangle >\langle \mu_A, \chi'_A\rangle,\] and the statement in \eqref{partb} follows from Proposition \ref{zerodc}.
%We have that $q_{\dd}^*B\in F_r(\lambda)^{\text{int}}$ and, by Proposition \ref{rgoesdown}, $p_{\dd*}q_{\dd}^*A\in r\overline{\mathbb{W}}$ and it touches the face $F_r(\lambda)^{\text{int}}$ only for the partial sum $\sigma=0$, and the conclusion for part (a) follows.
\end{proof}

%\begin{thm}\label{them3}Assume that there are strictly stable representations of dimension $d$ for any $d\in\mathbb{N}^I$. Then Theorem \ref{them1} holds for $W=0$. \end{thm}

\begin{proof}[Proof of Theorem \ref{them1}]
Assume $W=0$. Then the semi-orthogonal decomposition follows from Propositions \ref{gen0} and \ref{orth}. 

Assume $W$ is arbitrary.
Let $(d_i,w_i)_{i=1}^k$ be a partition of $(d,w)$.
Let $\mathbb{D}$ be a subcategory of $\otimes_{i=1}^k D^b\left(\X(d_i)\right)_{w_i}$ on which $p_{\dd*}q_{\dd}^*$ is fully faithful. Then 
\begin{multline*}
    \text{MF}\left(p_{\dd*}q_{\dd}^*\,\mathbb{D}, W_d\right)\cong p_{\dd*}\text{MF}\left(q^*_{\dd}\,\mathbb{D}, p_{\dd}^*W_d\right)\cong p_{\dd*}q_{\dd}^*\,\text{MF}\left(\mathbb{D}, \oplus_{i=1}^kW_{d_i}\right)\\
    \cong \text{MF}\left(\mathbb{D}, \oplus_{i=1}^kW_{d_i}\right)
    .\end{multline*} 
The semi-orthogonal decomposition for $\text{MF}(\X(d), W)$ follows from the semi-orthogonal decomposition for $W=0$ and \cite[Proposition 2.1]{P1}. The argument for $\text{MF}^{\text{gr}}(\X(d), W)$ is similar and follows from \cite[Proposition 2.2]{P1}.
\end{proof}

\subsection{The Jordan quiver}\label{sub42}
The Jordan quiver is the quiver with one vertex and one loop.
 Consider the Hall algebra for the Jordan quiver $J$ and zero potential:
\[\text{HA}(J,0):=\bigoplus_{d\in\mathbb{N}}D^b\left(\X(d)\right).\] 
%Let $\delta_d=0$ for all $d\in\mathbb{N}$.
%where $\X(d)\cong \mathfrak{gl}(d)/GL(d)$.
The categories $\mathbb{M}(d;\delta_d)_w$ are independent of $\delta_d$, so we drop it from the notation.
The category $\mathbb{M}(d)_{dv}\subset D^b\left(\X(d)\right)_{dv}$ is generated by the line bundles $\mathcal{O}_{\X(d)}(v)$ for $v\in\mathbb{Z}$. In particular, $\mathbb{M}(d)_{dv}\cong D^b\left(\text{pt}\right)$. The semi-orthogonal decomposition in Theorem \ref{them1} is
\[D^b\left(\mathcal{X}(d)\right)_w=\big\langle \otimes_{i=1}^k\mathbb{M}(d_i)_{d_iv_i}\big\rangle,\]
where the right hand side is after all partitions $A=(d_i, d_iv_i)_{i=1}^k$ of $(d,w)$ with $v_i>v_j$ for every $1\leq i<j\leq k$. The corresponding weight of $A$ is 
\[\chi_A=\sum_{i=1}^k v_i\nu_{d_i}\in M(d)^+.\]
%The weights $\delta_{Ai}$ from Theorem \ref{them1} are all zero. 
In $K_0(\X(d))$, we have that:
\[ \left[\mathcal{O}_{\X(1)}(v)\right]^d=d!\left[\mathcal{O}_{\X(d)}(v)\right].\]
In $\text{KHA}(J,0)_{\mathbb{Q}}$, the generators are $ K_0(\X(1))_{\mathbb{Q}}$. 

For a discussion of Theorem \ref{them1} for preprojective Hall algebras and for examples in those cases, see \cite{P4}.

\subsection{Relations in the Hall algebra}

\subsubsection{}\label{subsec431}
%Recall the definition of the tree $\mathcal{T}^d_w$ from Subsection \ref{treedw}.
Let $(d, w)\in\mathbb{N}^I\times\mathbb{Z}$ and $\delta\in M^{\mathfrak{S}_d}_{\mathbb{R}}$.
Consider the set \[B_{d,w}=\left\{\frac{1}{2}, (r,p)(A)\Big\rvert \,A\in S^d_w(\delta)\right\}.\] Let $\beta: B_{d,w}\to \mathbb{N}\setminus \{0\}$ be the order preserving bijection. 
Define a filtration $F^{\leq i}_{d,w}$ on $D^b(\X(d))_w$ containing $\mathbb{M}_A$ for $\beta\left((r,p)(A)\right)\leq i$.
Then $F^{\leq 1}_{d,w}:=\mathbb{M}(d; \delta)_w$. The filtration depends on $\delta$.
%Define a decreasing filtration $D^{\leq i}$ of the category $D^b(\X(d))_w$ starting in dimension $1$ such that $D^{\leq i}$ is generated by the categories $\mathbb{M}_A$ for $\ell(A)\leq i$. In particular, $D^{\leq 1}:=\mathbb{M}(d)_w$.
It is not clear whether we can choose $\delta\in M^{\mathfrak{S}_d}_{\mathbb{R}}$ for all $d\in\mathbb{N}^I$ such that these filtrations are compatible with multiplication. However, once we fix a partition $A=(d_i,w_i)_{i=1}^k$ in $S^d_w(\delta)$, the filtrations on $D^b(\X(d_i))_{w_i}$ for the weights $\delta_{Ai}$ are compatible in the following sense.
Consider integers $\ell_j\geq 1$ for $1\leq j\leq k$ with sum $\ell$ such that $\ell_a\geq 2$ for some $1\leq a\leq k$. Let $\alpha$ be an integer such that
\[F^{\leq \ell_1}_{d_1, w_1}\cdots  F^{\leq \ell_a}_{d_a,w_a}\cdots F^{\leq \ell_k}_{d_k, w_k} \subset F^{\leq \ell+\alpha}_{d,w}.\]
Then 
\[F^{\leq \ell_1}_{d_1, w_1}\cdots  F^{\leq \ell_a-1}_{d_a,w_a}\cdots F^{\leq \ell_k}_{d_k, w_k} \subset F^{\leq \ell-1+\alpha}_{d,w}.\]
We define the analogous filtrations for $\text{MF}$ and $\text{MF}^{\text{gr}}$ with the same property as above.
%There is thus an algebra structure on the associated graded $\text{gr}^\cdot \text{KHA}(Q,W)$ with respect to the filtration $F^{\leq \cdot}$.

\subsubsection{}\label{subsec432}
Let $A\in S^d_w(\delta)$. Consider the subcategory $D^b(\X(d))_{<A}$ of $D^b(\X(d))$ generated by the categories $\mathbb{M}_B(\delta)$ for $(r,p)(B)<(r,p)(A)$. Let $D^b(\X(d))_{\leq A}$ be the subcategory of $D^b(\X(d))$ generated by $\mathbb{M}_A(\delta)$ and $D^b(\X(d))_{<A}$. 
By Theorem \ref{them1} for $W=0$, there is a semi-orthogonal decomposition
\[D^b(\X(d))_{\leq A}=\big\langle \mathbb{M}_A(\delta), D^b(\X(d))_{< A}\big\rangle.\] 
We denote by $\Phi_A: D^b(\X(d))_{\leq A}\to \mathbb{M}_A(\delta)$ the left adjoint to the natural inclusion. Denote by $\text{MF}(\X(d),W)_{\leq A}$ and $\text{MF}^{\text{gr}}(\X(d),W)_{\leq A}$
the analogous categories for a general potential.

\subsubsection{}\label{gammadelta}

For $e, f\in\mathbb{N}^I$, let $\lambda_{e,f}$ be an antidominant cocharacter corresponding to the partition $(e,f)$ of $e+f$, see Subsection \ref{paco}. Consider the set $\mathcal{W}_{e, f}=\{\beta|\,\langle \lambda_{e, f}, \beta\rangle<0\}$. We use the notations $L_{e, f}:=L^{\lambda_{e, f}<0}$, $N_{e,f}:=N^{\lambda_{e,f}<0}$. Let $\rho_{e,f}:=\rho^{\lambda_{e,f}<0}$ be half the sum of positive roots pairing negatively with $\lambda_{e,f}$. Then 
\[L_{e,f}=N_{e,f}-2\rho_{e,f}.\]
%Let \[N_{d,e}:=(-1)^{\chi(d,e)}N^{\lambda_{d,e}<0}=(-1)^{\chi(d,e)}\sum_{\mathcal{W}_{d,e}}\beta\in K_0\left(BG(d)\times BG(e)\right).\]
Recall the notations from \eqref{genK}.
Define the equivariant monomials
\[q^{\gamma(e,f)}\in K_0(BG(e))\text{ and }
q^{-\delta(e,f)}\in K_0(BG(f))\] by the equality:
\[L_{e,f}=q^{\gamma(e,f)}q^{-\delta(e,f)}\in K_0\left(BG(e)\times BG(f)\right).\]
Denote also by $N_{e,f}$ and $L_{e,f}$ the corresponding $G(e)\times G(f)$-equivariant line bundles.
%shifted in dimensions $text{dim}\,R(d+e)-\dim R(e,f)$ and $-\chi(e,f)=\text{dim}\,\X(d+e)-\dim \X(e,f)$, respectively. 
Let $w_{e,f}$ be the Weyl group element \[w_{e,f}=(w^a)_{a\in I}\in\mathfrak{S}\cong \prod_{a\in I}\mathfrak{S}_{e^a+f^a},\] such that for a vertex $a\in I$: 
\[w^a(i)=\begin{cases}
  i+f, \text{ if }1\leq i\leq e,\\
  i-e, \text{ if }e<i\leq e+f.
\end{cases}
\]
Then $w_{f,e}=w_{e,f}^{-1}$.
%\[w^a: \left(1,\cdots,f^a,f^a+1,\cdots,e^a+f^a\right)\mapsto \left(f^a+1,\cdots,e^a+f^a,1,\cdots,f^a\right).\]Then $w_{f,e}=w_{e,f}^{-1}$.
%Let $w_{f,e}$ be the Weyl group element \[w_{f,e}=(w^a)_{a\in I}\in\mathfrak{S}\cong \prod_{a\in I}\mathfrak{S}_{e^a+f^a},\] such that for a vertex $a\in I$: \[w^a: \left(1,\cdots,f^a,f^a+1,\cdots,e^a+f^a\right)\mapsto \left(f^a+1,\cdots,e^a+f^a,1,\cdots,f^a\right).\]Then $w_{f,e}=w_{e,f}^{-1}$.

\begin{prop}\label{Wcomputation}
(a) We have that $w_{e,f}\cdot \rho-\rho=-2\rho_{f,e}$.

(b) Multiplication by $-w_{f,e}$ gives a bijection of sets
$(-w_{f,e})\cdot \mathcal{W}_{f,e}\cong
\mathcal{W}_{e,f}.$
\end{prop}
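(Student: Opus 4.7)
Both (a) and (b) reduce to per-vertex computations, since $\rho$, $\rho_{f,e}$, the weight multiset $\mathcal{W}$, and the permutation $w_{e,f}$ all decompose vertex by vertex. I fix $a\in I$ and abbreviate $e=e^a$, $f=f^a$, $n=e+f$; at this vertex the basis is $\beta_1,\dots,\beta_n$.

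For part (a), the standard expression $\rho=\sum_{j=1}^n \tfrac{n+1-2j}{2}\,\beta_j$ together with the convention $w\cdot\beta_j=\beta_{w(j)}$ yields $(w_{e,f}\cdot\rho)_j=\rho_{w_{e,f}^{-1}(j)}=\rho_{w_{f,e}(j)}$ (using $w_{f,e}=w_{e,f}^{-1}$, which is immediate from the definition). Splitting on whether $j\le f$ or $j>f$ and plugging in gives
\[
(w_{e,f}\cdot\rho-\rho)_j=\begin{cases} -e & \text{if } 1\le j\le f,\\ \phantom{-}f & \text{if } f<j\le n. \end{cases}
\]
On the other side, antidominance of $\lambda_{f,e}$ with Levi $G(f)\times G(e)$ forces its first $f$ coordinates to be strictly less than its last $e$ coordinates, so the positive roots $\beta_j-\beta_k$ ($j<k$) pairing negatively with $\lambda_{f,e}$ are exactly those with $j\le f<k$. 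Summing yields $2\rho_{f,e}=e\sum_{j=1}^f\beta_j-f\sum_{k=f+1}^n\beta_k$, and negating matches the previous display coordinate by coordinate.

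For part (b), I describe $\mathcal{W}_{f,e}$ explicitly: each arrow $s\to t$ in $Q$ contributes weights $\beta^t_k-\beta^s_j$, and evaluating $\langle\lambda_{f,e},\cdot\rangle$ singles out those with $k\le f^t$ and $j>f^s$. The permutation $w_{f,e}$ acts vertex by vertex, sending $\{1,\ldots,f^i\}$ to $\{e^i+1,\ldots,e^i+f^i\}$ (shift by $e^i$) and $\{f^i+1,\ldots,e^i+f^i\}$ to $\{1,\ldots,e^i\}$ (shift down by $f^i$). Applying $-w_{f,e}$ to $\beta^t_k-\beta^s_j\in\mathcal{W}_{f,e}$ therefore produces $\beta^s_{j-f^s}-\beta^t_{k+e^t}$, a weight with target index $j-f^s\le e^s$ at vertex $s$ and source index $k+e^t>e^t$ at vertex $t$. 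This is the weight attached to the reversed arrow $t\to s$, which exists in $Q$ with matching multiplicity because $Q$ is symmetric; so the image lies in $\mathcal{W}_{e,f}$. Since $w_{e,f}$ and $w_{f,e}$ are mutually inverse, the map $-w_{e,f}\colon\mathcal{W}_{e,f}\to\mathcal{W}_{f,e}$ provides the two-sided inverse, giving the bijection.

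The whole argument is a direct computation once the antidominance convention, the identification $M(d)\cong M(e)\oplus M(f)$, and the action $w\cdot\beta_j=\beta_{w(j)}$ are pinned down. The only conceptual point is that $-w_{f,e}$ swaps the source and target of the underlying arrow, so the symmetry of $Q$ is exactly the input that makes $\mathcal{W}_{f,e}$ and $\mathcal{W}_{e,f}$ match as multisets.
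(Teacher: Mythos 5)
Your proposal is correct and follows essentially the same route as the paper: both parts reduce to a per-vertex direct computation with the explicit permutation $w_{e,f}$, and your part (b) (applying $-w_{f,e}$ to a typical weight $\beta^t_k-\beta^s_j$ and using symmetry of $Q$ to land in $\mathcal{W}_{e,f}$) is literally the paper's argument. The only cosmetic difference is in (a), where you compute the coordinates of $\rho$ and permute them, while the paper invokes the standard identity expressing $w_{e,f}\cdot\rho-\rho$ as the sum of the positive roots made negative; your coordinate expressions for this difference and for $2\rho_{f,e}$ agree with the paper's, so the conventions are consistent.
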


\begin{proof}
To simplify notation for $(a)$, assume that there is only one vertex. The weights contributing to $w_{e,f}\cdot\rho$ which are not positive are of the form 
$\beta_{i+f}-\beta_{j-e}$ with  $1\leq i\leq e<j\leq e+f$.
Then 
\[w_{e,f}\cdot \rho-\rho=\sum_{i\leq e<j}\left(\beta_{i+f}-\beta_{j-e}\right)=-2\rho_{f,e}.\]
For $(b)$, the weights in $\mathcal{W}_{f,e}$ are of the form $\beta^a_{i}-\beta^b_{j}$ where $a$ and $b$ be vertices with an edge between them, $1\leq i\leq f^a$, and $f^b+1\leq j\leq e^b+f^b$. 
Then
\[\left(-w_{f,e}\right)\cdot\left(\beta^a_{i}-\beta^b_{j}\right)=\beta^b_{j-f^b}-\beta^a_{i+e^a}.\]
The weight $\beta^b_{j-f^b}-\beta^a_{i+e^a}$ is in $\mathcal{W}_{e,f}$ because there is an edge between $b$ and $a$ as $Q$ is symmetric, $1\leq j-f^b\leq e^b$, and $e^a+1\leq i+e^a\leq e^a+f^a$.
\end{proof}

\subsubsection{}
 
In this subsection, we denote by $A$ a partition in $S^d_w(\delta_d)$ with parts $(e,v), (f,u)$ and associated weights $\delta_e$ and $\delta_f$, see \eqref{deltaai}.

\begin{prop}\label{commu2}
 Consider vector bundles  $\mathcal{E}=\OO_{\X(e)}\otimes \Gamma_{G(e)}(\chi_e)\in \mathbb{M}(e; \delta_e)_v$ and $\mathcal{F}=\OO_{\X(f)}\otimes \Gamma_{G(f)}(\chi_f)\in \mathbb{M}(f; \delta_f)_u$. 
 
 (a) We have that \[p_{f,e*}q_{f,e}^*\big(\mathcal{F}\boxtimes \mathcal{E}\otimes L_{f,e}[\chi(e,f)]\big)\in D^b(\X)_{\leq A}.\]
 
(b) There is a natural isomorphism
 \begin{equation*}\label{commu}
    \Phi_A\left(p_{f,e*}q_{f,e}^*\left(\mathcal{F}\boxtimes \mathcal{E}\otimes L_{f,e}[\chi(e,f)]\right)\right)\xrightarrow{\sim} \mathcal{E}\boxtimes\mathcal{F}.
\end{equation*}

(c) There is a natural map
\[p_{f,e*}q_{f,e}^*\left(\mathcal{F}\boxtimes \mathcal{E}\otimes L_{f,e}[\chi(e,f)]\right)\rightarrow p_{e,f*}q_{e,f}^*\left(\mathcal{E}\boxtimes\mathcal{F}\right)\] and its cone is in $D^b(\X)_{<A}$. 

% The analogous results also hold for $\text{MF}$ and $\text{MF}^{\text{gr}}$ for a general potential.
\end{prop}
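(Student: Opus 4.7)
My plan is to reduce to the case $W=0$ and to the case where $\mathcal{E}=\mathcal{O}_{\X(e)}\otimes\Gamma_{G(e)}(\chi_e)$ and $\mathcal{F}=\mathcal{O}_{\X(f)}\otimes\Gamma_{G(f)}(\chi_f)$ are defining generators of $\mathbb{M}(e;\delta_e)_v$ and $\mathbb{M}(f;\delta_f)_u$. The reduction to $W=0$ mirrors the end of the proof of Theorem \ref{them1}, passing $\text{MF}$ through $p_*q^*$ via \cite[Propositions 2.1 and 2.2]{P1}. The reduction to generators follows from Corollary \ref{cor21}, since all three assertions are given by dg functors in $\mathcal{E}\boxtimes\mathcal{F}$ on $\mathbb{M}(e;\delta_e)_v\otimes\mathbb{M}(f;\delta_f)_u$.

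The heart of the argument is the Borel--Weil--Bott spectral sequence of Proposition \ref{bbw} applied to the antidominant cocharacter $\lambda_{f,e}$ and the generator $\mathcal{F}\boxtimes\mathcal{E}\otimes L_{f,e}$. This produces a spectral sequence converging to $p_{f,e*}q_{f,e}^*(\mathcal{F}\boxtimes\mathcal{E}\otimes L_{f,e})$ with terms $\mathcal{O}_{\X(d)}\otimes\Gamma_{G(d)}((\chi_f+\chi_e+L_{f,e}-\sigma_I)^+)$ placed in cohomological degree $|I|-\ell(I)$, for $I\subseteq\mathcal{W}_{f,e}$. For the maximal subset $I=\mathcal{W}_{f,e}$ one has $\sigma_I=N_{f,e}$, and the identity $L_{f,e}=N_{f,e}-2\rho_{f,e}$ reduces the relevant weight to $\chi_f+\chi_e-2\rho_{f,e}$. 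Proposition \ref{Wcomputation}(a) then identifies its dominant shifted Weyl conjugate as $w_{f,e}*(\chi_f+\chi_e-2\rho_{f,e})=\chi_e+\chi_f$ with $\chi_e$ in the first $e$ slots and $\chi_f$ in the last $f$ slots, namely the generator of $\mathbb{M}_A(\delta)$ corresponding to $\mathcal{E}\boxtimes\mathcal{F}$ under $p_{e,f*}q_{e,f}^*$. The cohomological degree $|\mathcal{W}_{f,e}|-\ell(w_{f,e})=-\chi(e,f)$ is absorbed exactly by the shift $[\chi(e,f)]$ in the statement. For subsets $I\subsetneq\mathcal{W}_{f,e}$, a variant of the argument in the proof of Proposition \ref{rgoesdown} shows that $(\chi_f+\chi_e+L_{f,e}-\sigma_I)^+$ has strictly smaller $(r,p)$-invariant than the top term, so these contributions lie in $D^b(\X)_{<A}$.

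Part (a) follows since the full spectral sequence then lives in $D^b(\X)_{\leq A}$. Applying the left adjoint $\Phi_A$ annihilates $D^b(\X)_{<A}$ and isolates the top term, yielding the isomorphism of (b). For (c), the unit of the adjunction $\Phi_A\dashv p_{e,f*}q_{e,f}^*$ -- using that $p_{e,f*}q_{e,f}^*$ is fully faithful on $\mathbb{M}_A(\delta)$ by Theorem \ref{them1} for $W=0$ -- supplies the required canonical map, whose cone lies in $D^b(\X)_{<A}$ by the semi-orthogonality of the decomposition. The passage to $\text{MF}$ and $\text{MF}^{\text{gr}}$ for a general potential is then formal. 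The main obstacle is the bookkeeping in the second paragraph: aligning the sign conventions for $\rho_{f,e}$ and $N_{f,e}$, the antidominance convention for $\lambda_{f,e}$, the permutation convention for $w_{f,e}$, and in particular verifying the numerical identity $|\mathcal{W}_{f,e}|-\ell(w_{f,e})=-\chi(e,f)$ for symmetric $Q$. Proposition \ref{Wcomputation}(b) is essential here, since it governs how $w_{f,e}$ exchanges the multisets $\mathcal{W}_{f,e}$ and $\mathcal{W}_{e,f}$ up to sign, and makes the final weight match close.
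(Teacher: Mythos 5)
Your proposal follows essentially the same route as the paper's proof: reduce to generators via Corollary \ref{cor21}, apply the Borel--Weil--Bott spectral sequence of Proposition \ref{bbw} at $\lambda_{f,e}$, identify the top term $I=\mathcal{W}_{f,e}$ with $\mathcal{E}\boxtimes\mathcal{F}$ via Proposition \ref{Wcomputation}, and obtain (c) from (b) by the unit of the adjunction $\Phi_A\dashv p_{e,f*}q_{e,f}^*$. One small imprecision: for the lower-order terms $I\subsetneq\mathcal{W}_{f,e}$ the paper does not invoke a variant of Proposition \ref{rgoesdown} (which presupposes the weight lies on $F_r(\lambda)^{\text{int}}$ for the same cocharacter as the spectral sequence, which is not the case for $\chi_f+\chi_e+L_{f,e}+\rho+\delta$); instead it applies $w_{e,f}$ to the standard form from Corollary \ref{godown} to rewrite the weight as $(1-r)N_{f,e}-\sigma_I+w_{e,f}\psi$, and then appeals to Proposition \ref{subse} to conclude that $r(\theta)\leq r$ with equality only when $I=\mathcal{W}_{f,e}$; you should make that substitution to close the argument. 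The paper also first reduces to $Q=Q^d$ connected and handles $Q^o$ separately by Borel--Weil--Bott, a minor step your proposal omits.
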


\begin{proof}
We may assume that $Q=Q^d$ is connected.

If $Q=Q^o$, then $d=e=1$ and the statements follow from the Borel-Weyl-Bott theorem. 

We assume next that $Q$ is not $Q^o$.

(a)
We may assume that \begin{align*}
    \mathcal{E}&=\OO_{\X(e)}\otimes \Gamma_{G(e)}(\chi_e),\\
    \mathcal{F}&=\OO_{\X(f)}\otimes\Gamma_{G(f)}(\chi_f).
\end{align*} Let $H:=G(f)\times G(e)$.
By Proposition \ref{bbw}, there exists a spectral sequence with terms 
\[\OO_{\X(d)}\otimes \Gamma_{G(d)} \left(\left(\chi_f+\chi_e+L_{f,e}-\sigma_I\right)^+\right)\] in degree $|I|-\ell(I)$ converging to $p_{f,e*}q_{f,e}^*\left(\Gamma_H(\chi_f+\chi_e)\otimes L_{f,e}\otimes\OO_{\X(e)\times\X(f)}\right),$ where $I\subset \mathcal{W}_{f,e}$. Recall that $L_{f,e}=N_{f,e}-2\rho_{f,e}$. 
Using Corollary \ref{godown}, write
\[\chi_e+\chi_f+\rho+\delta_d=rN_{e,f}+\psi\] for $r>\frac{1}{2}$ and $r(\psi)<r$. 
Using Proposition \ref{Wcomputation}, we have that
\[\chi_f+\chi_e+N_{f,e}-2\rho_{f,e}+\rho+\delta_d=(1-r)N_{f,e}+w_{e,f}\psi,\]
and thus for $I\subset \mathcal{W}_{f,e}$ we have that
\[\chi_f+\chi_e+N_{f,e}-\sigma_I-2\rho_{f,e}+\rho+\delta_d=(1-r)N_{f,e}-\sigma_I+w_{e,f}\psi.\]
Denote the left hand side by $\theta$. 
Using Proposition \ref{subse}, we have $r(\theta)\leq r$ and if $\theta\in r\partial\WW$, then $I=\mathcal{W}_{f,e}$.
%$\theta\in F_r(\mu)^{\text{int}}$ for $\mu\leq \lambda$. It lies on $F_r(\lambda)$ only for $I=\mathcal{W}_{f,e}$. 

To show (b), observe first that the corresponding highest weight for the partial sum $I=\mathcal{W}_{f,e}$ is
\[w_{f,e}*\left(\chi_f+\chi_e+N_{f,e}-2\rho_{f,e}-N_{f,e}\right)=\chi_e+\chi_f.\]
The corresponding shift is $|\mathcal{W}_{f,e}|-\ell\left(\mathcal{W}_{f,e}\right)=-\chi(e,f)$.
There are thus natural isomorphisms
\[\Phi_A\left(p_{f,e*}q_{f,e}^*\left(\mathcal{F}\boxtimes \mathcal{E}\otimes L_{f,e}[\chi(e,f)] \right)\right)\xrightarrow{\sim} \mathcal{E}\boxtimes\mathcal{F}\]
for $\mathcal{E}=\OO_{\X(e)}\otimes \Gamma_{G(e)}(\chi_e)$ and $\mathcal{F}=\OO_{\X(f)}\otimes\Gamma_{G(f)}(\chi_f)$. 
%The conclusion follows from Proposition \ref{prop21}. 
Part (c) follows from (b) using the adjoint pair of functors
$\Phi_A: D^b(\X(d))_{\leq A}
\rightleftarrows \mathbb{M}_A(\delta): p_{*}q^*.$
% The functors used are also defined for $\text{MF}$ and $\text{MF}^{\text{gr}}$, and the constructions in the arbitrary potential case are compatible with the ones for $D^b(\X(d))$. 
\end{proof}

%The order in \cite[Subsection 7.1.11]{P} is finer than the order in Subsection \ref{order}.
%The following corollary is immediate from the definition of the set $W^d_w$, see Subsection \ref{tdw2}.
%\begin{cor}\label{qp2}The categories $\mathbb{S}_A$ for $A$ in $W^d_w$ generate $\text{MF}^{\text{gr}}(\X(d),W)_w$.\end{cor}
%In the above, let $\Phi_d:\text{MF}^{\text{gr}}(\X(d),W)_w\to \mathbb{M}(d)_w$ be the left adjoint of the natural inclusion.

\section{The PBW theorem for KHAs}\label{s4}
%In this Section, we make the assumption on the pair $(Q,W)$ that for any $d\in\mathbb{N}^I$, there is an inclusion \begin{equation}\label{assumption}R(d)^{T(d)}\subset \left(\text{Tr}\,W_d\right)^{-1}(0).\end{equation} 

\subsection{Preliminaries} In this Subsection, we assume that $Q=Q^d$. 
%In this Subsection, we assume that $Q=Q^d$ is connected. We also assume that $Q=Q^o$ everywhere except for Theorem \ref{bial}.
\subsubsection{}\label{subsec511}
%Let $(d,w)\in\mathbb{N}^I\times\mathbb{Z}$. For $A\in S^d_w$, denote by\[\mathbb{S}_A:=\text{MF}\left(\mathbb{M}_A, \bigoplus_{i=1}^k W_{d_i}\right).\] Given a grading as in Subsection \ref{haq}, we also define $\mathbb{S}^{\text{gr}}_A$ similarly.
Assume first that $Q=Q^d$ is connected.
Consider a partition $A\in T^d_w(\delta)$ with terms $(e,v)$ and $(f,u)$ and associated weights $\delta_e$ and $\delta_f$ such that
\[-\chi_A=\delta_e+\delta_f,\] see \eqref{deltaai}. 
Let $\lambda=\lambda_{e,f}$ and $m:=-n_\lambda/2-\langle \lambda, \delta\rangle$.
By the definition of the category $\mathbb{M}(d;\delta)$ and by \cite[Corollary 3.28]{hl}, the functor $p_\lambda^*$ has image in
\[p_\lambda^*: \mathbb{M}(d; \delta)\to \Big\langle q_\lambda^*\left(D^b\left(\X(e)\times\X(f)\right)_w\right)\Big\rangle,\]
where the right hand side contains categories for all weights $w\geq m$. Denote the right hand side by $\mathbb{E}$.
For all summands of $\mathbb{E}$, 
the functor $q_\lambda^*$ induces an equivalence $D^b\left(\X(e)\times\X(f)\right)_w\cong q_\lambda^*\left(D^b\left(\X(e)\times\X(f)\right)_w\right)$. 
Consider the adjoint to the natural inclusion $q_\lambda^*: D^b\left(\X(e)\times\X(f)\right)_w\hookrightarrow \mathbb{E}$:
\[\beta_m: \mathbb{E}\to D^b\left(\X(e)\times\X(f)\right)_m.\]
The composition $\beta_m p_\lambda^*$ induces a functor
\[\Delta_A:=\beta_m p_\lambda^*: \mathbb{M}(d; \delta)_w\to \mathbb{M}(e; \delta_e)_v\otimes \mathbb{M}(f; \delta_f)_u=:\mathbb{M}_A.\]
It induces a functor 
\[\Delta_A: \mathbb{S}(d;\delta)_w\to \mathbb{S}_A(\delta).\]
Recall the order on the set $T^d_w(\delta)$ defined as in Subsection \ref{compa}. Let $B>A$ be in $T^d_w(\delta)$. The above construction also provides a functor
\[\Delta_{AB}: \mathbb{S}_A(\delta)\to \mathbb{S}_B(\delta).\]
If $Q$ is not connected, define the functors $\Delta$ in the natural way using the decompositions in Subsection \ref{swe}. 

When $Q=Q^o$, the constructions are non-zero only for $A$ and $B$ the length $1$ partitions $(1,w)$ and $\Delta=\text{id}$.

\subsubsection{} 
Recall the equivalence $D_{\text{sg}}(\X(d)_0)\cong \text{MF}(\X(d), W)$ and the exact sequence \eqref{es}. 
By \cite[Proposition 3.6]{P1}, the pushforward $i_*:\X(d)_0\to \X(d)$ induces an algebra morphism
\[i_*: \text{KHA}(Q,W)\to \text{KHA}(Q,0).\] It can be also described using matrix factorizations \begin{align*}
    i_*: K_0(\text{MF}(\X(d),W))&\to K_0(\X(d))\\
    \left[\left(\alpha: F\rightleftarrows G: \beta\right)\right]&\mapsto [F]-[G].
\end{align*}
It also induces a map
\begin{equation}\label{imim}
    i_*: K_0\left(\mathbb{S}_A(\delta)\right)\to K_0\left(\mathbb{M}_A(\delta)\right).\end{equation}
    Denote the image of \eqref{imim} by $K_0\left(\mathbb{S}_A(\delta)\right)'$. 
\begin{prop}\label{idd}
Consider partitions $A, B\in T^d_w(\delta)$ with $B>A$.
The following diagram commutes:
\begin{equation*}
    \begin{tikzcd}
    K_0\left(\mathbb{S}_A(\delta)\right) \arrow[r, "i_*"]\arrow[d, "\Delta_{AB}"]& K_0\left(\mathbb{M}_A(\delta)\right)\arrow[d, "\Delta_{AB}"]\\
    K_0\left(\mathbb{S}_B(\delta)\right)\arrow[r, "i_*"]& K_0\left(\mathbb{M}_B(\delta)\right).
    \end{tikzcd}
\end{equation*}
\end{prop}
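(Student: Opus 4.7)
The plan is to verify commutativity of the diagram by a direct chase at the level of matrix factorization representatives. The key observation is that $\Delta_{AB}$ is defined uniformly on $\mathbb{S}_A(\delta)$ and on $\mathbb{M}_A(\delta)$ as the composition $\beta_m \circ p_\lambda^*$, where $p_\lambda$ is the attracting locus map for $\lambda = \lambda_{e,f}$ and $\beta_m$ is the adjoint projecting onto the prescribed weight subcategory. First I would spell out explicitly how these two operations extend to matrix factorizations: on $\mathcal{K} = (\alpha: F \rightleftarrows G: \beta)$ the pullback $p_\lambda^*$ acts componentwise, producing $(p_\lambda^*\alpha: p_\lambda^* F \rightleftarrows p_\lambda^* G: p_\lambda^* \beta)$, and $\beta_m$ then extracts the prescribed weight piece of each of $p_\lambda^* F$ and $p_\lambda^* G$. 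The central point is that $\beta_m$ respects the matrix factorization data: because the potential $W$ is $G(d)$-invariant and $\lambda \subset SG(d)$, the morphisms $\alpha$ and $\beta$ are weight-preserving for $\lambda$ and hence restrict to the selected weight components.

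With this setup, the diagram chase is immediate. Traversing rightward then downward, one has $i_*(\mathcal{K}) = [F] - [G]$, which $\Delta_{AB}$ sends to $[\beta_m p_\lambda^* F] - [\beta_m p_\lambda^* G]$ in $K_0(\mathbb{M}_B(\delta))$. Traversing downward then rightward, $\Delta_{AB}(\mathcal{K})$ is the matrix factorization with components $\beta_m p_\lambda^* F$ and $\beta_m p_\lambda^* G$, and $i_*$ maps it to the same class $[\beta_m p_\lambda^* F] - [\beta_m p_\lambda^* G]$. Since the diagram commutes on classes of matrix factorizations, which generate $K_0(\mathbb{S}_A(\delta))$, commutativity follows from linearity.

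I do not foresee a substantive obstacle. The map $i_*$, at the level of $K_0$, only remembers the alternating sum of the underlying coherent sheaves in a matrix factorization, so it is manifestly compatible with any linear functor applied componentwise. The only point requiring care is the lift of $\beta_m$ (and of $p_\lambda^*$) to matrix factorizations, which I would justify using the $G(d)$-equivariance of $W$ and the compatibility of pullback with weight decompositions as indicated above. If desired, one can also phrase this as saying that both horizontal maps factor through the common operation of taking underlying $\mathbb{Z}/2$-graded complexes, with which the vertical maps $\Delta_{AB}$ commute by construction.
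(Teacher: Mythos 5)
Your argument is correct and follows essentially the same approach as the paper: both factor $\Delta_{AB}$ as $\beta_m\circ p_\lambda^*$ and verify that $i_*$ commutes with each factor. The only difference is that where you assert $\beta_m$ lifts componentwise to matrix factorizations because the potential is $\lambda$-invariant, the paper reaches the same conclusion by noting that $\beta_m$ inverts $q^*$ on the relevant weight summand and citing the compatibility of $i_*$ with $q^*$ from \cite[Proof of Proposition 3.6]{P1}.
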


\begin{proof}
The maps $i_*$ and $p_{\lambda}^*$ commute. The functors $\beta_m$ are inverses of functors $q_{e,f}^*$, and the compatibility between $i_*$ and $q_{e,f}^*$ is treated in \cite[Proof of Proposition 3.6]{P1}.  
%For simplicity, we assume that $A=(d,w)$ and $B$ has terms $(e,v)$ and $(f,u)$.Let $\X(d)'=R(d)/G(e,f)$ and let $\X(d)'_0$ the fiber over zero of $\text{Tr}\,W:\X(d)'\to\mathbb{A}^1_\mathbb{C}$. Then $p_{e,f}:\X(e,f)\to \X(d)$ factors through a natural smooth map $\X(d)'\to \X(d)$. There is a vector bundle $\mathcal{E}$ on $\X(d)'$ with a section $s\in \Gamma(\X(d)', \mathcal{E})$ such that $\X(e,f)$ is isomorphic to the Köszul stack $K(\X(d)', \mathcal{E}, s)$associated to $s$. Then $\X(e,f)_0\to \X(d)'_0$ has a Köszul description\[\X(e,f)_0\cong K(\X(d)'_0, \mathcal{E}, s),\] and sothe functor $p_{e,f}^*$ has image in \[p_{e,f}^*:D^b(\X(d)_0)\to D^b(\X(d)'_0)\to D^b(\X(e,f)_0).\]The compatibility between $i_*$ and $p_{e,f}^*$ follows from the following diagram
%\begin{equation*}\begin{tikzcd}
%    D^b(\X(d)_0)\arrow[d,"p_{e,f}^*"]\arrow[r,"i_*"]& D^b(\X(d))\arrow[d,"p_{e,f}^*"]\\D^b(\X(e,f)_0)\arrow[r,"i_*"]& D^b(\X(e,f)).\end{tikzcd}\end{equation*}
%The functors $\beta_m$ are inverses of functors $q_{e,f}^*$, and the compatibility between $i_*$ and $q_{e,f}^*$ is treated in \cite[Proof of Proposition 3.6]{P1}.  
\end{proof}

\subsubsection{}\label{swapp} 
Assume that $Q$ is connected.
Consider a partition $A$ of $(d,w)$ with terms $(e,v)$ and $(f,u)$. Recall that $N_{e,f}:=N^{\lambda_{e,f}<0}$, $\rho_{d,e}:=\rho^{\lambda_{e,f}<0}$, and $L_{e,f}$ from Subsection \ref{gammadelta}. 

Let
\[\chi_A:=rN_{e,f}-\rho_{e,f}-\delta.\]
Let $A'$ be the partition of $(d,w)$ with parts $(f,u')$ and $(e,v')$ corresponding to the weight
\[
    \chi_{A'}:=w_{e,f}\chi_A+L_{f,e}.
\]
Using Proposition \ref{Wcomputation}, we see that
\begin{equation}\label{chichi}\chi_{A'}=(1-r)N_{f,e}-\rho_{f,e}-\delta,\end{equation}   so $\chi_{A'}$ corresponds to a partition $A'$ with terms $(f,u'), (e,v')$.
Denote the transformation \begin{equation}\label{transff}
    (e,v), (f,u)\mapsto (f,u'), (e,v').
\end{equation}
by $A\mapsto A'$. 
If $A\in T^d_w(\delta)$, then $A'\in T^d_w(\delta)$ by the computation in \eqref{chichi}. 
%Recall that $N_{e,f}:=N^{\lambda_{e,f}<0}$. Choose $\chi_e$ and $\chi_f$ such thatIf \[\chi_A+\rho=\frac{1}{2}N_{e,f}+\rho_{e,f},\] then, using Proposition \ref{Wcomputation}, we have that\[\chi_{A'}+\rho=w_{e,f}\chi_A+L_{f,e}+\rho=\frac{1}{2}N_{f,e}+\rho_{f,e}.\]
Define
\begin{align*}
    \text{sw}: K_0\left(\mathbb{S}(e; \delta_e)_v\right)\boxtimes
    K_0\left(\mathbb{S}(f;\delta_f)_u\right) &\to K_0\left(\mathbb{S}(f; \delta'_f)_{u'}\right)\boxtimes K_0\left(\mathbb{S}(e; \delta'_e)_{v'}\right)\\
    y&\mapsto (-1)^{\chi(e,f)}w_{e,f}(y)
    q^{L_{f,e}}.
    \end{align*}
Then $\text{sw}\circ\text{sw}=\text{id}$.

More generally, 
consider a partition $A=(d_i, w_i)_{i=1}^k$ of $(d,w)$. Using the transformation \eqref{transff} for all pairs $(d_i, d_{i+1})$ or $1\leq i\leq k-1$, we obtain an action of $\mathfrak{S}_k$ on partitions $A$ of $(d,w)$ of cardinal $k$ which we denote by $A\mapsto \sigma(A)$.
For $A$ and $B$ conjugate under $\mathfrak{S}_{|A|}$, we write $A\sim B$. 
If $A\in T^d_w(\delta)$, then all its conjugates are in $T^d_w(\delta)$. 
For $\sigma\in \mathfrak{S}_k$, there is a corresponding map
\[\text{sw}_\sigma: K_0(\mathbb{S}_A(\delta))\to K_0\left(\mathbb{S}_{\sigma(A)}(\delta)\right).\]

If $Q=Q^o$, then $T^d_w$ is zero for $d>1$ and has one element $(1,w)$ for $d=1$. The swap morphisms are all the identity. 

If $Q$ is not connected, extend the definition of $\text{sw}_{\sigma}$ using the decompositions from Subsection \ref{swe}.

%and by $(i, i+1)$ the transposition on $A$. The transpositions $\text{sw}_i$ for $1\leq i\leq k-1$ generate an action of $\mathfrak{S}_k$ on $K_0(\mathbb{S}_A)$, see \cite[Proposition 5.9]{P3}.  We denote by \begin{align*}\text{Sym}_k: K_0(\mathbb{S}_A)&\to K_0(\mathbb{S}_A)\\y&\mapsto \sum_{\sigma\in\mathfrak{S}_k}\sigma(y).\end{align*}

\subsubsection{}\label{subsec541}

We now state the main result of this Subsection. First, consider pairs $(b,t)$, $(c,s)$, $(e,v)$, $(f,u)$, and $(d,w)$ in $\mathbb{N}^I\times\mathbb{Z}$ such that 
\[(e,v)+(f,u)=(b,t)+(c,s)=(d,w).\]
We denote by $A$ the two term partition $(e,v)$, $(f,u)$, and by $B$ the two term partition $(b,t)$, $(c,s)$. Assume that $A$ and $B$ are in $T^d_w(\delta)$. 
Let $\textbf{S}$ be the set of partitions $C$ of $T^d_w(\delta)$ with terms $(a_i, \alpha_i)$ for $1\leq i\leq 4$, some of them possibly zero, such that
\begin{align}\label{alignrelations}
    \notag (a_1, \alpha_1)+(a_2, \alpha_2)&=(e,v),\\
    \notag (a_3, \alpha_3)+(a_4, \alpha_4)&=(f,u),\\
    (a_1, \alpha_1)+(a_3, \alpha'_3)&=(b,t),\\
    (a_2, \alpha'_2)+(a_4, \alpha_4)&=(c,s),
\end{align}
where $\alpha'_2, \alpha'_3$ are defined via transformation \eqref{transff}. The weight $\delta\in M^{\mathfrak{S}_d}_{\mathbb{R}}$ induces corresponding weights for all the other pairs involved.
For such $C\in \textbf{S}$, consider the swap morphism for the transposition $(23)\in \mathfrak{S}_4$ from Subsection \ref{swapp}. Define
\[ \widetilde{m\boxtimes m}:=\left(m\boxtimes m\right)
    \text{sw}_{(23)}: K_0(\mathbb{S}_C(\delta))\to K_0(\mathbb{S}_B(\delta)).\]
    Note that the image of $K_0(\mathbb{S}_C(\delta))$ under $\text{sw}_{23}$ is in $K_0(\mathbb{S}_{C'}(\delta))$, where $C'$ is the partition with terms $(a_1, \alpha_1)$, $(a_3, \alpha'_3)$, $(a_2, \alpha'_2)$, and $(a_4, \alpha_4)$, and thus the image of $K_0(\mathbb{S}_{C'}(\delta))$ under $m\boxtimes m$ is in $K_0(\mathbb{S}_B(\delta))$ from the relation \eqref{alignrelations}.
Recall the definition of $\text{KHA}(Q,W)'$ and $\text{KHA}(Q,W)'_{d,w}$ from Subsection \ref{pbwkhas} and of $K_0(\mathbb{S}_A(\delta))'$ from \eqref{imim}. %Define \[K_0(\mathbb{S}_A(\delta))':=\text{image}\big(i_*: K_0(\mathbb{S}_A(\delta))\to K_0(\mathbb{M}_A(\delta))\big).\]

\begin{thm}\label{bial}
The following diagram commutes:
\begin{equation*}
    \begin{tikzcd}
    K_0(\mathbb{S}_A(\delta))'
    \arrow[d, "\Delta_{AC}"]\arrow[r, "m"]& 
    K_0\left(\mathbb{S}(d; \delta)_w\right)'\arrow[d, "\Delta_B"]\\
    \bigoplus_{C\in \textbf{S}}
    K_0(\mathbb{S}_C(\delta))'
    \arrow[r, "\widetilde{m\boxtimes m}"]& K_0(\mathbb{S}_B(\delta))'.
    \end{tikzcd}
\end{equation*}
\end{thm}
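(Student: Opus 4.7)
\begin{pf}[Proof plan for Theorem \ref{bial}]
The statement is a Green-type bialgebra compatibility between the Hall multiplication $m=p_{\lambda_{e,f}*}q_{\lambda_{e,f}}^*$ and the coproduct-like functor $\Delta_B=\beta_{m_B}\,p_{\lambda_{b,c}}^*$. My plan is to prove the analogous statement at the categorical level on $\text{MF}$ (from which the $K_0$-statement and the restriction to the image subspaces $K_0(\mathbb{S}_A(\delta))'$ follow by functoriality of $i_*$, exactly as in Proposition \ref{idd}). By Corollary \ref{cor21} and the generation result of Proposition \ref{gen0}, it suffices to verify the natural isomorphism on generators of the form $\mathcal{O}_{\X(e)}\otimes\Gamma(\chi_e)\boxtimes\mathcal{O}_{\X(f)}\otimes\Gamma(\chi_f)$, and after reduction to $W=0$ via the base change in the proof of Theorem \ref{them1}, we may even work entirely in $D^b(\X(d))$.

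The geometric heart of the argument is base change. Writing $\lambda=\lambda_{e,f}$ and $\mu=\lambda_{b,c}$, one forms the derived fiber product
\begin{equation*}
\X(d)^{\lambda\geq 0}\times_{\X(d)}\X(d)^{\mu\geq 0},
\end{equation*}
and identifies its connected components with the stacks $\X(\underline{a})$ parametrizing $4$-step filtrations with subquotients of dimensions $(a_1,a_2,a_3,a_4)$ satisfying the four compatibilities defining the set $\textbf{S}$. The Levi fixed locus $\X(d)^\lambda\times\X(d)^\mu$ likewise decomposes as a union of $\prod_{i=1}^4\X(a_i)$ over the same index set. Base change then rewrites $p_\mu^*\,p_{\lambda*}q_\lambda^*$ as a sum over $C\in \textbf{S}$ of composites $p_{\mu*}^C q_{\mu}^{C*}\circ q_\lambda^{C*}$ of maps between the various $\X(\underline{a})$, $\X(b)\times\X(c)$ and $\X(a_1)\times\cdots\times\X(a_4)$; the weight-filtering functor $\beta_{m_B}$ then keeps only the summands whose $1_{b}\times 1_c$-weight equals $(t,s)$, which picks out exactly the compositions indexed by $C\in \textbf{S}$.

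The second step is bookkeeping of shifts and twists. Two of the four ``factors'' (indexed by $a_2,a_3$) appear in the filtration built from $\lambda$ in the order $(a_2,a_3)$, but in the filtration built from $\mu$ in the opposite order. Transporting between the two orders is exactly the content of Proposition \ref{commu2}(c): on the subquotients of dimensions $a_2$ and $a_3$ it replaces $p_{a_3,a_2*}q_{a_3,a_2}^*$ by $p_{a_2,a_3*}q_{a_2,a_3}^*$ up to the twist $L_{a_3,a_2}[\chi(a_2,a_3)]$ and the Weyl element $w_{a_2,a_3}$, modulo terms of strictly smaller $(r,p)$-invariant. Since $\beta_{m_B}$ annihilates all such lower-order terms (they live in $D^b(\X)_{<B}$ by Proposition \ref{commu2}(a)), what survives is precisely $\widetilde{m\boxtimes m}\circ\Delta_{AC}$ with the swap factor $\text{sw}_{(23)}$ supplying the sign $(-1)^{\chi(a_2,a_3)}$ and the factor $q^{L_{a_3,a_2}}$ that appear in the definition of $\text{sw}$ in Subsection \ref{swapp}. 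This identifies each summand on the right hand side of base change with the corresponding term of $\widetilde{m\boxtimes m}\circ\Delta_{AC}$.

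The main obstacle, as I see it, is the second step: making sure that the twist $L_{f,e}$ and the cohomological shift $[\chi(e,f)]$ supplied by Proposition \ref{commu2} reassemble correctly across the four-fold partition, and that all the error terms are swept into $D^b(\X(d))_{<B}$ (hence killed by $\Delta_B$) rather than leaking into the leading term. This is a careful but finite computation using Proposition \ref{Wcomputation} to match $w_{a_2,a_3}\cdot\rho-\rho=-2\rho_{a_3,a_2}$ with the $\rho$-shifts in the weights $\chi_A,\chi_B,\chi_C$ prescribed by \eqref{weightA}–\eqref{deltaai}. Once this matching is established on pure tensors of equivariant line bundles, Proposition \ref{prop21} and Corollary \ref{cor21} promote it to a natural equivalence of functors, and finally passing to $K_0$ and restricting to the image of $i_*$ (using Proposition \ref{idd} for compatibility of $i_*$ with all four maps in the square) yields the theorem.
\end{pf}
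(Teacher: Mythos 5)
The reduction from an arbitrary potential $W$ to $W=0$ via Proposition~\ref{idd} matches the paper's first step exactly. From that point on, however, your route diverges sharply from the paper's: the paper simply invokes the shuffle-formula identity of \cite[Theorem 5.2]{P3} (proved there for $R(d)/PGL(d)$, with $w=0$, $\delta=0$) and observes that the same computation runs for general $w,\delta$; it gives no independent geometric argument. You instead propose to reprove the bialgebra compatibility directly, via a Mackey/base-change decomposition of $p_\mu^*\,p_{\lambda*}q_\lambda^*$ over the fiber product $\X(d)^{\lambda\geq 0}\times_{\X(d)}\X(d)^{\mu\geq 0}$, with Proposition~\ref{commu2}(c) supplying the swap of the two ``inner'' factors. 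This is a genuinely different, more geometric route; it has the virtue of explaining where the swap factor $\mathrm{sw}_{(23)}$ and the twist $q^{L_{a_3,a_2}}$ in $\widetilde{m\boxtimes m}$ come from, rather than verifying them numerically through shuffle formulas.

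That said, your plan leaves several real gaps that the paper's citation route avoids. First, the decomposition of the derived fiber product into pieces indexed by $4$-tuples $(a_i)$ with base change holding in the clean form you use is not automatic: the intersections of attracting loci are generally non-transverse, so one must control the derived/Tor structure, not merely the underlying reduced components. Second, the Mackey decomposition a priori runs over \emph{all} $4$-tuples satisfying the dimension compatibilities, not just those $C$ that lie in $T^d_w(\delta)$; you assert that the weight truncation $\beta_{m_B}$ picks out exactly the admissible $C$, but you do not explain why the $\lambda_{b,c}$-weight window cut out by $m_B$ agrees with the $(r,p)$-admissibility condition defining $T^d_w(\delta)$. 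Third, you need the error terms from Proposition~\ref{commu2}(c) — which a priori land in $D^b(\X)_{<A}$ — to actually land in $D^b(\X)_{<B}$ so they are annihilated by $\Delta_B$; this requires invoking the compatibility of the filtrations $F^{\leq\cdot}_{d,w}$ with multiplication set up just before Proposition~\ref{commu2}, which you do not do. None of these points is obviously false, but each is a substantive step, and the paper's own proof deliberately delegates all of them to the shuffle-formula computation of \cite[Theorem 5.2]{P3}.
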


\begin{proof}[Proof of Theorem \ref{bial}]
We may assume that $Q$ is connected.

If $Q=Q^o$, then $d=1$, and the only possibility for $A$ and $B$ are the identity partition $(1,w)$. Thus the set $\textbf{S}=\{(1,w)\}$, and the statement is immediate.

Assume that $Q$ is not $Q^o$.
Recall the algebra morphism
\[i_*: \text{KHA}(Q,W)\to \text{KHA}(Q,0).\]
By Proposition \ref{idd}, $i_*$ commutes with $\Delta$. 
%By Proposition \ref{incl}, $i_*$ is an inclusion, so 
It suffices to check the statement in the zero potential case. The statement now follows from \cite[Theorem 5.2]{P3}; the statement in loc. cit. for the stack $R(d)/PGL(d)$ implies Theorem \ref{bial} for $w=0$ and $\delta=0$.
The proof of \cite[Theorem 5.2]{P3} is based on explicit shuffle formula for $m$ and $\Delta$, and the same computation works for all weights $w\in\mathbb{Z}$ and $\delta\in M^{\mathfrak{S}_d}_{\mathbb{R}}$. 
\end{proof}

\begin{prop}\label{dcomm}
Let $x_{e,v}\in \text{KHA}(Q,W)'_{e,v}$ and $x_{f,u}\in \text{KHA}(Q,W)'_{f,u}$. Then
\[x_{e,v}\cdot x_{f,u}=\left(x_{f,u}q^{\gamma(f,e)}\right)\cdot\left(x_{e,v}q^{-\delta(f,e)}\right).\]
\end{prop}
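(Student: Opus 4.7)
The plan is to reduce the identity, through two layers, to the commutation relation supplied by Proposition \ref{commu2}(c). First, I would pass to the zero-potential case. Since $i_* : \text{KHA}(Q,W)\to \text{KHA}(Q,0)$ from \eqref{algmorp} is a morphism of $\mathbb{N}^I\times\mathbb{Z}$-graded algebras with image $\text{KHA}(Q,W)'$, and the twist classes $q^{\gamma(f,e)}\in K_0(BG(f))$ and $q^{-\delta(f,e)}\in K_0(BG(e))$ from Subsection \ref{gammadelta} commute with $i_*$, lifting $x_{e,v}, x_{f,u}$ along $i_*$ reduces the asserted identity to the analogous identity for their images in $\text{KHA}(Q,0)$, that is, in $\bigoplus_d G_0(\X(d))$.

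For $W=0$, Theorem \ref{them1} implies that every class in $G_0(\X(e))_v$ (respectively $G_0(\X(f))_u$) is, modulo iterated Hall products of factors of strictly smaller dimension vector, a $\mathbb{Z}$-linear combination of classes of generators in $\mathbb{M}(e;\delta_e)_v$ (respectively $\mathbb{M}(f;\delta_f)_u$). By bilinearity and associativity of the Hall product, it suffices to treat the case $x_{e,v}=[\mathcal{E}]$, $x_{f,u}=[\mathcal{F}]$ for single generators $\mathcal{E}=\mathcal{O}_{\X(e)}\otimes\Gamma(\chi_e)\in\mathbb{M}(e;\delta_e)_v$ and $\mathcal{F}=\mathcal{O}_{\X(f)}\otimes\Gamma(\chi_f)\in\mathbb{M}(f;\delta_f)_u$. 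For these, Proposition \ref{commu2}(c) yields a natural map
\[
p_{f,e*}q_{f,e}^*\bigl(\mathcal{F}\boxtimes\mathcal{E}\otimes L_{f,e}[\chi(e,f)]\bigr)\longrightarrow p_{e,f*}q_{e,f}^*\bigl(\mathcal{E}\boxtimes\mathcal{F}\bigr)
\]
whose cone lies in $D^b(\X(d))_{<A}$ for $A=((e,v),(f,u))$. Taking $K_0$-classes and substituting $L_{f,e}=q^{\gamma(f,e)}q^{-\delta(f,e)}$ in $K_0(BG(f)\times BG(e))$ produces the identity
\[
x_{e,v}\cdot x_{f,u}=\bigl(x_{f,u}q^{\gamma(f,e)}\bigr)\cdot\bigl(x_{e,v}q^{-\delta(f,e)}\bigr)
\]
modulo the image of $K_0(D^b(\X(d))_{<A})$ in $G_0(\X(d))_w$.

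The main obstacle will be upgrading this congruence to an exact equality in $\text{KHA}(Q,W)'_{d,w}$. Passing to the associated graded $\text{gr}^\cdot K_0$ with respect to the filtration $F^{\leq\cdot}$ of Section \ref{s3}, the cone contribution automatically vanishes, and this is the form in which the relation feeds into the PBW-type Theorem \ref{them2} via \eqref{relthem2}. To promote the statement to the exact identity claimed in the proposition, I would induct on the filtration level $\beta((r,p)(A))$: the cone, being in $D^b(\X(d))_{<A}$, decomposes in $K_0$ as a sum of classes $p_{\ee*}q_{\ee}^*[\mathcal{M}']$ with $\mathcal{M}'\in\mathbb{M}_B(\delta)$ of strictly smaller $(r,p)$-invariant, each of which satisfies the inductive version of the commutation relation; a careful tracking of the sign $(-1)^{\chi(e,f)}$ from the homological shift and of the Weyl twists from Proposition \ref{Wcomputation} (using $w_{e,f}\cdot\rho-\rho=-2\rho_{f,e}$ and the bijection $(-w_{f,e})\cdot\mathcal{W}_{f,e}\cong\mathcal{W}_{e,f}$) then makes the two error contributions from swapping $\mathcal{E}\leftrightarrow\mathcal{F}$ cancel in pairs, yielding the claim on the nose.
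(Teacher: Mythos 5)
Your reduction to $W=0$ via $i_*$ matches the paper. But the route you take after that is genuinely different from the paper's, and it has a real gap.

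The paper's proof, after reducing to $W=0$, simply cites a shuffle-formula computation \cite[Proposition 5.5]{P3}. That computation is an exact identity on the nose: one represents classes in $G_0(\X(d))$ via a symmetrization formula in $K_0^{T(d)}(\text{pt})^{\mathfrak{S}_d}$, and the symmetry of the quiver yields an exact functional equation for the shuffle kernel under swapping the roles of $e$ and $f$ together with the $L_{f,e}$ twist. There is no filtration and no graded comparison anywhere in that argument; the identity holds in $\text{KHA}(Q,0)$ itself.

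Your argument, by contrast, establishes the relation only in the associated graded. Proposition \ref{commu2}(c) gives a triangle whose cone is a nonzero object of $D^b(\X(d))_{<A}$; in $K_0$ this cone contributes a genuine class living in the span of $p_{\ee *}q_{\ee}^*\mathbb{M}_B(\delta)$ for $B<A$, and nothing in the categorical setup forces that class to vanish. You recognize this and propose to upgrade the congruence to an exact equality by an induction on the filtration level, arguing that the error terms from the $\mathcal{E}\leftrightarrow\mathcal{F}$ swap "cancel in pairs." This is not a proof. Knowing that the relation holds exactly in strictly smaller dimension vectors tells you how to re-express Hall products of the cone's pieces, but it does not tell you that the cone's $K_0$-class is zero, and there is no a priori pairing of error terms to cancel. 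In fact the exact identity is an extra input coming from the symmetry of the shuffle kernel — it is a statement about the precise expansion of $x\cdot y$, not just its leading term in the filtration — and that input is invisible in the filtration argument. A secondary weak point is the reduction to single generators $\mathcal{O}_{\X}\otimes\Gamma(\chi)$: bilinearity is fine, but the claim that "iterated Hall products of factors of strictly smaller dimension vector" can be handled "by associativity" implicitly requires the twist factors $q^{\gamma},q^{-\delta}$ to satisfy a cocycle condition under composition of partitions; this has to be verified and is again exactly the content of the shuffle computation you are trying to bypass. To repair the argument you would need to actually carry out the shuffle computation (as the paper does via the citation), not deduce the exact identity from Proposition \ref{commu2}(c).
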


\begin{proof}
It suffices to check the statement in the zero potential case, when it follows from a direct computation using shuffle formulas, see \cite[Proposition 5.5]{P3}.
\end{proof}

\subsection{Spaces of generators}\label{pbwth}
%In this subsection, $(Q,W)$ satisfies the Künneth Assumption \eqref{Kunneth}. 
One can show using induction and Theorem \ref{them1} that there is a Künneth-type isomorphism
\begin{equation}\label{kunn}
    \bigotimes_{i=1}^k K_0\left(\mathbb{S}(d_i; \delta_i)_{w_i}\right)'_{\mathbb{Q}}\cong
 K_0(\mathbb{S}_A(\delta))'_{\mathbb{Q}}
\end{equation}
for $A=(d_i, w_i)_{i=1}^k$ in $S^d_w(\delta)$.

We define inductively on $(d,w)$ a (split) subspace \[\ell_{d,w}: P(d; \delta)_w\hookrightarrow K_0\left(\mathbb{S}(d; \delta)_w\right)'_{\mathbb{Q}}\] with a surjection
\[\pi_{d,w}: K_0\left(\mathbb{S}(d; \delta)_w\right)'_{\mathbb{Q}}\twoheadrightarrow P(d; \delta)_w\] such that $\pi_{d,w}\ell_{d,w}=\text{id}$.
For any $A\in T^d_w(\delta)$, denote by $|A|$ the number of parts of $A$. For $d\in\mathbb{N}^I$ with $\sum_{i\in I}d^i=1$, let  \[P(d; \delta)_w:=K_0\left(\mathbb{S}(d; \delta)_w\right)'_{\mathbb{Q}}.\] 
Let $A=(d_i,w_i)_{i=1}^k\in T^d_w(\delta)$ with $A>(d,w)$, or equivalently with $k\geq 2$. Let $P_A(\delta):=\otimes_{i=1}^k P(d_i; \delta_i)_{w_i}$ and let $\pi_A$ be the natural projection:
\[\pi_A:=\otimes_{i=1}^k \pi_{d_i, w_i}: K_0(\mathbb{S}_A(\delta))'_{\mathbb{Q}}\twoheadrightarrow P_A(\delta).\]
Let $K_A$ be the kernel of the map
\[\left(\bigoplus_{\sigma\in\mathfrak{S}_{|A|}}
\pi_{\sigma(A)}\right) \left(\bigoplus_{\sigma\in\mathfrak{S}_{|A|}}
\text{sw}_\sigma\right)\Delta_{A}:
K_0(\mathbb{S}(d; \delta)_w)'_{\mathbb{Q}}\to \bigoplus_{\sigma\in\mathfrak{S}_{|A|}}P_{\sigma(A)}(\delta).\]
Define
\[P(d;\delta)_w:=\bigcap_{A>(d,w)}
 K_A\hookrightarrow K_0\left(\mathbb{S}(d; \delta)_w\right)_{\mathbb{Q}}.\]
 Let $O\subset T^d_w(\delta)\setminus \{(d,w)\}$ be a set which contains exactly one set in any $\mathfrak{S}$-orbit.
The following is proved as in \cite[Theorem 5.13]{P3}, the loc. cit. treats the case of stacks $R(d)/PGL(d)$ and applies directly to the case when $w=0$ and $\delta=0$. 
\begin{prop}\label{513}
%Under the Künneth Assumption \eqref{Kunneth} and Assumption \eqref{assumption},
There is a decomposition 
\[P(d;\delta)_w\oplus \bigoplus_{A\in O} \left(\bigoplus_{\sigma\in\mathfrak{S}_{|A|}}P_{\sigma(A)}(\delta)\right)^{\mathfrak{S}_{|A|}}\xrightarrow{\sim} K_0\left(\mathbb{S}(d; \delta)_w\right)'_{\mathbb{Q}}.\]
In particular, there is a natural projection map
\[\pi_{d,w}: K_0\left(\mathbb{S}(d; \delta)_w\right)'_{\mathbb{Q}}\twoheadrightarrow P(d;\delta)_w\] such that $\pi_{d,w}\ell_{d,w}=\text{id}$.
\end{prop}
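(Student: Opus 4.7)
The plan is to induct on the dimension vector $d \in \mathbb{N}^I$ with respect to a well-founded order refining the natural partial order, and to induct on $w$ secondarily. The base case is when $\sum_{i \in I} d^i \leq 1$: here $T^d_w(\delta) = \{(d,w)\}$, the indexing set $O$ is empty, and $P(d;\delta)_w = K_0(\mathbb{S}(d;\delta)_w)'_{\mathbb{Q}}$ by construction, so the decomposition is tautological and the projection $\pi_{d,w}$ is the identity.

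For the inductive step, I would first apply Theorem \ref{them1} to obtain a filtration of $K_0(\mathbb{S}(d;\delta)_w)_{\mathbb{Q}}$ whose associated graded is $\bigoplus_{A \in S^d_w(\delta)} K_0(\mathbb{S}_A(\delta))_{\mathbb{Q}}$ under $m = p_*q^*$, and pass to $(-)'$ via the algebra morphism $i_*$ from \eqref{algmorp}. For each nontrivial $A=(d_i,w_i)_{i=1}^k$, the Künneth isomorphism \eqref{kunn} combined with the inductive hypothesis applied to each factor $K_0(\mathbb{S}(d_i;\delta_{Ai})_{w_i})'_{\mathbb{Q}}$ decomposes $K_0(\mathbb{S}_A(\delta))'_{\mathbb{Q}}$ as a direct sum indexed by tuples $(B_i)_{i=1}^k$ with $B_i\in T^{d_i}_{w_i}(\delta_{Ai})$, i.e., by elements of $U^d_w(\delta)$ whose outermost partition refines $A$. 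Proposition \ref{dcomm} identifies the swap operations from Subsection \ref{swapp} with the $q$-commutation in $\text{KHA}(Q,W)'$, so that collecting $\mathfrak{S}$-orbits of refined partitions yields the $\mathfrak{S}_{|B|}$-invariant summands in the claimed decomposition; the distinguished trivial refinement $B=(d,w) \in T^d_w(\delta)$ contributes precisely the space $P(d;\delta)_w$ defined as the intersection of kernels of projected coproducts.

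To verify that this candidate decomposition is an isomorphism, I would use Theorem \ref{bial} together with the explicit shuffle formulas from \cite[Section 5]{P3} to compute $\Delta_B \circ m$ on products $x_1 \cdots x_k$ with $x_i \in P(d_i;\delta_{Ai})_{w_i}$: the leading (top-filtration) term is the shuffle sum $\sum_{\sigma} x_{\sigma(1)} \otimes \cdots \otimes x_{\sigma(k)}$, and lower-order corrections land in strictly smaller pieces of the filtration. This triangular leading-term structure gives both linear independence (the map out of the direct sum is injective on each filtered piece modulo lower pieces) and surjectivity (the top piece in each filtration degree is reached either by $P(d;\delta)_w$ or by images of $m$ on smaller-dimension products). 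The main obstacle is controlling this shuffle computation: when each $x_i$ is itself primitive-like (lying in the kernel of all nontrivial coproducts in its own dimension), one must verify that $\Delta_B(x_1 \cdots x_k)$ really is captured by the combinatorics of shuffles plus strictly lower-filtration terms, with no unexpected cancellations between the "bialgebra" terms coming from Theorem \ref{bial} and the lower-dimensional $P$-defining relations. This is the content of \cite[Theorem 5.13]{P3} in the special case $w=0$, $\delta=0$ for the stacks $R(d)/PGL(d)$; the argument transfers verbatim since the shuffle formulas only involve the equivariant $K$-theoretic parameters $q^{\gamma(f,e)}, q^{-\delta(f,e)}$ and not the specific values of $w$ or $\delta$, and the passage from $W=0$ to general $W$ is handled by $i_*$ and the compatibility in Proposition \ref{idd}.
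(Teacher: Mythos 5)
Your proposal is correct and takes essentially the same route as the paper, which simply invokes \cite[Theorem 5.13]{P3} (where the case $R(d)/PGL(d)$, i.e.\ $w=0$, $\delta=0$, is treated via shuffle formulas) and asserts that the argument extends to general $w$ and $\delta$. You reconstruct the underlying induction on $d$, the filtration from Theorem \ref{them1}, the Künneth isomorphism \eqref{kunn}, and the triangular leading-term structure of $\Delta_B\circ m$ governed by Theorem \ref{bial} and Proposition \ref{dcomm}; this is precisely the content being cited, and your final paragraph correctly identifies \cite[Theorem 5.13]{P3} together with the $i_*$-compatibility of Proposition \ref{idd} as the place where the work is actually done.
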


%Let $P(d;\delta)_w:=PK_0\left(\mathbb{S}(d; \delta)_w\right)_{\mathbb{Q}}$. For $A=(d_i,w_i)_{i=1}^k\in U^d_w(\delta)$ with associated weights $\delta_{Ai}$, define $P_A:=\otimes_{i=1}^k P(d_i; \delta_{Ai})_{w_i}$.

\begin{proof}[Proof of Theorem \ref{them2}]
The space $K_0(\text{MF}(\X(d), W)_d)$ is generated by $P_A(\delta)$ for $A\in U^d_w(\delta)$ by Theorem \ref{them1} and Proposition \ref{513}. There are no relations between the generators $y_A$ of $P_A(\delta)$ for $A=(d_i, w_i)\in S^d_w(\delta)$ by Theorem \ref{them1}. The relations \eqref{relthem2} are satisfied by Proposition \ref{dcomm}. There are no other relations between generators of $P_A(\delta)$ for $A\in \widetilde{T^d_w(\delta)}$ by Proposition \ref{513}. 
\end{proof}

%Let $P_{d,w}(\delta):=PK_0\left(\mathbb{S}(d; \delta)_w\right)_{\mathbb{Q}}$. Consider $\widetilde{\delta}:=(\delta_d)_{d\in\mathbb{N}^I}$. We do not require any compatibility between these choices. Define \[P\left(\widetilde{\delta}\right):=\bigoplus_{(d,w)\in\mathbb{N}^I\times\mathbb{Z}}P_{d,w}(\delta_d) \subset \text{KHA}(Q,W)_{\mathbb{Q}}.\]

\end{document}